\documentclass[12pt,a4paper]{amsart}
\usepackage{graphicx}
\usepackage[ansinew]{inputenc}    % accents i altres
\usepackage[all]{xy}

\usepackage{amssymb,amscd,empheq}
\usepackage{amsmath,amsfonts}
\usepackage{amsthm}

\usepackage[toc,titletoc]{appendix}

\usepackage[normalem]{ulem} 

\usepackage{hyperref}

\usepackage{tikz}

\setlength{\topmargin}{-1.0cm} \setlength{\textwidth}{16cm}
\setlength{\textheight}{23.5cm} \setlength{\oddsidemargin}{0cm}
\setlength{\evensidemargin}{0cm} \setlength{\footskip}{1cm}
\setlength{\headsep}{0.8cm}

\newtheorem{theorem}{Theorem}[section]
\newtheorem{definition}[theorem]{Definition}
\newtheorem{proposition}[theorem]{Proposition}
\newtheorem{lemma}[theorem]{Lemma}
\newtheorem{corollary}[theorem]{Corollary}
\newtheorem{remark}[theorem]{Remark}

\def\C{\mathbb C}
\def\R{\mathbb R}

\def\N{\mathbb N}

 %quadradinho no final da prova
%\documentclass [a4paper,12pt]{amsart}
\usepackage{graphicx}
\usepackage[ansinew]{inputenc}    % accents i altres
\usepackage[all]{xy}
\usepackage{amssymb,amscd}
\usepackage{color}

\setlength{\topmargin}{-1.0cm} \setlength{\textwidth}{16cm}
\setlength{\textheight}{23.5cm} \setlength{\oddsidemargin}{0cm}
\setlength{\evensidemargin}{0cm} \setlength{\footskip}{1cm}
\setlength{\headsep}{0.8cm}

\numberwithin{equation}{section}

%\title[A non-autonomous Klein-Gordon-Zakharov system]{A non-autonomous Klein-Gordon-Zakharov system: existence, regularity and upper semicontinuity of the pullback attractor}

\title[Lower semicontinuity of attractors for a coupled system]{Lower semicontinuity of pullback attractors for a non-autonomous coupled system of strongly damped wave equations}

\author[E. M. Bonotto]{Everaldo M. Bonotto$^{\dag}$}\thanks{$\dag$Research partially
supported by FAPESP \#2020/14075-6
 and CNPq  \#  310540/2019-4}
\address[E. M. Bonotto]{Instituto de Ci\^encias Matem\'{a}ticas e de Computa\c{c}\~ao, Universidade de S\~{a}o Paulo, Campus de S\~{a}o Carlos, Caixa Postal 668, 13560-970, S\~{a}o Carlos SP, Brazil.}
\email{ebonotto@icmc.usp.br}

\author[A. N. Carvalho]{Alexandre N. Carvalho$^{\dag\dag}$}\thanks{$\dag\dag$Research partially
supported by FAPESP \#2020/14075-6}
% and CNPq  \#  XXXXXX} 
\address[A. N. Carvalho]{Instituto de Ci\^encias Matem\'{a}ticas e de Computa\c{c}\~ao, Universidade de S\~{a}o Paulo, Campus de S\~{a}o Carlos, Caixa Postal 668, 13560-970, S\~{a}o Carlos SP, Brazil.}
\email{andcarva@icmc.usp.br}

\author[M. J. D. Nascimento]{Marcelo J. D. Nascimento$^\star$}\thanks{$^\star$Research partially supported by CNPq \# 302743/2022-7 and FAPESP \# 2020/14075-6.}

\address[M. J. D. Nascimento]{Universidade Federal de S\~{a}o
Carlos, Departamento de Matem\'atica, 13565-905, S\~{a}o
Carlos SP, Brazil.}
\email{marcelojdn@ufscar.br}

\author[E. B. Santiago]{Eric B. Santiago} 
\address[E. B. Santiago]{Instituto de Ci\^encias Matem\'{a}ticas e de Computa\c{c}\~ao, Universidade de S\~{a}o Paulo, Campus de S\~{a}o Carlos, Caixa Postal 668, 13560-970, S\~{a}o Carlos SP, Brazil.}
\email{ericbsantiago@icmc.usp.br}

\date{}

\begin{document}

\begin{abstract}
The aim of this paper is to study the robustness of the family of pullback attractors associated to a non-autonomous coupled system of strongly damped wave equations given by the following evolution system 
\[
\begin{cases}
u_{tt} - \Delta u + u + \eta(-\Delta)^{1\slash 2}u_t + a_{\epsilon}(t)(-\Delta)^{1\slash 2}v_t = f(u), & (x, t) \in\Omega\times (\tau, \infty), \\
v_{tt} - \Delta v + \eta(-\Delta)^{1\slash 2}v_t - a_{\epsilon}(t)(-\Delta)^{1\slash 2}u_t = 0, & (x, t) \in\Omega\times (\tau, \infty),
\end{cases}
\]
subject to boundary conditions
\[
u = v = 0, \;  (x, t) \in\partial\Omega\times (\tau, \infty), 
\]
and initial conditions
\[
u(\tau, x) = u_0(x), \ u_t(\tau, x) = u_1(x), \ v(\tau, x) = v_0(x), \ v_t(\tau, x) = v_1(x), \ x \in \Omega, \ \tau\in\mathbb{R}, 
\]
where $\Omega$ is a bounded smooth domain in $\mathbb{R}^N$, $N \geq 3$, with the boundary $\partial\Omega$ assumed to be regular enough, $\eta > 0$ is a constant, $a_{\epsilon}$ is a H\"{o}lder continuous function satisfying uniform boundedness conditions, and $f\in C^1(\mathbb{R})$ is a dissipative nonlinearity with subcritical growth. 
This problem is a modified version of the well known Klein-Gordon-Zakharov system. 
Under suitable hyperbolicity conditions, we obtain the gradient-like structure of the limit pullback attractor associated with this evolution system, and we prove the continuity of the family of pullback attractors at $\epsilon = 0$.

\vskip .1 in \noindent {\it Mathematics Subject Classification 2020}: Primary: 35B41, 37B55. Secondary: 35B40, 35K40. 
\newline {\it Keywords and phrases:} Non-autonomous coupled system, wave equations, pullback attractors, exponential dichotomy, lower semicontinuity. 

% 35B41 - Attractors
% 35B40 - Asymptotic behavior of solutions to PDEs
% 35B65 - Smoothness and regularity of solutions to PDEs
% 35K40 - Seconder-order parabolic systems
% 35K58 - Semilinear parabolic equations
% 37B55 - Topological dynamics of non-autonomous systems

%37L05  	General theory of infinite-dimensional dissipative dynamical systems, nonlinear semigroups, evolution equations
%37L30  	Infinite-dimensional dissipative dynamical systems?attractors and their dimensions, Lyapunov exponents

%37B55  	Topological dynamics of nonautonomous systems
%34D10  	Perturbations of ordinary differential equations
\end{abstract}

\maketitle

\section{Introduction}

The continuity of attractors with respect to perturbations is a growing subject that has called the attention of many mathematicians over the years. 
In the literature, the study of the robustness of attractors is usually divided into two main features:  \textit{upper semicontinuity} and  \textit{lower semicontinuity}. 

Roughly speaking, upper semicontinuity ensures that the unperturbed attractor remains stable when the perturbation is well-behaved. On the other hand, lower semicontinuity means that the unperturbed attractor does not undergo a collapse or degeneration. The first property is typically expected due to the convergence of the perturbed dynamical system under a small perturbation. However, obtaining a result on lower semicontinuity is considerably more challenging, as it necessitates specific structural assumptions about the dynamics within the unperturbed attractor. Moreover, characterizing the structure of attractors is a crucial step towards achieving a result on lower semicontinuity.

In the case of an autonomous dynamical system (nonlinear semigroup) subject to an autonomous perturbation, only a structural assumption for the limit dynamical system is necessary. This means that only the limit global attractor needs to be gradient, and all the equilibrium points of the limit semigroup are hyperbolic. For further details, refer to  \cite{Hale and Raugel}, where the authors established an abstract result and also explored applications to partial differential equations. In other words, no additional characterization was needed for the perturbed problem.
 
Later, in \cite{Carvalho and Langa}, the authors explored a scenario in which the perturbed dynamical system is non-autonomous, while the associated limit dynamical system is autonomous. In this context, the attractor is defined as the union of unstable manifolds around hyperbolic equilibrium points. Furthermore, through the introduction of a non-autonomous term into an autonomous equation, they proved how the concept of a hyperbolic equilibrium point (in the autonomous case) turns to the concept of a hyperbolic global solution for evolution processes (within the non-autonomous framework). Additionally, they established the continuity of local unstable and stable manifolds in relation to perturbations.

For non-autonomous problems under regular perturbation, the work \cite{Carvalho Ergodic}  focuses on the property of exponential dichotomy for non-autonomous linear equations. It provides more general results that enable the treatment of cases where the unperturbed pullback attractor is given as the closure of the union of unstable manifolds around an infinite collection of hyperbolic time-dependent global solutions. 
 
Regarding applications of the abstract results in the literature, we refer to \cite{Rivero1}, where the authors obtained the continuity of pullback attractors at $\epsilon = 0$ for a non-autonomous strongly damped wave equation, given by the following formulation 
\[
u_{tt} - \Delta u - \gamma(t) \Delta u_t + \beta_{\epsilon}(t) u_t = f(u),
\]
in a bounded domain $\Omega\subset \mathbb{R}^N$, where the time-dependent terms $\gamma(t)$ and $\beta_{\epsilon}(t)$ are continuously differentiable in $\mathbb{R}$ satisfying some uniform boundedness conditions, and the nonlinear term $f$ has appropriate growth restrictions.

Meanwhile, in \cite{Parreira}, the author studied the lower semicontinuity of the family of pullback attractors for the following singularly non-autonomous plate equation 
\[
\begin{cases}
u_{tt} + a_{\epsilon}(t, x)u_t + (-\Delta)u_t + (-\Delta)^2u + \lambda u = f(u), & \mbox{in} \quad \Omega, \\
u = \Delta u = 0, & \mbox{on} \quad \partial\Omega, 
\end{cases}
\]
with respect to the damping coefficient $a_{\epsilon}$, where $\Omega$ is a bounded domain in $\mathbb{R}^N$, $\lambda >0$, and $f$ is a suitable nonlinearity. This type of problem is related to models of vibrations in elastic systems.

In this paper, we aim to investigate the robustness of the family of pullback attractors associated with a non-autonomous coupled system of strongly damped wave equations, as examined in \cite{BNS}. This system is defined by the following evolution equations:
\begin{equation}\label{intro system}
\begin{cases}
u_{tt} - \Delta u + u + \eta(-\Delta)^{1\slash 2}u_t + a_{\epsilon}(t)(-\Delta)^{1\slash 2}v_t = f(u), & (x, t) \in\Omega\times (\tau, \infty), \\
v_{tt} - \Delta v + \eta(-\Delta)^{1\slash 2}v_t - a_{\epsilon}(t)(-\Delta)^{1\slash 2}u_t = 0, & (x, t) \in\Omega\times (\tau, \infty), \\
u = v = 0, & (x, t) \in\partial\Omega\times (\tau, \infty), 
\end{cases}
\end{equation}
with initial conditions
\[
u(\tau, x) = u_0(x), \ u_t(\tau, x) = u_1(x), \ v(\tau, x) = v_0(x), \ v_t(\tau, x) = v_1(x), \ x \in \Omega, \ \tau\in\mathbb{R}, 
\]
where $\Omega$ is a bounded smooth domain in $\mathbb{R}^N$, $N \geq 3$, with the boundary $\partial\Omega$ assumed to be regular enough, $\eta > 0$ is a constant, $a_{\epsilon}$ is a H\"{o}lder continuous function satisfying uniform boundedness conditions, and $f\in C^1(\mathbb{R})$ is a dissipative nonlinearity with a subcritical growth restriction. 

The problem \eqref{intro system} is a non-autonomous modified version of the so called \textit{Klein-Gordon-Zakharov} (KGZ) system. 
In the autonomous case with space dimension $N=3$, it is well known that the KGZ system arises to describe the interaction between a Langmuir wave and an ion acoustic wave in a plasma, see \cite{Bellan, Dency, OT} and the references therein. 

In \cite{BNS}, the authors used the theory of uniform sectorial operators to establish the local and global well-posedness of the problem \eqref{intro system} in the product space $H_0^1(\Omega) \times L^2(\Omega) \times H_0^1(\Omega) \times L^2(\Omega)$. 
By proving that the nonlinear evolution process, associated to the global solution of \eqref{intro system}, is a compact map, it was established the existence of pullback attractors for the problem \eqref{intro system}. 
Using energy estimates and progressive increases of regularity, the authors were able to improve the regularity of the pullback attractors. Lastly, under a suitable assumption on the functional parameter $a_{\epsilon}(t)$, they obtained the upper semicontinuity of the family of pullback attractors at $\epsilon = 0$.

In this work, we study the limit system ($\epsilon = 0$) associated with \eqref{intro system} in order to obtain a characterization for the unperturbed pullback attractor in terms of the unstable manifolds around equilibrium points, assuming that all of them are hyperbolic for the limit problem. With the aid of a Lyapunov Functional, we prove that any bounded global solution in the limit pullback attractor is forward and backward asymptotic to equilibria. 
We also study the linearized system associated with \eqref{intro system} in order to prove a result on exponential dichotomy under perturbation, see Proposition \ref{convergence processes linearization} and Corollary \ref{all hyperbolic}. 
Moreover, thanks to abstract general results on lower semicontinuity from \cite{Rivero1, Carvalho and Langa, Carvalho Ergodic}, we obtain the lower semicontinuity of pullback attractors at $\epsilon = 0$. Consequently, we achieve the continuity of the family of pullback attractors at $\epsilon = 0$, concluding in this way our study of robustness for the problem \eqref{intro system}. 

 It is important to mention that, in our system \eqref{intro system}, the coupling terms 
\[
a_{\epsilon}(t)(-\Delta)^{1\slash 2}v_t \quad \mbox{and} \quad 
- a_{\epsilon}(t)(-\Delta)^{1\slash 2}u_t
\]
have the same order, in the sense of fractional powers, as well as the strong damping terms $\eta(-\Delta)^{1\slash 2}u_t$ and 
$\eta(-\Delta)^{1\slash 2}v_t$. 
This scenario generated a situation where we were not able to apply \cite[Theorem 7.6.11]{Henry} directly in order to obtain our result on exponential dichotomy for the problem \eqref{intro system}. 
For more details, see Section \ref{sub exponential dichotomy}, in particular Lemma \ref{conv semigroups in Y0 Y1}, Proposition \ref{convergence processes linearization} and Corollary \ref{all hyperbolic}, where we had to carry out an analysis that is not straightforward, and then we apply \cite[Theorem 7.6.10]{Henry} to conclude our proof on exponential dichotomy. In fact, there were several difficulties that we had to overcome, and to the best of our knowledge, the analysis presented in this work is novel up to the present moment. The reader may compare our results with those presented in \cite{Rivero1} and \cite{Parreira}, for instance. It is worth mentioning that if we replace the coupling terms $a_{\epsilon}(t)(-\Delta)^{1\slash 2}v_t$ and 
$- a_{\epsilon}(t)(-\Delta)^{1\slash 2}u_t$ in system \eqref{intro system}, respectively, by 
$a_{\epsilon}(t)v_t$ and $- a_{\epsilon}(t)u_t$, then one may proceed exactly as in \cite{Rivero1} in order to obtain the continuity of the family of pullback attractors at $\epsilon = 0$.

In what follows, we describe the outline of this paper. 

\begin{enumerate}

\item[$\bullet$] Section \ref{Abstract setting} is devoted to formulate the non-autonomous problem \eqref{intro system} in an abstract form  within the framework of nonlinear evolution processes. We present sufficient assumptions that ensure global well-posedness and the existence of pullback attractors for the problem \eqref{intro system} in an appropriate energy space. Additionally, we recall some auxiliary results established in \cite{BNS}. 

\item[$\bullet$] Section \ref{limit problem} deals with the structure of the limit problem (that is, with parameter $\epsilon = 0$) associated to \eqref{intro system}. In Proposition \ref{prop de Lyapunov}, we exhibit a Lyapunov functional associated with system \eqref{intro system}. In Theorem \ref{structure limit PA}, we establish the gradient-like structure of the limit pullback attractor, that is, it can be written as the union of the unstable manifolds around equilibria. 

\item[$\bullet$] Section \ref{linearized system} encompasses a spectral analysis of the matrix operator associated to the linearization of problem \eqref{intro system} around an equilibrium point. Here, we prove a continuity result for the inverse operator concerning the parameter $\epsilon$ (see Proposition \ref{convergence inverses}). Furthermore, we derive sectorial estimates that are uniform in both time and parameter $\epsilon$. Using these uniform estimates, we prove the convergence of linear semigroups, uniformly on compact subsets, as $\epsilon$ approaches to zero (see Proposition \ref{convergence linear semigroups}).

\item[$\bullet$] Section \ref{sub exponential dichotomy} presents the results regarding the robustness of problem \eqref{intro system}. Through a combination of arguments and results from \cite{Nascimento} and \cite{Henry}, we establish a result on exponential dichotomy under perturbation (with respect to parameter $\epsilon$), see Proposition \ref{convergence processes linearization} and Corollary \ref{all hyperbolic}. The lower semicontinuity of the family of pullback attractors at $\epsilon = 0$ is obtained in Theorem \ref{lower PA}.

\item[$\bullet$] Appendix \ref{appendix A} is dedicated to presenting a collection of preliminary facts that are useful for the development of the article. We provide definitions and results concerning the theory of nonlinear evolution processes and pullback attractors.

\item[$\bullet$] Appendix \ref{appendix B} addresses an abstract result on lower semicontinuity of pullback attractors which will be used in the main result.

\end{enumerate}

%\section{Preliminaries}\label{Preli}

\section{Abstract setting of the problem}\label{Abstract setting}

Consider the following initial boundary value problem 
\begin{equation}\label{edp01}
\begin{cases}
u_{tt} - \Delta u + u + \eta(-\Delta)^{1\slash 2}u_t + a_{\epsilon}(t)(-\Delta)^{1\slash 2}v_t = f(u), & (x, t) \in\Omega\times (\tau, \infty), \\
v_{tt} - \Delta v + \eta(-\Delta)^{1\slash 2}v_t - a_{\epsilon}(t)(-\Delta)^{1\slash 2}u_t = 0, & (x, t) \in\Omega\times (\tau, \infty),
\end{cases}
\end{equation}
subject to boundary conditions
\begin{equation*}%\label{boundary condition}
u = v = 0, \;  (x, t) \in\partial\Omega\times (\tau, \infty), 
\end{equation*}
and initial conditions
\begin{equation}\label{cond01}
u(\tau, x) = u_0(x), \ u_t(\tau, x) = u_1(x), \ v(\tau, x) = v_0(x), \ v_t(\tau, x) = v_1(x), \ x \in \Omega, \ \tau\in\mathbb{R}.
\end{equation}

In what follows, we recall the sufficient conditions that ensure the global well-posedness of the non-autonomous problem $(\ref{edp01})$-$(\ref{cond01})$, which is established in \cite{BNS}. 

At first place, the non-autonomous problem $(\ref{edp01})$-$(\ref{cond01})$ can be formulated in a nonlinear evolution process setting by introducing the following notations. Let $X = L^2(\Omega)$ and denote by $A\colon  D(A)\subset X\to X$ the negative Laplacian operator with the Dirichlet boundary conditions, that is, $Au = (-\Delta)u$ for all $u \in D(A)$, where $D(A) = H^2(\Omega) \cap H_0^1(\Omega)$. It is well-known that $A$ is a positive self-adjoint operator in $X$ with compact resolvent and, following Henry \cite{Henry}, $A$ is a sectorial operator in $X$. Thus, $-A$ generates a compact analytic semigroup on $X$. Now, denote by $X^{\alpha}$, $\alpha > 0$, the fractional power spaces associated with the operator $A$, that is, $X^{\alpha} = D(A^{\alpha})$ endowed with the graph norm. With this notation, we have $X^{-\alpha} = (X^{\alpha})^{\prime}$ for all $\alpha>0$, see \cite{Amann}.

With this setting, the non-autonomous problem $(\ref{edp01})$-$(\ref{cond01})$ can be rewritten as an ordinary differential equation in the following abstract form
\begin{equation}\label{edp abstrata} 
\begin{cases}
W_t + \mathcal{A}_{\epsilon}(t)W = F(W), & \ t > \tau, \\
W(\tau) = W_0, & \ \tau \in \mathbb{R},
\end{cases}
\end{equation}
where $W = W(t)$, for all $t \in \mathbb{R}$, and $W_0 = W(\tau)$ are respectively given by
\begin{equation*}
W = \begin{bmatrix} u\\ u_t\\ v\\ v_t\\ \end{bmatrix} \ \text{and} \ \ W_0 = \begin{bmatrix} u_0\\ u_1\\ v_0\\ v_1\\ \end{bmatrix},
\end{equation*}
and, for each $t\in\mathbb{R}$, the unbounded linear operator $\mathcal{A}(t)\colon D(\mathcal{A}(t)) \subset Y_0 \to Y_0$ is defined by
\begin{equation}\label{abstract operator}
\begin{split}
\mathcal{A}_{\epsilon}(t)\!\! \begin{bmatrix} u\\ v\\ w\\ z\\ \end{bmatrix} \!\!&=\!\!
\begin{bmatrix}
0     & -I                           & 0 & 0 \\
A+I & \eta A^{1\slash 2} & 0 & a_{\epsilon}(t) A^{1\slash 2} \\
0     & 0                            & 0 & -I \\
0     & - a_{\epsilon}(t) A^{1\slash 2} & A & \eta A^{1\slash 2}
\end{bmatrix}
\!\!
\begin{bmatrix} u\\ v\\ w\\ z\\ \end{bmatrix} \\
%\!\!&=\!\!
%\begin{bmatrix}
%-v\\ (A+I)u + \eta A^{1\slash 2}v + a_{\epsilon}(t)A^{1\slash 2}z\\ -z\\ -a_{\epsilon}(t)A^{1\slash 2}v + Aw + \eta A^{1\slash 2}z\\ 
%\end{bmatrix}
\end{split}
\end{equation}
for each $\begin{bmatrix} u & v & w & z\end{bmatrix}^T$ in the domain $D(\mathcal{A}_{\epsilon}(t))$ defined by
\begin{equation}\label{domain abstract operator}
D(\mathcal{A}_{\epsilon}(t)) = X^1 \times X^{1\slash 2} \times X^1 \times X^{1\slash 2},
\end{equation}
where $Y_0 = X^{1\slash 2} \times X \times X^{1\slash 2} \times X$ 
is the phase space of the problem $(\ref{edp01})$-$(\ref{cond01})$. The nonlinearity $F$ is given by
\begin{equation}\label{nonlinearity F}
F(W) = \begin{bmatrix} 0\\ f^e(u)\\ 0\\ 0\\ \end{bmatrix},
\end{equation}
where $f^e(u)$ is the Nemitski\u i operator associated with $f(u)$, that is,
$f^e(u)(x) = f(u(x)),$ for all $x\in\Omega.$

In order to ensure the global well-posedness of the problem $(\ref{edp01})$-$(\ref{cond01})$ in $Y_0$, some suitable assumptions on the functional parameter $a_{\epsilon}$ and on the nonlinearity $f$ are needed. Assume that the function $a_{\epsilon}\colon  \mathbb{R} \to (0, \infty)$ is continuously differentiable in $\mathbb{R}$ and satisfies the following condition:
\begin{equation}\label{function a is bounded}
0 < a_0 \leq a_{\epsilon}(t) \leq a_1,
\end{equation}
for all $\epsilon \in [0,1]$ and $t\in \mathbb{R}$, with positive constants $a_0$ and $a_1$, and we also assume that the first derivative of $a_{\epsilon}$ is uniformly bounded in $t$ and $\epsilon$, that is, there exists a constant $b_0 > 0$ such that
\begin{equation*}%\label{derivative-a-bounded}
\vert a_{\epsilon}^{\prime}(t)\vert \leq b_0 \quad \mbox{for all} \quad t\in\mathbb{R}, \ \epsilon\in [0, 1].
\end{equation*}
The functional parameter $a_{\epsilon}$ will be assumed to be $(\beta, C)$-H\"{o}lder continuous for each $\epsilon \in [0,1]$, that is,
\begin{equation}\label{hol-a}
\vert a_{\epsilon}(t) - a_{\epsilon}(s)\vert\leq C\vert t-s\vert^{\beta},
\end{equation}
for all $t, s\in \mathbb{R}$ and $\epsilon \in [0, 1],$ where the constants $C > 0$ and $0 < \beta \leq 1$ are both independent of the time $t\in\mathbb{R}$ and also of the parameter $\epsilon$. Additionally, we will assume that
\begin{equation}\label{norm function a}
\Vert a_{\epsilon} - a_0 \Vert_{L^{\infty}(\mathbb{R})} \rightarrow 0 \quad \mbox{as} \quad \epsilon\rightarrow 0^{+}.
\end{equation}

Moreover, suppose that $f\in C^1(\mathbb{R})$ and it satisfies the dissipativeness condition
\begin{equation}\label{dissipativeness}
\limsup\limits_{\vert s\vert \to\infty} \frac{f(s)}{s} \leq 0,
\end{equation}
and also satisfies the subcritical growth condition
\begin{equation}\label{Gcondition}
\vert f^{\prime}(s)\vert \leq c(1 + \vert s\vert^{\rho - 1}), \quad \text{for all} \;\; s\in \mathbb{R},
\end{equation}
where $1 < \rho < \frac{N+2}{N-2}$, with $N \geq 3$, and $c>0$ is a constant. For a detailed study of equations of type \eqref{edp abstrata} in the parabolic case, we refer to \cite{Nascimento,Sobo}.

 Now, let $\alpha\in (0, 1)$ be such that 
\begin{equation}\label{alpha condition}
1 < \rho < \frac{N + 2(1-\alpha)}{N-2},
\end{equation}
 with $N \geq 3$. Recall that $Y_{\alpha - 1} = [Y_{-1}, Y_0]_{\alpha} $, where
$$
Y_{-1} = X \times X^{-1\slash 2} \times X \times X^{-1\slash 2}
$$
is the extrapolation space of $Y_0$, and $[\cdot, \cdot]_{\alpha}$ denotes the complex interpolation functor (see \cite{Triebel}).

Under the previous conditions, the following result on global well-posedness holds, see \cite[Theorem 2.1]{BNS}. 

\begin{theorem}\label{global-sol} \rm \textbf{[Global Well-Posedness]} \it \, 
Let $1 < \rho < \frac{N + 2}{N-2}$, $N\geq 3$, $\alpha \in (0, 1)$ satisfying condition \eqref{alpha condition}, and let $f \in C^1(\mathbb{R})$ be a function satisfying \eqref{dissipativeness}-\eqref{Gcondition}. 
Assume that conditions \eqref{function a is bounded}-\eqref{hol-a} hold and let $F\colon Y_0 \to Y_{\alpha - 1}$ be defined as in \eqref{nonlinearity F}. Then for any initial data $W_0 \in Y_0$ the problem \eqref{edp abstrata} has a unique global solution $W(t)$ such that $W(t) \in C([\tau,\infty), Y_0).$ 
Moreover, such solutions are continuous with respect to the initial data on $Y_0$.
\end{theorem}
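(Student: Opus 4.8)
The plan is to regard \eqref{edp abstrata} as an abstract semilinear non-autonomous parabolic problem and combine the variation-of-constants formula with a contraction argument for the local theory, then use a global energy estimate to rule out finite-time blow-up. Concretely I would proceed in five steps: (i) establish that the family $\{\mathcal{A}_\epsilon(t)\}_{t\in\mathbb{R}}$ is uniformly sectorial on $Y_0$, with $t$-independent domain $D(\mathcal{A}_\epsilon(t))=X^1\times X^{1/2}\times X^1\times X^{1/2}$ and Hölder-continuous dependence on $t$, so that it generates a linear evolution process $L_\epsilon(t,\tau)$ enjoying the parabolic smoothing estimate $\|L_\epsilon(t,\tau)\|_{\mathcal{L}(Y_{\alpha-1},Y_0)}\le M(t-\tau)^{-(1-\alpha)}e^{\omega(t-\tau)}$; (ii) verify that the Nemitski\u{i} nonlinearity $F$ from \eqref{nonlinearity F} maps $Y_0$ into $Y_{\alpha-1}$ and is Lipschitz on bounded sets; (iii) solve the integral equation by a contraction mapping to obtain a unique local mild solution; (iv) derive an a priori $Y_0$-bound from the dissipativeness \eqref{dissipativeness} and continue the solution globally; and (v) obtain continuous dependence on $W_0$ by a singular Gronwall estimate.

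For step (i), since the damping $\eta A^{1/2}$ and the coupling $a_\epsilon(t)A^{1/2}$ in \eqref{abstract operator} both act at the fractional order $A^{1/2}$, I would compute the resolvent $(\lambda+\mathcal{A}_\epsilon(t))^{-1}$ block-by-block and estimate it on a sector $\{\lambda:|\arg\lambda|\le \pi/2+\delta\}$. As $A$ is self-adjoint with compact resolvent, diagonalizing in its eigenbasis reduces the resolvent bound to a $4\times 4$ matrix problem parametrized by the eigenvalue $\mu_k$ of $A$; the sectorial estimate then amounts to a bound uniform in $k$, in $t\in\mathbb{R}$ and in $\epsilon\in[0,1]$, for which the uniform bounds \eqref{function a is bounded} on $a_\epsilon$ are exactly what is needed. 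The Hölder continuity in $t$ required by the Tanabe--Sobolevski\u{i} construction of $L_\epsilon(t,\tau)$ is inherited from \eqref{hol-a}.

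For step (ii), I would first identify $Y_{\alpha-1}=[Y_{-1},Y_0]_\alpha = X^{\alpha/2}\times X^{(\alpha-1)/2}\times X^{\alpha/2}\times X^{(\alpha-1)/2}$, so that $F(W)\in Y_{\alpha-1}$ reduces to the map $u\mapsto f^e(u)$ sending $X^{1/2}=H^1_0(\Omega)$ into $X^{(\alpha-1)/2}=X^{-(1-\alpha)/2}$. Using $H^1_0(\Omega)\hookrightarrow L^{2N/(N-2)}(\Omega)$, the growth bound \eqref{Gcondition}, and the duality $X^{-(1-\alpha)/2}=(X^{(1-\alpha)/2})'$, the hypothesis \eqref{alpha condition} relating $\rho$ and $\alpha$ is precisely what makes the Hölder-conjugate exponents admissible and yields the local Lipschitz estimate. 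Matching the subcritical exponent to the fractional smoothing gap $1-\alpha$ is the delicate algebraic core of this step.

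Steps (iii) and (v) are then routine: since $1-\alpha<1$, the integral $\int_\tau^t(t-s)^{-(1-\alpha)}\,ds$ is finite, so $W\mapsto L_\epsilon(\cdot,\tau)W_0+\int_\tau^{\cdot}L_\epsilon(\cdot,s)F(W(s))\,ds$ is a contraction on a small ball of $C([\tau,\tau+T_0],Y_0)$, and continuous dependence follows from a singular Gronwall inequality applied to the difference of two solutions. The step I expect to be the genuine obstacle is (iv): to pass from local to global solutions I must bound the $Y_0$-norm on finite intervals. Here I would test the two equations with $u_t$ and $v_t$ to form the natural energy $E(t)=\tfrac12\big(\|u_t\|^2+\|v_t\|^2+\|A^{1/2}u\|^2+\|u\|^2+\|A^{1/2}v\|^2\big)-\int_\Omega\Phi(u)\,dx$, where $\Phi(s)=\int_0^s f(r)\,dr$; the key structural point is that the skew-symmetric coupling $+a_\epsilon A^{1/2}v_t$ against $-a_\epsilon A^{1/2}u_t$ cancels in the energy identity by self-adjointness of $A^{1/2}$, the damping contributes the dissipative terms $-\eta(\|A^{1/4}u_t\|^2+\|A^{1/4}v_t\|^2)\le 0$, and the dissipativeness \eqref{dissipativeness} guarantees that $-\int_\Omega\Phi(u)\,dx$ is bounded below so that the resulting control of $E$ is coercive in the $Y_0$-norm. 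This yields an a priori bound, uniform in $\epsilon$, precluding finite-time blow-up and giving the global solution.
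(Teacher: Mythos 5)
Your proposal follows essentially the same route as the paper's proof (which is carried out in \cite{BNS} within the singularly non-autonomous parabolic framework of \cite{Nascimento,Sobo}, and whose ingredients are recalled here: the uniform sectorial estimate of Lemma \ref{Lemma 3.2 aux AL1}, the local Lipschitz bound $\|F(z_1)-F(z_2)\|_{Y_{\alpha-1}}\leq c\|z_1-z_2\|_{Y_0}(1+\|z_1\|_{Y_0}^{\rho-1}+\|z_2\|_{Y_0}^{\rho-1})$ together with the smoothing estimate $\|L(t,s)\|_{\mathcal{L}(Y_{\alpha-1},Y_0)}\leq c(\alpha)(t-s)^{\alpha-1}$ used in Proposition \ref{Prop 3.5 rel}, and the energy identity with the skew-symmetric cancellation of the coupling terms as in Proposition \ref{prop de Lyapunov}). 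The only minor divergence is in how uniform sectoriality is established: \cite{BNS} derives it from maximal accretivity via Lumer--Phillips combined with the uniform imaginary-axis resolvent bound \eqref{estimate resolt imaginarium} (as reproduced in Lemma \ref{Lemma 3.2 aux AL1}), whereas you propose diagonalizing in the eigenbasis of $A$, which is equally legitimate here since every block of $\mathcal{A}_{\epsilon}(t)$ is a function of $A$.
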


Since the problem \eqref{edp01}-\eqref{cond01} has a global solution $W(t)$ in $Y_0$, we can define an evolution process $\{S(t, \tau)\colon t\geq\tau\in\mathbb{R}\}$ in $Y_0$ by
\begin{equation}\label{evolution process of the problem}
S(t, \tau)W_0 = W(t) =  (u, u_t, v, v_t), \quad  t\geq\tau\in\mathbb{R}.
\end{equation}

See \cite[Theorem 5.1]{BNS} and \cite[Proposition 5.4]{BNS} for a proof of Theorems \ref{the solution is exponentially dominated} and  \ref{compact S} below.

\begin{theorem}\label{the solution is exponentially dominated}
There exists $R > 0$ such that for any bounded subset $B \subset Y_0$ one can find $t_0(B) > 0$ satisfying
$\Vert S(t, \tau)W_0 \Vert_{Y_0}^2 \leq R$ for all $t \geq  \tau + t_0(B)$ and $W_0 \in B$. 
In particular, the evolution process $\{S(t, \tau)\colon  t\geq\tau\in\mathbb{R}\}$ defined in \eqref{evolution process of the problem} is pullback strongly bounded dissipative.
\end{theorem}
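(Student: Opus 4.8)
The plan is to establish a uniform (in the initial time $\tau$ and over bounded sets $B$) absorbing estimate through an energy method. Writing $F(s)=\int_0^s f(r)\,dr$ and denoting by $(\cdot,\cdot)$ the inner product of $X=L^2(\Omega)$, I would first introduce the natural energy of the system,
\[
E(t)=\tfrac12\|u_t\|^2+\tfrac12\|A^{1/2}u\|^2+\tfrac12\|u\|^2+\tfrac12\|v_t\|^2+\tfrac12\|A^{1/2}v\|^2-\int_\Omega F(u)\,dx,
\]
evaluated along $W(t)=S(t,\tau)W_0=(u,u_t,v,v_t)$. Testing the first equation of \eqref{edp01} with $u_t$ and the second with $v_t$ and adding, the two coupling contributions become $a_\epsilon(t)(A^{1/2}v_t,u_t)-a_\epsilon(t)(A^{1/2}u_t,v_t)$, which cancel because $A^{1/2}$ is self-adjoint; this is the crucial structural feature of the skew-symmetric coupling. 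What remains is the dissipation from the strong damping, giving $\frac{d}{dt}E(t)=-\eta\|A^{1/4}u_t\|^2-\eta\|A^{1/4}v_t\|^2\le 0$.

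Since monotonicity of $E$ alone does not produce an absorbing set, the next step is to perturb $E$ by a cross term that lets the damping control the full $Y_0$-norm. I would set $\mathcal{E}(t)=E(t)+\delta\,\Psi(t)$ with
\[
\Psi(t)=(u_t,u)+(v_t,v)+\tfrac{\eta}{2}\|A^{1/4}u\|^2+\tfrac{\eta}{2}\|A^{1/4}v\|^2,
\]
for a small $\delta>0$. Testing the equations with $u$ and $v$ respectively and differentiating $\Psi$ produces, besides the coercive negative terms $-\delta(\|A^{1/2}u\|^2+\|u\|^2+\|A^{1/2}v\|^2)$, the indefinite contributions $\delta(\|u_t\|^2+\|v_t\|^2)$, the residual coupling $-\delta a_\epsilon(A^{1/2}v_t,u)+\delta a_\epsilon(A^{1/2}u_t,v)$, and the nonlinear term $\delta(f(u),u)$. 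Using the spectral bounds $\|\cdot\|^2\le\lambda_1^{-1/2}\|A^{1/4}\cdot\|^2$ and $\|A^{1/4}\cdot\|^2\le\lambda_1^{-1/2}\|A^{1/2}\cdot\|^2$, the kinetic terms $\delta\|u_t\|^2,\delta\|v_t\|^2$ are absorbed by the damping for $\delta$ small, and the residual coupling is dominated by Young's inequality against $\eta\|A^{1/4}u_t\|^2,\eta\|A^{1/4}v_t\|^2$, all uniformly in $\epsilon$ and $t$ thanks to $a_0\le a_\epsilon(t)\le a_1$ in \eqref{function a is bounded}. The dissipativeness \eqref{dissipativeness} yields, for any $\kappa>0$, a constant $C_\kappa$ with $f(s)s\le\kappa s^2+C_\kappa$ and $F(s)\le\tfrac{\kappa}{2}s^2+C_\kappa$, so that $(f(u),u)$ and $\int_\Omega F(u)\,dx$ are absorbed by the coercive terms (via Poincaré) up to an additive constant; here the subcritical growth \eqref{Gcondition} guarantees that $u\mapsto\int_\Omega F(u)\,dx$ is well defined and continuous on $X^{1/2}=H_0^1(\Omega)$.

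Collecting these estimates gives a differential inequality $\frac{d}{dt}\mathcal{E}(t)\le -c\big(\|u_t\|^2+\|v_t\|^2+\|A^{1/2}u\|^2+\|A^{1/2}v\|^2+\|u\|^2\big)+C$ with $c,C>0$ independent of $\tau$ and $\epsilon$. For $\delta$ small enough one checks that $\mathcal{E}$ is equivalent to $\|W\|_{Y_0}^2$ up to additive constants, so the inequality reads $\frac{d}{dt}\mathcal{E}\le -k\mathcal{E}+C'$ for some $k>0$. Gronwall's lemma then gives $\mathcal{E}(t)\le\mathcal{E}(\tau)e^{-k(t-\tau)}+\frac{C'}{k}$, and translating back to the $Y_0$-norm yields $\|S(t,\tau)W_0\|_{Y_0}^2\le R$ for all $t\ge\tau+t_0(B)$, where $R$ depends only on $C'/k$ and the equivalence constants (hence is independent of $B$), while $t_0(B)$ is determined by $\sup_{W_0\in B}\|W_0\|_{Y_0}$; this is precisely pullback strong bounded dissipativeness. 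The main obstacle is twofold: first, the coupling terms in $\frac{d}{dt}\Psi$ do \emph{not} cancel (unlike those in $\frac{d}{dt}E$) and must be dominated by the $A^{1/4}$-damping with constants uniform in $\epsilon$ and $t$; second, the nonlinearity must be controlled simultaneously through \eqref{dissipativeness} and \eqref{Gcondition}. Finally, the above multiplier computations are a priori formal and would be justified by carrying them out on the strong solutions provided by Theorem \ref{global-sol} and passing to the limit by density.
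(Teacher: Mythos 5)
Your overall strategy -- the energy $E$ in which the skew-symmetric coupling cancels, perturbed by the cross functional $\Psi$ (with the $\tfrac{\eta}{2}\Vert A^{1/4}u\Vert^2$, $\tfrac{\eta}{2}\Vert A^{1/4}v\Vert^2$ terms correctly inserted so that the damping cross products cancel) -- is exactly the standard route for this theorem; the paper itself does not reprove it but cites \cite[Theorem 5.1]{BNS}, whose argument is of this energy type. Your treatment of the residual coupling via Young against the $A^{1/4}$-damping, uniformly in $\epsilon$ through \eqref{function a is bounded}, and of $(f(u),u)$ and the upper bound $F(s)\le \kappa s^2+C_\kappa$ via \eqref{dissipativeness}, are all correct. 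However, there is one genuine gap: the claim that ``for $\delta$ small enough $\mathcal{E}$ is equivalent to $\Vert W\Vert_{Y_0}^2$ up to additive constants'' is false in general, and with it the step $\frac{d}{dt}\mathcal{E}\le -k\mathcal{E}+C'$ collapses. The dissipativeness condition \eqref{dissipativeness} gives only an \emph{upper} quadratic bound $F(s)\le \kappa s^2+C_\kappa$, which yields the lower equivalence $\mathcal{E}\ge \beta\Vert W\Vert_{Y_0}^2-K$; but the \emph{upper} bound on $\mathcal{E}$ requires a quadratic lower bound on $F$, which is not available. From \eqref{Gcondition} one only gets $-\int_\Omega F(u)\,dx\le C\bigl(1+\Vert u\Vert_{X^{1/2}}^{\rho+1}\bigr)$, which is superquadratic since $\rho>1$. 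Concretely, $f(s)=-s^3$ is admissible for $N=3$ (here $\rho=3<5$ and $\limsup f(s)/s\le 0$), and then $-\int_\Omega F(u)\,dx=\tfrac14\Vert u\Vert_{L^4(\Omega)}^4$, which is not dominated by $c\Vert u\Vert_{X^{1/2}}^2+K$; so $\mathcal{E}$ can be superquadratically larger than the quantity $Q$ that your differential inequality dissipates, and linear Gronwall does not apply.

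The gap is repairable without changing your functional. What your computation actually proves is $\frac{d}{dt}\mathcal{E}\le -cQ+C$ with $Q=\Vert u_t\Vert_X^2+\Vert v_t\Vert_X^2+\Vert u\Vert_{X^{1/2}}^2+\Vert u\Vert_X^2+\Vert v\Vert_{X^{1/2}}^2$, together with the pointwise bounds $\beta\Vert W\Vert_{Y_0}^2-K\le \mathcal{E}\le \Phi(Q)$, where $\Phi(q)=\gamma q+C\bigl(1+q^{(\rho+1)/2}\bigr)$ is increasing (the upper bound uses \eqref{Gcondition}). Since $\Phi$ is increasing, $\mathcal{E}\le\Phi(Q)$ gives $Q\ge \Phi^{-1}(\mathcal{E})$, hence $\frac{d}{dt}\mathcal{E}\le -c\,\Phi^{-1}(\mathcal{E})+C$. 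A barrier argument then finishes: whenever $\mathcal{E}(t)>\Phi(2C/c)$ one has $\frac{d}{dt}\mathcal{E}\le -C<0$, so $\mathcal{E}$ cannot cross the threshold upward and enters the interval $[-K,\Phi(2C/c)]$ within time $t_0(B)\le \bigl(\sup_{W_0\in B}\mathcal{E}(\tau)-\Phi(2C/c)\bigr)/C$, which is finite and depends only on $\sup_{W_0\in B}\Vert W_0\Vert_{Y_0}$ (again by the growth bound applied to $\mathcal{E}(\tau)$). The lower equivalence then yields $\Vert S(t,\tau)W_0\Vert_{Y_0}^2\le R:=\bigl(\Phi(2C/c)+K\bigr)/\beta$ for $t\ge\tau+t_0(B)$, with $R$ independent of $B$ and $\tau$, which is precisely the stated absorbing estimate and gives pullback strong bounded dissipativeness. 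Alternatively, the linear Gronwall route can be saved only under an extra structural hypothesis such as $\theta F(s)\le f(s)s+\kappa s^2+C_\kappa$ for some $\theta>0$, which does not follow from \eqref{dissipativeness}--\eqref{Gcondition} and is not assumed in the paper. Your final remark on justifying the formal multipliers by density is appropriate as stated.
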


\begin{theorem}\label{compact S} For each $t > \tau \in \R$, the evolution process $S(t, \tau)\colon Y_0 \to Y_0$ given in \eqref{evolution process of the problem} is  a compact map.
\end{theorem}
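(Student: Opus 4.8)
The plan is to exploit the variation of constants formula together with the smoothing (parabolic) character of the linear process generated by the strongly damped operator $\mathcal{A}_\epsilon(t)$. Let $\{L(t,\tau)\colon t\geq\tau\}$ denote the linear evolution process associated with the family $\{-\mathcal{A}_\epsilon(t)\}$, so that, by Theorem \ref{global-sol}, every solution admits the representation
\[
S(t,\tau)W_0 = L(t,\tau)W_0 + \int_\tau^t L(t,s)\,F(S(s,\tau)W_0)\,ds.
\]
I would prove that each of the two terms on the right-hand side defines a compact map of $Y_0$ into itself for $t>\tau$.

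First I would treat the linear part. Since $A$ has compact resolvent, so does the matrix operator $\mathcal{A}_\epsilon(t)$, and the order-$1\slash 2$ structure of the damping and coupling terms makes $\mathcal{A}_\epsilon(t)$ sectorial (this is the content of the sectorial estimates developed in Section \ref{linearized system}). Consequently $-\mathcal{A}_\epsilon(t)$ generates an analytic process with the smoothing property that $L(t,\tau)$ maps $Y_0$ continuously into $Y_\theta$ for every $\theta\in(0,1)$ in the associated fractional power scale $\{Y_\theta\}$, with a bound of the order $(t-\tau)^{-\theta}$. Because these spaces satisfy $Y_\theta\hookrightarrow Y_0$ compactly (a consequence of the compactness of $X^{\alpha+\theta}\hookrightarrow X^\alpha$, which in turn follows from the compact resolvent of $A$), the operator $L(t,\tau)\colon Y_0\to Y_0$ is compact for each $t>\tau$.

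For the nonlinear part, I would use that $F\colon Y_0\to Y_{\alpha-1}$, defined in \eqref{nonlinearity F}, takes bounded sets to bounded sets; this follows from the subcritical growth \eqref{Gcondition} and the choice of $\alpha$ in \eqref{alpha condition}. Fixing $\theta$ with $0<\theta<\alpha$, the analytic smoothing yields $\|L(t,s)\|_{\mathcal{L}(Y_{\alpha-1},Y_\theta)}\leq C(t-s)^{-(1-\alpha+\theta)}$, and since $1-\alpha+\theta<1$ the singularity is integrable. Given a bounded set $B\subset Y_0$, the trajectories $\{S(s,\tau)W_0\colon \tau\leq s\leq t,\ W_0\in B\}$ are bounded in $Y_0$ by the continuous dependence in Theorem \ref{global-sol} together with Theorem \ref{the solution is exponentially dominated}, so $F(S(s,\tau)W_0)$ is bounded in $Y_{\alpha-1}$ uniformly in $s$ and in $W_0\in B$. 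It follows that the integral term is bounded in $Y_\theta$ uniformly on $B$, hence precompact in $Y_0$ by the compact embedding $Y_\theta\hookrightarrow Y_0$. Combining the two parts gives compactness of $S(t,\tau)$ for each $t>\tau$.

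The main obstacle I anticipate is establishing the analytic smoothing estimates for $L(t,s)$ uniformly, since the matrix operator $\mathcal{A}_\epsilon(t)$ carries the off-diagonal coupling $\pm a_\epsilon(t)A^{1\slash 2}$ of the same order as the damping $\eta A^{1\slash 2}$; verifying sectoriality and the resulting $(t-s)^{-(1-\alpha+\theta)}$ bound for the coupled system is exactly the delicate point noted in the introduction, and I would rely on the uniform sectorial estimates of Section \ref{linearized system} to overcome it.
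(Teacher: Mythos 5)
Your proposal is essentially the right argument, and it runs on exactly the machinery this paper uses elsewhere: the paper itself gives no in-text proof of Theorem \ref{compact S} (it defers to \cite[Proposition 5.4]{BNS}), but the representation $S(t,\tau)W_0 = L(t,\tau)W_0 + \int_\tau^t L(t,s)F(S(s,\tau)W_0)\,ds$, the estimate $\|L(t,s)\|_{\mathcal{L}(Y_{\alpha-1},Y_0)} \leq c(\alpha)(t-s)^{\alpha-1}$, and the extrapolated scale $Z_0 = Y_{-1}$, $Z_\alpha = Y_{\alpha-1}$, $Z_1 = Y_0$ all appear verbatim in the proof of Proposition \ref{Prop 3.5 rel}, while the uniform sectoriality and H\"older continuity needed for the Sobolevski\u i construction are Lemmas \ref{Lemma 3.2 aux AL1} and \ref{Lemma 3.3 aux AL2}. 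Your one-line claim that the matrix operator inherits compact resolvent from $A$ is correct here but deserves emphasis, since it is the genuinely delicate point and is \emph{false} for strong damping of order $1$: the explicit inverse displayed in the proof of Lemma \ref{Lemma 3.3 aux AL2} shows that every entry of $\mathcal{A}_{\epsilon}^{-1}(t)$ gains half a power of $A$ relative to its slot in $Y_0$ (e.g., the identity entries act as the compact embedding $X^{1/2}\hookrightarrow X$), so $\mathcal{A}_{\epsilon}^{-1}(t)$ is compact on $Y_0$, $Y_1 \hookrightarrow Y_0$ compactly, and $L(t,\tau) = \mathcal{A}_{\epsilon}^{-1}(t)\left[\mathcal{A}_{\epsilon}(t)L(t,\tau)\right]$ is compact for $t>\tau$. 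For damping $\eta A u_t$ the inverse contains genuinely non-compact identity blocks and the conclusion fails, so the order-$1/2$ structure you gesture at is doing real work.

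The one step that needs repair is the estimate $\|L(t,s)\|_{\mathcal{L}(Y_{\alpha-1},Y_\theta)} \leq C(t-s)^{-(1-\alpha+\theta)}$ for the integral term: in the scale the paper actually constructs, $Y_0 = Z_1$ is the \emph{top} level, so mapping into $Y_\theta = Z_{1+\theta}$ asks for smoothing beyond the domain of the realization, which holds only for $\theta$ below the H\"older exponent $\beta$ of $s\mapsto\mathcal{A}_{\epsilon}(s)$ and requires Sobolevski\u i-type bounds nowhere stated in the paper; as written, "fix $\theta$ with $0<\theta<\alpha$" is not justified. You can avoid this entirely with the standard truncation: for $0<\delta<t-\tau$,
\[
\int_\tau^{t-\delta} L(t,s)F(S(s,\tau)W_0)\,ds \;=\; L(t,t-\delta)\int_\tau^{t-\delta} L(t-\delta,s)F(S(s,\tau)W_0)\,ds,
\]
where the inner integral is bounded in $Y_0$ uniformly on $B$ by the stated $\mathcal{L}(Y_{\alpha-1},Y_0)$ estimate (the singularity $(t-\delta-s)^{\alpha-1}$ is integrable) and $L(t,t-\delta)$ is compact, while $\bigl\Vert \int_{t-\delta}^t L(t,s)F(S(s,\tau)W_0)\,ds \bigr\Vert_{Y_0} \leq c\,\delta^{\alpha}/\alpha$ uniformly on $B$; thus the integral term is a uniform limit of precompact sets, hence precompact. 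One further small inaccuracy: the uniform bound of $\{S(s,\tau)W_0: \tau\leq s\leq t,\ W_0\in B\}$ does not follow from continuous dependence (which gives nothing uniform over a bounded, non-compact $B$); it comes from the energy/Gronwall estimates in the proof of global well-posedness, exactly as this paper invokes them in the proof of Theorem \ref{stability}. With these two adjustments your argument is complete and, as far as the framework indicates, coincides in spirit with the cited proof.
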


In \cite[Theorem 2.2]{BNS} it was established that, for each $\epsilon\in [0, 1],$ the nonlinear evolution process $\{S_{\epsilon}(t, \tau)\colon t\geq\tau\in\mathbb{R}\}$, associated to the problem \eqref{edp01}-\eqref{cond01}, has a pullback attractor $\{ \mathbb{A}_{\epsilon}(t)\colon t\in\mathbb{R} \}$ in the phase space $Y_0$.

\begin{theorem}\label{ex pullback} \rm \textbf{[Existence of Pullback Attractors]} \it
Under the conditions of Theorem \ref{global-sol}, for each $\epsilon\in [0, 1],$ the problem \eqref{edp01}-\eqref{cond01} has a pullback attractor 
$\{ \mathbb{A}_{\epsilon}(t)\colon t\in\mathbb{R} \}$ in $Y_0$ and, moreover, the union
$\bigcup\limits_{t \in \mathbb{R}} \mathbb{A}_{\epsilon}(t) \subset Y_0$
is bounded.
\end{theorem}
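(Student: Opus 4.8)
The plan is to invoke the abstract existence criterion for pullback attractors recorded in Appendix \ref{appendix A}: an evolution process that is pullback strongly bounded dissipative and pullback asymptotically compact admits a (unique) pullback attractor. The strategy is therefore to verify these two hypotheses for the process $\{S_{\epsilon}(t,\tau)\}$ defined in \eqref{evolution process of the problem}, both of which are essentially contained in the results already recalled, and then to read off the boundedness of the union of the fibers from the fact that the absorbing set can be taken uniform in time.

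First I would verify pullback dissipativeness. By Theorem \ref{the solution is exponentially dominated}, there is a constant $R>0$ such that every bounded set $B\subset Y_0$ is absorbed, after a finite time $t_0(B)$, into the closed ball $\overline{B}_{Y_0}(0,\sqrt{R})$. Since this ball does not depend on $t$, the constant family $\{\overline{B}_{Y_0}(0,\sqrt{R})\}_{t\in\mathbb{R}}$ is a bounded pullback absorbing family for $\{S_{\epsilon}(t,\tau)\}$; in particular the process is pullback strongly bounded dissipative.

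Next I would verify pullback asymptotic compactness, which is immediate from Theorem \ref{compact S}: for each pair $t>\tau$ the map $S_{\epsilon}(t,\tau)\colon Y_0\to Y_0$ is compact, hence it sends the bounded absorbing ball to a relatively compact subset of $Y_0$. Consequently the process is pullback asymptotically compact (indeed eventually compact), and the abstract criterion of Appendix \ref{appendix A} yields the pullback attractor $\{\mathbb{A}_{\epsilon}(t)\colon t\in\mathbb{R}\}$.

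Finally, for the boundedness of $\bigcup_{t\in\mathbb{R}}\mathbb{A}_{\epsilon}(t)$ I would use the minimality of the pullback attractor together with the time-independence of the absorbing family: each fiber $\mathbb{A}_{\epsilon}(t)$ arises as the pullback $\omega$-limit set of the absorbing ball and is therefore contained in $\overline{B}_{Y_0}(0,\sqrt{R})$, so the union is contained in the same ball and is bounded. The main, and in fact only, genuine obstacle lies in the two structural hypotheses—uniform dissipativeness and compactness of the process maps—but these are precisely the content of Theorems \ref{the solution is exponentially dominated} and \ref{compact S}; once they are in hand the conclusion is a direct application of the abstract theory, with the uniform-in-time absorbing ball being exactly what upgrades the existence of the fibers to the boundedness of their union.
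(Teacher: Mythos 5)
Your proposal is correct and follows essentially the route the paper intends: the paper delegates this statement to \cite[Theorem 2.2]{BNS}, whose proof rests on exactly the two ingredients you use, namely the uniform-in-time absorbing ball of Theorem \ref{the solution is exponentially dominated} and the compactness of the process maps from Theorem \ref{compact S}, combined with the abstract existence criterion recalled in Appendix \ref{appendix A} (\cite[Theorem 2.23]{Livro Alexandre}). Your final observation that the time-independence of the absorbing ball is what yields the boundedness of $\bigcup_{t\in\mathbb{R}}\mathbb{A}_{\epsilon}(t)$ is likewise the argument used there.
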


The next result states the regularity of the pullback attractors obtained in Theorem \ref{ex pullback}, see \cite[Theorem 2.3]{BNS} for a proof.

\begin{theorem}\label{regularity pullback attractor} \rm \textbf{[Regularity of Pullback Attractors]} \it 
Assume that $\frac{N-1}{N-2} \leq \rho < \frac{N}{N-2}$, $N\geq 3$. For each $\epsilon\in [0, 1],$ the pullback attractor $\{ \mathbb{A}_{\epsilon}(t)\colon t\in\mathbb{R}\}$ for the problem \eqref{edp01}-\eqref{cond01}, obtained in Theorem \ref{ex pullback}, lies in a more regular space than $Y_0$. More precisely, we have $\bigcup\limits_{t \in \mathbb{R}} \mathbb{A}_{\epsilon}(t)$ is a bounded subset of 
$
Y_1 = X^1 \times X^{1\slash 2} \times X^1 \times X^{1\slash 2}.
$
\end{theorem}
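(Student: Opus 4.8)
The plan is to exploit the invariance of the pullback attractor together with the parabolic smoothing afforded by the strong damping, and then to run a bootstrap argument along the fractional power scale $Y_\theta = [Y_0, Y_1]_\theta$ (extended to negative indices by extrapolation, consistently with the definition of $Y_{\alpha-1}$ and $Y_{-1}$).

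First I would recall that, by invariance, every $W_0 \in \mathbb{A}_\epsilon(t_0)$ lies on a global solution $\xi \colon \mathbb{R} \to Y_0$ with $\xi(t_0) = W_0$ and $\xi(t) \in \mathbb{A}_\epsilon(t)$ for all $t$; by Theorem \ref{ex pullback} there is $M = M(\epsilon) > 0$ with $\sup_{t \in \mathbb{R}} \|\xi(t)\|_{Y_0} \leq M$ for every such trajectory. The decisive structural fact is that, because the damping terms are of order $A^{1/2}$, the family $\{\mathcal{A}_\epsilon(t)\}$ is uniformly sectorial (uniformly in $t \in \mathbb{R}$ thanks to \eqref{function a is bounded}--\eqref{hol-a}), so it generates a linear evolution process $U_\epsilon(t,s)$ on $Y_0$ enjoying exponential decay together with the smoothing estimates
\[
\|U_\epsilon(t,s)\|_{\mathcal{L}(Y_\mu, Y_\nu)} \leq C\, (t-s)^{-(\nu-\mu)} e^{-\omega(t-s)}, \qquad t > s, \quad 0 \leq \nu - \mu < 1,
\]
with $C, \omega > 0$ independent of $t, s$. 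Writing the variation-of-constants formula for $\xi$ and letting the initial time tend to $-\infty$ --- the linear term dies because $\|U_\epsilon(t,\tau)\xi(\tau)\|_{Y_0} \leq C e^{-\omega(t-\tau)} M \to 0$ --- I obtain the negative-time representation
\[
\xi(t) = \int_{-\infty}^{t} U_\epsilon(t,s)\, F(\xi(s))\, ds, \qquad t \in \mathbb{R}.
\]

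Second, I would bootstrap on the regularity index. If $\xi(s)$ is bounded in $Y_\theta$ uniformly in $s$ and in the trajectory, then its first component $u(s)$ is bounded in $X^{(1+\theta)/2}$; the subcritical bound \eqref{Gcondition}, combined with the sharper range $\frac{N-1}{N-2} \le \rho < \frac{N}{N-2}$ and the Sobolev embeddings, shows that the Nemitski\u\i\ term $f^e(u(s))$ lies in a fractional space whose order improves as $\theta$ grows, so that $F(\xi(s))$ is bounded in $Y_{\gamma(\theta)}$ with $\gamma(0) = \alpha - 1$ and $\gamma(\theta)$ nondecreasing in $\theta$. Inserting this into the representation above and applying the smoothing estimate with $\mu = \gamma(\theta)$ gives, for every $\nu < \gamma(\theta) + 1$,
\[
\|\xi(t)\|_{Y_\nu} \leq \int_{-\infty}^{t} C\,(t-s)^{-(\nu - \gamma(\theta))} e^{-\omega(t-s)}\, \|F(\xi(s))\|_{Y_{\gamma(\theta)}}\, ds \leq C',
\]
the integral converging because the singularity exponent $\nu - \gamma(\theta)$ is kept strictly below $1$, while the exponential factor makes the bound uniform in $t$. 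Starting from $\theta = 0$ and iterating, the admissible regularity index strictly increases at each stage, and choosing the increments so that the targets remain admissible, the iteration reaches $\nu = 1$ in finitely many steps. This yields a bound on $\|\xi(t)\|_{Y_1}$ uniform in $t$ and in the trajectory; since $W_0 = \xi(t_0)$ was an arbitrary point of $\mathbb{A}_\epsilon(t_0)$ and $t_0$ was arbitrary, $\bigcup_{t} \mathbb{A}_\epsilon(t)$ is bounded in $Y_1$.

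The main obstacle is the bootstrap bookkeeping: one must verify that $f^e$ genuinely gains regularity at each stage --- that $\gamma(\theta)$ strictly increases along the iteration and that the singular kernel exponent can always be kept strictly below $1$ --- and that the procedure terminates at $Y_1 = D(\mathcal{A}_\epsilon(t))$ after finitely many steps. This is exactly where the narrowed growth window $\frac{N-1}{N-2} \le \rho < \frac{N}{N-2}$ is used, in contrast with the weaker condition $\rho < \frac{N+2}{N-2}$ sufficient for well-posedness: it forces $f^e(u)$ into a target space regular enough for the next smoothing step while keeping all the Nemitski\u\i\ estimates on the intermediate spaces admissible. A secondary but essential point is that all constants in the smoothing estimates and the dissipative bound $M$ are uniform in $t$, which is what upgrades the pointwise-in-$t$ regularity to uniform boundedness of the entire union $\bigcup_{t} \mathbb{A}_\epsilon(t)$ in $Y_1$.
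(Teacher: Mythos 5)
Your proposal takes essentially the same route as the proof the paper relies on: the theorem is quoted from \cite[Theorem 2.3]{BNS}, and that argument --- as this paper itself records in the proof of Proposition \ref{Prop 3.5 rel} --- rests precisely on your representation $\xi(t) = \int_{-\infty}^{t} L(t,s)F(\xi(s))\,ds$ for complete bounded trajectories in the attractor (obtained by letting the initial time tend to $-\infty$ via the uniform exponential decay of the linear process, cf.\ Remark \ref{R1 first}), followed by ``progressive increases of regularity'' along the fractional power scale using the smoothing estimate $\Vert L(t,s)\Vert_{\mathcal{L}(Y_{\alpha-1},Y_0)} \leq c(\alpha)(t-s)^{\alpha-1}$ and the narrowed subcritical window $\frac{N-1}{N-2} \leq \rho < \frac{N}{N-2}$, exactly as you describe. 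Your outline is correct in substance and matches that strategy.
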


As a consequence of Theorem \ref{regularity pullback attractor},  it follows by the compact embedding $Y_1 \hookrightarrow Y_0$  that
\begin{equation}\label{compactness limit set}
\overline{\bigcup\limits_{\epsilon\in [0, 1]} \bigcup\limits_{t \in \mathbb{R}} 
\mathbb{A}_{\epsilon}(t)}
\mbox{\,\, is compact in \,} Y_0.
\end{equation}

%A proof of Lemma \ref{Lem_Aux_Int} can be found in \cite{BNS}, which uses some ideas from \cite{Arrieta, Cholewa, Hale}.
%\begin{lemma}\label{Lem_Aux_Int}
%Let $f \in C^1(\mathbb{R})$ be a function satisfying \eqref{dissipativeness}-\eqref{Gcondition}. Given $r> 0$, there exist constants $C_{r} > 0$ and $C \geq 0$ $($which does not depend on $r)$ such that
%\[
%\left| \int_{\Omega} f(u)u dx \right| \leq C_{r}\Vert u\Vert_{X^{\frac{1}{2}}}^2 + C \quad\text{and}\quad 
%\left| \int_{\Omega}\int_0^u f(s)ds dx \right| \leq C_{r}\Vert u\Vert_{X^{\frac{1}{2}}}^2 + C
%\]
%for all $u \in X^{\frac{1}{2}}$ with $\|u\|_{X^{\frac{1}{2}}} \leq r$.  If $f(0) = 0$ then the constant $C$ can be chosen zero.
%\end{lemma}

Lemma \ref{Aux applic L 0} deals with the continuity of the evolution process $\{S_{\epsilon}(t, \tau): t \geq \tau \in \R\}$  as $\epsilon \to 0^+$. This result is proved in \cite[Theorem 2.4]{BNS}.

\begin{lemma}\label{Aux applic L 0}
Let $\{S_{\epsilon}(t, \tau)\colon t \geq \tau \in \R\}$ be the evolution processes 
associated to the problem \eqref{edp01}-\eqref{cond01}. Then $S_{\epsilon}(t, \tau)x \xrightarrow{\epsilon \rightarrow 0^+} S_0(t, \tau)x$ uniformly in bounded subsets of $\{(t, \tau) \in \mathbb{R}^2 \colon t \geq \tau\} \times Y_0$.
\end{lemma}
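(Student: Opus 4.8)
The plan is to establish the convergence by a direct energy estimate on the difference $S_\epsilon(t,\tau)x-S_0(t,\tau)x$, taking advantage of the fact that the dependence on $\epsilon$ enters \eqref{edp01} only through the scalar coefficient $a_\epsilon(t)$, together with the uniform smallness \eqref{norm function a}. Fix a bounded set $B\subset Y_0$ and a bound $T>0$ on the time increment $t-\tau$, and write $W^\epsilon(t)=S_\epsilon(t,\tau)x=(u^\epsilon,u^\epsilon_t,v^\epsilon,v^\epsilon_t)$ and $W^0(t)=S_0(t,\tau)x=(u^0,u^0_t,v^0,v^0_t)$ for a common initial datum $x\in B$. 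By Theorem \ref{global-sol} and the uniform bounds \eqref{function a is bounded}, these solutions stay in a ball of some radius $R=R(B,T)$ of $Y_0$, uniformly for $\epsilon\in[0,1]$, $x\in B$ and $\tau\le t\le\tau+T$. Putting $p=u^\epsilon-u^0$ and $q=v^\epsilon-v^0$, subtracting the two systems and isolating the coefficient mismatch yields
\begin{align*}
p_{tt}-\Delta p+p+\eta A^{1/2}p_t+a_0(t)A^{1/2}q_t &= f(u^\epsilon)-f(u^0)-(a_\epsilon(t)-a_0(t))A^{1/2}v^\epsilon_t,\\
q_{tt}-\Delta q+\eta A^{1/2}q_t-a_0(t)A^{1/2}p_t &= (a_\epsilon(t)-a_0(t))A^{1/2}u^\epsilon_t,
\end{align*}
with vanishing initial data, the whole effect of the perturbation being confined to the two forcing terms on the right.

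Next I would introduce the energy
\[
E(t)=\tfrac12\big(\|p_t\|_X^2+\|A^{1/2}p\|_X^2+\|p\|_X^2+\|q_t\|_X^2+\|A^{1/2}q\|_X^2\big),
\]
which is equivalent to $\tfrac12\|W^\epsilon(t)-W^0(t)\|_{Y_0}^2$, and differentiate it by pairing the first difference equation with $p_t$ and the second with $q_t$ in $X$. The decisive structural point is that the two coupling terms carrying $a_0(t)$ cancel, since $A^{1/2}$ is self-adjoint and $a_0(t)\langle A^{1/2}q_t,p_t\rangle_X-a_0(t)\langle A^{1/2}p_t,q_t\rangle_X=0$; meanwhile the damping terms contribute $\eta\|A^{1/4}p_t\|_X^2+\eta\|A^{1/4}q_t\|_X^2$ with the right sign. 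This leaves
\[
\frac{dE}{dt}+\eta\|A^{1/4}p_t\|_X^2+\eta\|A^{1/4}q_t\|_X^2=\langle f(u^\epsilon)-f(u^0),p_t\rangle_X+\mathcal{P}_\epsilon(t),
\]
where $\mathcal{P}_\epsilon(t)=(a_\epsilon(t)-a_0(t))\big(\langle A^{1/2}u^\epsilon_t,q_t\rangle_X-\langle A^{1/2}v^\epsilon_t,p_t\rangle_X\big)$ gathers the perturbation. The nonlinear term is handled by the local Lipschitz continuity of the Nemitski\u{i} map $f^e\colon X^{1/2}\to X$ on the $R$-ball, guaranteed by the subcritical growth \eqref{Gcondition}, and gives $\langle f(u^\epsilon)-f(u^0),p_t\rangle_X\le C(R)E(t)$.

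The core of the proof, and the step I expect to be the main obstacle, is the control of $\mathcal{P}_\epsilon(t)$: since a $Y_0$-solution has its velocity components only in $X$, the factors $A^{1/2}u^\epsilon_t$ and $A^{1/2}v^\epsilon_t$ do not belong to $X$, so this forcing genuinely lives in the extrapolated space $Y_{-1}$ and cannot be absorbed by a crude estimate or by a variation-of-constants argument (the associated analytic smoothing produces a non-integrable singularity). My plan is to exploit precisely that the order of the coupling matches the order of the damping: writing $A^{1/2}=A^{1/4}A^{1/4}$ and using self-adjointness of $A^{1/4}$ gives $\langle A^{1/2}v^\epsilon_t,p_t\rangle_X=\langle A^{1/4}v^\epsilon_t,A^{1/4}p_t\rangle_X$, and Young's inequality then splits this, and its $q_t$-counterpart, as
\[
|\mathcal{P}_\epsilon(t)|\le \tfrac{\eta}{2}\|A^{1/4}p_t\|_X^2+\tfrac{\eta}{2}\|A^{1/4}q_t\|_X^2+\frac{C}{\eta}\|a_\epsilon-a_0\|_{L^\infty(\mathbb{R})}^2\big(\|A^{1/4}u^\epsilon_t\|_X^2+\|A^{1/4}v^\epsilon_t\|_X^2\big).
\]
The first two summands are absorbed by the damping on the left-hand side, leaving only a forcing proportional to $\|a_\epsilon-a_0\|_{L^\infty(\mathbb{R})}^2$ times the dissipation density of the $\epsilon$-problem.

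Finally I would integrate over $[\tau,t]$ and close by Gronwall's inequality. Here the essential observation is that the basic energy identity for the $\epsilon$-problem itself — obtained by pairing \eqref{edp01} with $(u^\epsilon_t,v^\epsilon_t)$, where the same antisymmetric coupling cancels — shows that the strong damping yields a dissipation bound $\int_\tau^t\big(\|A^{1/4}u^\epsilon_t\|_X^2+\|A^{1/4}v^\epsilon_t\|_X^2\big)\,ds\le K(R,T)$, uniformly in $\epsilon\in[0,1]$, $x\in B$ and $t\le\tau+T$. Consequently the accumulated perturbation is bounded by $\delta(\epsilon):=C\eta^{-1}\|a_\epsilon-a_0\|_{L^\infty(\mathbb{R})}^2 K(R,T)$, which tends to $0$ as $\epsilon\to 0^+$ by \eqref{norm function a}. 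Since $E(\tau)=0$, Gronwall's inequality gives $E(t)\le\delta(\epsilon)\,e^{C(R)T}$, whence
\[
\sup\big\{\|S_\epsilon(t,\tau)x-S_0(t,\tau)x\|_{Y_0}^2\colon x\in B,\ \tau\le t\le\tau+T\big\}\le 2\,\delta(\epsilon)\,e^{C(R)T}\xrightarrow{\epsilon\to0^+}0,
\]
which is exactly the asserted uniform convergence on bounded subsets. The energy manipulations, formal for $Y_0$-data, would be justified rigorously by first performing them for regular data in $Y_1$ and then passing to the limit using density and the continuous dependence on initial data from Theorem \ref{global-sol}.
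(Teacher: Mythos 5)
Your proposal is essentially correct where the paper needs the lemma, but it takes a genuinely different route from the source. The paper offers no internal proof at all: it cites \cite[Theorem 2.4]{BNS}, where the convergence is obtained from the variation-of-constants formula, treating the nonlinearity as a map $F\colon Y_0\to Y_{\alpha-1}$ that is Lipschitz on bounded sets and combining convergence of the linear processes with a singular Gronwall argument in the extrapolated scale — an argument valid in the whole subcritical range $1<\rho<\frac{N+2}{N-2}$. The closest in-text analogue of your argument is Theorem \ref{stability}, which runs the same kind of $L^2$ energy estimate on the difference of two solutions, but only for solutions lying on the pullback attractors: there the regularity theorem (Theorem \ref{regularity pullback attractor}) places the mismatched velocities in $X^{1/2}$, so the perturbation term $[a_n(t)-a_\infty(t)]\langle(-\Delta)^{1/2}v_t^{\infty},u_t\rangle_X$ is bounded directly. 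Your decomposition dispenses with that crutch: you keep the mismatch on $v_t^{\epsilon}$, write $A^{1/2}=A^{1/4}A^{1/4}$, absorb half of the resulting term into the strong damping, and pay with $\|a_\epsilon-a_0\|_{L^\infty}^2$ times the dissipation density, which is uniformly integrable in time by the energy identity of the $\epsilon$-problem itself (the antisymmetric coupling cancels there too, and $\mathcal{E}$ is bounded above at $t=\tau$ and below on the uniform $R$-ball). This is precisely the structural feature the introduction highlights — coupling of the same fractional order as the damping — and it buys the statement for arbitrary bounded data in $Y_0$, not merely attractor data, which is what the lemma asserts; it is a nice, self-contained alternative to the mild-solution machinery of \cite{BNS}.

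One caveat you should flag. The assertion that the subcritical growth \eqref{Gcondition} alone makes $f^e\colon X^{1/2}\to X$ locally Lipschitz is false for $\frac{N}{N-2}<\rho<\frac{N+2}{N-2}$: for $u\in H_0^1(\Omega)$ one only gets $f(u)\in L^{2N/((N-2)\rho)}(\Omega)$, so the pairing $\langle f(u^\epsilon)-f(u^0),p_t\rangle_X$ need not be defined, and the Hölder splitting $\|f'(\theta)\|_{L^{2\rho/(\rho-1)}}\|p\|_{L^{2\rho}}\|p_t\|_{L^2}$ requires the embedding $X^{1/2}\hookrightarrow L^{2\rho}(\Omega)$, i.e.\ $\rho\le\frac{N}{N-2}$ — the same restriction the paper imposes explicitly in the proof of Theorem \ref{stability}. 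Hence your argument proves the lemma under the standing assumption $\frac{N-1}{N-2}\le\rho<\frac{N}{N-2}$ adopted from Section \ref{limit problem} onward (which covers every use of the lemma in this paper), but not in the full subcritical generality of Section \ref{Abstract setting} in which it is stated; for that range one genuinely needs the $Y_{\alpha-1}$-based argument of \cite{BNS}. A second, minor point: your closing density step needs continuous dependence on initial data \emph{uniformly in} $\epsilon$, which Theorem \ref{global-sol} does not state; it does follow, however, from your own energy scheme applied to two solutions of the same $\epsilon$-system, where the coupling cancels identically and Gronwall gives a Lipschitz constant $e^{C(R)(t-\tau)}$ independent of $\epsilon$ — worth a sentence in a final write-up.
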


In \cite[Theorem 2.4]{BNS} it was also proved the upper semicontinuity of the family of pullback attractors associated to the problem \eqref{edp01}-\eqref{cond01}. The next theorem states this result.

%This result requires that condition \eqref{norm function a} holds. 

\begin{theorem}\label{upper PA} \rm \textbf{[Upper Semicontinuity]} \it 
Under condition \eqref{norm function a}, the family of pullback attractors 
$\{\mathbb{A}_{\epsilon}(t)\colon t\in\mathbb{R}\}$, associated to the problem \eqref{edp01}-\eqref{cond01}, is upper semicontinuous at $\epsilon = 0$, that is, for each $t\in\mathbb{R}$, 
$$
{\rm d_H} (\mathbb{A}_{\epsilon}(t), \mathbb{A}_0(t)) \rightarrow 0 
\quad \mbox{as} \quad \epsilon\rightarrow 0^{+}.
$$
\end{theorem}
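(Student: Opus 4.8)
The plan is to argue by contradiction, combining the precompactness statement \eqref{compactness limit set} with the uniform convergence of the evolution processes supplied by Lemma \ref{Aux applic L 0} and the standard characterization of the pullback attractor as the set of values of bounded global solutions (see Appendix \ref{appendix A}). Fix $t \in \mathbb{R}$ and suppose, for contradiction, that upper semicontinuity fails at this time. Then there exist $\delta > 0$, a sequence $\epsilon_n \to 0^+$, and points $x_n \in \mathbb{A}_{\epsilon_n}(t)$ with $\mathrm{dist}_{Y_0}(x_n, \mathbb{A}_0(t)) \geq \delta$ for every $n$.

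First I would use the invariance of the pullback attractors to lift each $x_n$ to a global solution. For each $n$ there is a global solution $\xi_n \colon \mathbb{R} \to Y_0$ of the process $\{S_{\epsilon_n}(t, \tau)\}$ with $\xi_n(t) = x_n$ and $\xi_n(s) \in \mathbb{A}_{\epsilon_n}(s)$ for all $s \in \mathbb{R}$; in particular $\xi_n(s) \in K := \overline{\bigcup_{\epsilon \in [0,1]} \bigcup_{s \in \mathbb{R}} \mathbb{A}_{\epsilon}(s)}$, which is compact in $Y_0$ by \eqref{compactness limit set}. Using compactness of $K$ together with a diagonal extraction over a countable dense subset of $\mathbb{R}$ (say $\mathbb{Q}$), I would pass to a subsequence, not relabeled, along which $\xi_n(s) \to \xi(s)$ in $Y_0$ for every rational $s$, and then extend $\xi$ to the remaining times by continuity. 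In particular $x_n = \xi_n(t) \to \xi(t) =: x \in K$.

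Next I would show that $\xi$ is a bounded global solution for the limit process. For any pair $s \leq t'$ the invariance gives $\xi_n(t') = S_{\epsilon_n}(t', s)\xi_n(s)$, and I would pass to the limit by splitting
\[
\| S_{\epsilon_n}(t', s)\xi_n(s) - S_0(t', s)\xi(s) \|_{Y_0} \leq \| S_{\epsilon_n}(t', s)\xi_n(s) - S_0(t', s)\xi_n(s) \|_{Y_0} + \| S_0(t', s)\xi_n(s) - S_0(t', s)\xi(s) \|_{Y_0}.
\]
The first term tends to zero by Lemma \ref{Aux applic L 0}, since all the $\xi_n(s)$ lie in the bounded set $K$ and the convergence there is uniform on bounded subsets; the second tends to zero by the continuous dependence on initial data of $S_0$ (Theorem \ref{global-sol}) together with $\xi_n(s) \to \xi(s)$. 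Hence $\xi(t') = S_0(t', s)\xi(s)$ for all $s \leq t'$, so $\xi$ is a global solution of the limit process passing through $x$ at time $t$. Since $\xi(\mathbb{R}) \subset K$ is bounded, $\xi$ is a bounded global solution, and therefore $x = \xi(t) \in \mathbb{A}_0(t)$. This contradicts $\mathrm{dist}_{Y_0}(x_n, \mathbb{A}_0(t)) \geq \delta$, because $x_n \to x \in \mathbb{A}_0(t)$ forces $\mathrm{dist}_{Y_0}(x_n, \mathbb{A}_0(t)) \to 0$.

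The main obstacle is the limit passage in the invariance identity: Lemma \ref{Aux applic L 0} gives convergence of $S_{\epsilon_n}$ to $S_0$ on fixed arguments, so the argument must simultaneously control the \emph{moving} initial data $\xi_n(s)$. This is precisely why both the precompactness of the family in $K$ (so that the sequence of global solutions has a pointwise limit) and the uniformity on bounded sets in Lemma \ref{Aux applic L 0} (so that the first term above vanishes) are indispensable. I note finally that hypothesis \eqref{norm function a} enters only through Lemma \ref{Aux applic L 0}, which is where the closeness of $a_{\epsilon}$ to $a_0$ is actually invoked.
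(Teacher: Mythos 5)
Your proof is correct in substance, but it is not the paper's route: the paper imports this theorem from \cite[Theorem 2.4]{BNS}, where the argument is the standard direct estimate that needs no compactness at all. Namely, fix $t$ and $\delta>0$, take a bounded set $B$ containing $\bigcup_{\epsilon}\bigcup_{s}\mathbb{A}_{\epsilon}(s)$ (Theorems \ref{the solution is exponentially dominated} and \ref{ex pullback}), use pullback attraction of $\{\mathbb{A}_0(t)\}$ to fix $\tau \ll t$ with ${\rm d_H}(S_0(t,\tau)B,\mathbb{A}_0(t))<\delta/2$, and then write each $x\in\mathbb{A}_{\epsilon}(t)$ as $x=S_{\epsilon}(t,\tau)y$ with $y\in\mathbb{A}_{\epsilon}(\tau)\subset B$ by invariance, so that $\operatorname{dist}(x,\mathbb{A}_0(t))\leq \Vert S_{\epsilon}(t,\tau)y-S_0(t,\tau)y\Vert_{Y_0}+{\rm d_H}(S_0(t,\tau)B,\mathbb{A}_0(t))<\delta$ for $\epsilon$ small, by Lemma \ref{Aux applic L 0}. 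This uses only uniform boundedness of the attractors and works in the full subcritical range $1<\rho<\frac{N+2}{N-2}$ under which the theorem is stated in Section \ref{Abstract setting}. Your contradiction argument via bounded global solutions buys a somewhat stronger structural fact (limit points of $\mathbb{A}_{\epsilon_n}(t)$ are values of bounded global solutions of the limit process, which is essentially the upper-semicontinuity mechanism used later for the lower-semicontinuity analysis), but it is strictly more expensive.

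The genuine caveat is a hypothesis mismatch: your extraction step relies on \eqref{compactness limit set}, which is a consequence of the regularity result Theorem \ref{regularity pullback attractor} and is therefore available only under the extra restriction $\frac{N-1}{N-2}\leq\rho<\frac{N}{N-2}$ — the standing assumption of Section \ref{limit problem}, not of Section \ref{Abstract setting} where the theorem is stated. Boundedness of $K$ alone does not yield convergent subsequences in $Y_0$, so as written your proof establishes the theorem only in that restricted range; to get the full statement you must either replace the compactness step or run the direct estimate above. Two smaller, repairable points: the lifting of $x_n$ to a global solution $\xi_n$ inside the attractor should be justified by a recursive choice of backward preimages using the invariance $S_{\epsilon_n}(s,s-1)\mathbb{A}_{\epsilon_n}(s-1)=\mathbb{A}_{\epsilon_n}(s)$ (standard, but worth a line); and ``extend $\xi$ by continuity'' from the rationals is glib — the clean fix is to first prove $\xi(t')=S_0(t',s)\xi(s)$ for rational $s\leq t'$ by your splitting, then \emph{define} $\xi(t'):=S_0(t',s)\xi(s)$ at the remaining times (well-defined by the cocycle property), after which the same splitting gives $\xi_n(t')\to\xi(t')$ for every real $t'$. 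Finally, Appendix \ref{appendix A} does not actually state the characterization of the attractor by bounded global solutions; the half you need — that every bounded global solution lies in the attractor — should be derived directly from pullback attraction applied to the bounded set $\{\xi(\tau)\colon\tau\in\mathbb{R}\}$ together with closedness of $\mathbb{A}_0(t)$.
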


\section{Structure of the limit problem}\label{limit problem}

In this section, we are going to study the structure of the limit problem
\begin{equation}\label{limit system}
\begin{cases}
u_{tt} - \Delta u + u + \eta(-\Delta)^{1\slash 2}u_t + a_0(t)(-\Delta)^{1\slash 2}v_t = f(u), & (x, t) \in\Omega\times (\tau, \infty), \\
v_{tt} - \Delta v + \eta(-\Delta)^{1\slash 2}v_t - a_0(t)(-\Delta)^{1\slash 2}u_t = 0, & (x, t) \in\Omega\times (\tau, \infty), \\
u = v = 0, & (x, t) \in\partial\Omega\times (\tau, \infty), \\
(u(\tau), u_t(\tau), v(\tau), v_t(\tau)) = (u_0, u_1, v_0, v_1),
\end{cases}
\end{equation}
which is associated to the problem \eqref{edp01}-\eqref{cond01} with parameter $\epsilon = 0$. Firstly, we will prove a result on asymptotic stability for solutions of \eqref{limit system} starting in the pullback attractor. Thereafter, this will be used to show that the limit pullback attractor $\{\mathbb{A}_0(t)\colon t\in\mathbb{R}\}$ can be written as the union of the unstable manifolds around equilibria.

 From now on, we shall assume that $\frac{N-1}{N-2} \leq \rho < \frac{N}{N-2}$. Recall from Theorem \ref{regularity pullback attractor} that this condition ensures that $\overline{\bigcup\limits_{\epsilon\in [0, 1]} \bigcup\limits_{t \in \mathbb{R}} 
\mathbb{A}_{\epsilon}(t)}$ is compact in $Y_0$.

\subsection{Asymptotic stability of solutions} Recall that $a_{\epsilon}$ is $(\beta, C)$-H\"{o}lder continuous for each $\epsilon \in [0,1]$, with $0 < \beta \leq 1$ (see \eqref{hol-a}). By Arzel\`a-Ascoli Theorem, for each fixed $t\in\mathbb{R}$, given a sequence $\{t_n\}_{n\in\mathbb{N}} \subset\mathbb{R}$, we have
\begin{equation}\label{a_n def}
\{a_n(t) = a_0(t_n + t)\}_{n\in\mathbb{N}}
\end{equation}
has a convergent subsequence (which we denote by the same notation) such that $$a_n(t) \xrightarrow{n\rightarrow\infty} a_{\infty}(t)$$ uniformly for $t$ in compact subsets of $\mathbb{R}$. Note that the limit function $a_{\infty}$ is also bounded as in \eqref{function a is bounded}. Thus, we may consider the following problems
 %If we take $\beta = 1$ in \eqref{hol-a}, then we have $a_{\epsilon}$ is a globally Lipschitz function for each $\epsilon\in [0, 1]$.  Using the fact that, if $g_{\tau}(\cdot) = g(\tau + \cdot)$ is the translation of a globally Lipschitz function $g$, then the set $\Sigma = \overline{\{g_{\tau}\colon \tau\in\mathbb{R}\}}$ is compact in the Banach space $C_b(\mathbb{R})$ of continuous and bounded functions from $\mathbb{R}$ into itself with the supremum norm, it follows that, for each $t\in\mathbb{R}$ fixed, given a sequence $\{t_n\}_{n\in\mathbb{N}} \subset\mathbb{R}$, then $\{a_n(t) = a_0(t_n + t)\}_{n\in\mathbb{N}}$ has a convergent subsequence (which we denote with the same notation) such that $a_n(t) \xrightarrow{n\rightarrow\infty} a_{\infty}(t)$ uniformly for $t$ in compact subsets of $\mathbb{R}$. Note that the limit function $a_{\infty}$ is also bounded as in \eqref{function a is bounded}. Thus, we may consider the following problems
\begin{equation}\label{problem n}
\begin{cases}
u_{tt} - \Delta u + u + \eta(-\Delta)^{1\slash 2}u_t + a_n(t)(-\Delta)^{1\slash 2}v_t = f(u), & (x, t) \in\Omega\times (\tau, \infty), \\
v_{tt} - \Delta v + \eta(-\Delta)^{1\slash 2}v_t - a_n(t)(-\Delta)^{1\slash 2}u_t = 0, & (x, t) \in\Omega\times (\tau, \infty), \\
u = v = 0, & (x, t) \in\partial\Omega\times (\tau, \infty), \\
(u(\tau), u_t(\tau), v(\tau), v_t(\tau)) = (u_0, u_1, v_0, v_1) \in Y_0,
\end{cases}
\end{equation}
and
\begin{equation}\label{problem with limit function}
\begin{cases}
u_{tt} - \Delta u + u + \eta(-\Delta)^{1\slash 2}u_t + a_{\infty}(t)(-\Delta)^{1\slash 2}v_t = f(u), & (x, t) \in\Omega\times (\tau, \infty), \\
v_{tt} - \Delta v + \eta(-\Delta)^{1\slash 2}v_t - a_{\infty}(t)(-\Delta)^{1\slash 2}u_t = 0, & (x, t) \in\Omega\times (\tau, \infty), \\
u = v = 0, & (x, t) \in\partial\Omega\times (\tau, \infty), \\
(u(\tau), u_t(\tau), v(\tau), v_t(\tau)) = (u_0, u_1, v_0, v_1) \in Y_0,
\end{cases}
\end{equation}
with global solutions given, respectively, by $W^n(t) = (u^n(t), u_t^n(t), v^n(t), v_t^n(t))$ and $W^{\infty}(t) = (u^{\infty}(t), u_t^{\infty}(t), v^{\infty}(t), v_t^{\infty}(t))$, $t\geq \tau\in \mathbb{R}.$

Moreover, for $n\in\mathbb{N}$, let
\[
\{\mathbb{A}_n(t)\colon t\in\mathbb{R}\} 
\quad \mbox{and} \quad 
\{\mathbb{A}_{\infty}(t)\colon t\in\mathbb{R}\}
\]
be the pullback attractors associated with the evolution problems \eqref{problem n} and \eqref{problem with limit function}, respectively.

\begin{theorem}\label{stability}
If $W^n(\cdot) = (u^n(\cdot), u_t^n(\cdot), v^n(\cdot), v_t^n(\cdot))$ and $W^{\infty}(\cdot) = (u^{\infty}(\cdot), u_t^{\infty}(\cdot), v^{\infty}(\cdot), v_t^{\infty}(\cdot))$ are the global solutions associated to the problems \eqref{problem n} and \eqref{problem with limit function}, respectively, starting in their pullback attractors, then
\[
\Vert W^n(t) - W^{\infty}(t) \Vert_{Y_0}^2 \xrightarrow{n\rightarrow\infty} 0
\]
in compact subsets of $\mathbb{R}$.
\end{theorem}
\begin{proof}
If we denote $u = u^n - u^{\infty}$ and $v = v^n - v^{\infty}$, then from systems \eqref{problem n} and \eqref{problem with limit function}, we are able to obtain the following system
\[
\begin{cases}
u_{tt} - \Delta u + u + \eta(-\Delta)^{1\slash 2}u_t + a_n(t)(-\Delta)^{1\slash 2}v_t^n - 
a_{\infty}(t)(-\Delta)^{1\slash 2}v_t^{\infty} = f(u^n) - f(u^{\infty}), \\
v_{tt} - \Delta v + \eta(-\Delta)^{1\slash 2}v_t - a_n(t)(-\Delta)^{1\slash 2}u_t^n + 
a_{\infty}(t)(-\Delta)^{1\slash 2}u_t^{\infty} = 0,
\end{cases}
\]
for all $t>\tau$ and $x\in\Omega$, which can be rewritten in the following way
\[
\begin{cases}
u_{tt} - \Delta u + u + \eta(-\Delta)^{1\slash 2}u_t + a_n(t)(-\Delta)^{1\slash 2}v_t + 
[a_n(t) - a_{\infty}(t)](-\Delta)^{1\slash 2}v_t^{\infty} = f(u^n) - f(u^{\infty}), \\
v_{tt} - \Delta v + \eta(-\Delta)^{1\slash 2}v_t - a_n(t)(-\Delta)^{1\slash 2}u_t + 
[a_{\infty}(t) - a_n(t)](-\Delta)^{1\slash 2}u_t^{\infty} = 0,
\end{cases}
\]
for all $t>\tau$ and $x\in\Omega$. Taking the inner product of the first equation in the previous system with $u_t$ in $X$, and also the inner product of the second equation with $v_t$ in $X$, we get
\[
\begin{split}
&\frac{1}{2}\frac{d}{dt}\int_{\Omega}\vert u_t\vert^2 \, dx 
+ \frac{1}{2}\frac{d}{dt}\int_{\Omega}\vert\nabla u\vert^2 \, dx 
+ \frac{1}{2}\frac{d}{dt}\int_{\Omega}\vert u\vert^2 \, dx 
+ \eta\Vert (-\Delta)^{1\slash 4}u_t \Vert_X^2 \\
&+ a_n(t) \langle (-\Delta)^{1\slash 2}v_t, u_t \rangle_X 
+ [a_n(t) - a_{\infty}(t)] \langle (-\Delta)^{1\slash 2}v_t^{\infty}, u_t \rangle_X \\
&= \langle f(u^n) - f(u^{\infty}), u_t \rangle_X
\end{split}
\]
and
\[
\begin{split}
&\frac{1}{2}\frac{d}{dt}\int_{\Omega}\vert v_t\vert^2 \, dx 
+ \frac{1}{2}\frac{d}{dt}\int_{\Omega}\vert\nabla v\vert^2 \, dx 
+ \eta\Vert (-\Delta)^{1\slash 4}v_t \Vert_X^2 \\
&- a_n(t) \langle (-\Delta)^{1\slash 2}u_t, v_t \rangle_X 
+ [a_{\infty}(t) - a_n(t)] \langle (-\Delta)^{1\slash 2}u_t^{\infty}, v_t \rangle_X = 0,
\end{split}
\]
respectively. Adding up these two last equations, it follows that
\[
\begin{split}
&\frac{1}{2}\frac{d}{dt} \left( \int_{\Omega}\vert\nabla u\vert^2 \, dx + 
\int_{\Omega}\vert u\vert^2 \, dx + 
\int_{\Omega}\vert u_t\vert^2 \, dx + 
\int_{\Omega}\vert\nabla v\vert^2 \, dx + 
\int_{\Omega}\vert v_t\vert^2 \, dx \right) \\
&\quad + \eta\Vert (-\Delta)^{1\slash 4} u_t \Vert_X^2 + 
\eta\Vert (-\Delta)^{1\slash 4} v_t \Vert_X^2 + 
[a_n(t) - a_{\infty}(t)] \langle (-\Delta)^{1\slash 2}v_t^{\infty}, u_t \rangle_X \\
&\quad + [a_{\infty}(t) - a_n(t)] \langle (-\Delta)^{1\slash 2}u_t^{\infty}, v_t \rangle_X \\
&= \int_{\Omega} [f(u^n) - f(u^{\infty})] u_t \, dx,
\end{split}
\]
which leads to
\[
\begin{split}
&\frac{d}{dt}\left( \Vert u\Vert_{X^{1\slash 2}}^2 + \Vert u\Vert_X^2 + \Vert u_t\Vert_X^2 + 
\Vert v\Vert_{X^{1\slash 2}}^2 + \Vert v_t\Vert_X^2 \right) \\
&= -2\eta\left( \Vert (-\Delta)^{1\slash 4} u_t \Vert_X^2 + 
\Vert (-\Delta)^{1\slash 4} v_t \Vert_X^2 \right) + 
2[a_{\infty}(t) - a_n(t)] \langle (-\Delta)^{1\slash 2}v_t^{\infty}, u_t \rangle_X \\
&\quad + 2[a_n(t) - a_{\infty}(t)] \langle (-\Delta)^{1\slash 2}u_t^{\infty}, v_t \rangle_X + 
2\int_{\Omega} [f(u^n) - f(u^{\infty})] u_t \, dx \\
%&\leq 2\vert a_{\infty}(t) - a_n(t)\vert \, 
%\vert \langle (-\Delta)^{1\slash 2}v_t^{\infty}, u_t \rangle_X \vert + 
%2\vert a_n(t) - a_{\infty}(t)\vert \, 
%\vert \langle (-\Delta)^{1\slash 2}u_t^{\infty}, v_t \rangle_X \vert \\
%&\quad + 2\left\vert \int_{\Omega} [f(u^n) - f(u^{\infty})] u_t \, dx \right\vert \\
%&\leq 2\vert a_{\infty}(t) - a_n(t)\vert \, 
%\Vert v_t^{\infty} \Vert_{X^{1\slash 2}} \Vert u_t\Vert_X 
%+ 2\vert a_n(t) - a_{\infty}(t)\vert \, 
%\Vert u_t^{\infty} \Vert_{X^{1\slash 2}} \Vert v_t\Vert_X \\
%&\quad + 2 \int_{\Omega} \vert (f(u^n) - f(u^{\infty})) u_t \vert \, dx \\
%&\leq  2\vert a_{\infty}(t) - a_n(t)\vert \, 
%\Vert v_t^{\infty} \Vert_{X^{1\slash 2}} \Vert u_t^n - u_t^{\infty} \Vert_X 
%+ 2\vert a_n(t) - a_{\infty}(t)\vert \, 
%\Vert u_t^{\infty} \Vert_{X^{1\slash 2}} \Vert v_t^n - v_t^{\infty} \Vert_X \\
%&\quad + 2 \int_{\Omega} \vert (f(u^n) - f(u^{\infty})) u_t \vert \, dx \\
&\leq 2\vert a_{\infty}(t) - a_n(t)\vert \Vert v_t^{\infty} \Vert_{X^{1\slash 2}} 
\left( \Vert u_t^n \Vert_X + \Vert u_t^{\infty} \Vert_X \right) \\ & \quad  
+ 2\vert a_n(t) - a_{\infty}(t)\vert \Vert u_t^{\infty} \Vert_{X^{1\slash 2}} 
\left( \Vert v_t^n \Vert_X + \Vert v_t^{\infty} \Vert_X \right) 
+ 2 \int_{\Omega} \vert (f(u^n) - f(u^{\infty})) u_t \vert \, dx.
\end{split}
\]

From the regularity theorem of the pullback attractor (see \cite[Theorem 2.3]{BNS}), there exists a constant $C_1>0$, which is independent of both the parameter $\epsilon$ and $t$, such that
\[
\Vert u_t^{\infty} \Vert_{X^{1\slash 2}}, \, 
\Vert v_t^{\infty} \Vert_{X^{1\slash 2}} \leq C_1.
\]

Moreover, due to the theorem on existence of global solution, it follows that the norms
\[
\Vert u_t^n \Vert_X, \, \Vert u_t^{\infty} \Vert_X, \, \Vert v_t^n \Vert_X, \, 
\Vert v_t^{\infty} \Vert_X
\]
are all bounded and this boundedness depends only on the initial data (see the proof of \cite[Theorem 2.1]{BNS}).
Thus, it turns out that we have
\begin{equation}\label{est stability 01}
\begin{split}
&\frac{d}{dt}\left( \Vert u\Vert_{X^{1\slash 2}}^2 + \Vert u\Vert_X^2 + \Vert u_t\Vert_X^2 + 
\Vert v\Vert_{X^{1\slash 2}}^2 + \Vert v_t\Vert_X^2 \right) \\
&\leq C^{\prime}\vert a_n(t) - a_{\infty}(t)\vert + 
2 \int_{\Omega} \vert (f(u^n) - f(u^{\infty})) u_t \vert \, dx,
\end{split}
\end{equation}
for some constant $C^{\prime}>0$.

Now, let us deal with the integral term $\int_{\Omega} \vert (f(u^n) - f(u^{\infty})) u_t \vert \, dx$. In fact, using the Mean Value Theorem, there exists $\sigma\in (0, 1)$ such that
\[
\vert f(u^n) - f(u^{\infty}) \vert = 
\vert f^{\prime} (\sigma u^n + (1 - \sigma)u^{\infty}) \vert \, \vert u^n - u^{\infty} \vert = 
\vert f^{\prime} (\sigma u^n + (1 - \sigma)u^{\infty}) \vert \, \vert u\vert.
\]

On the other hand, since $\frac{\rho - 1}{2\rho} + \frac{1}{2\rho} + \frac12 = 1$, then we apply H\"{o}lder inequality to obtain
\begin{equation}\label{est stability 02}
\begin{split}
&\int_{\Omega} \vert (f(u^n) - f(u^{\infty})) u_t \vert \, dx 
= \int_{\Omega} \vert f^{\prime} (\sigma u^n + (1 - \sigma)u^{\infty}) \vert \, \vert u\vert \, 
\vert u_t\vert \, dx \\
&\leq \Vert f^{\prime} (\sigma u^n + (1 - \sigma)u^{\infty}) \Vert_{L^{\frac{2\rho}{\rho - 1}}(\Omega)} 
\Vert u \Vert_{L^{2\rho}(\Omega)} 
\Vert u_t \Vert_{L^2(\Omega)}.
\end{split}
\end{equation}

The embedding $X^{1\slash 2} \hookrightarrow L^{2\rho}(\Omega)$ holds as $\rho \in \left[\frac{N-1}{N-2}, \frac{N}{N-2}\right)$. Thus, we can use the growth condition \eqref{Gcondition} to obtain a bound for the norm 
$
\Vert f^{\prime} (\sigma u^n + (1 - \sigma)u^{\infty}) \Vert_{L^{\frac{2\rho}{\rho - 1}}(\Omega)}
$
as follows
\begin{equation}\label{est stability 03}
\begin{split}
&\Vert f^{\prime} (\sigma u^n + (1 - \sigma)u^{\infty}) \Vert_{L^{\frac{2\rho}{\rho - 1}}(\Omega)} \leq 
\left( \int_{\Omega} 
[c(1 + \vert\sigma u^n + (1 - \sigma)u^{\infty} \vert^{\rho - 1}) ]^{\frac{2\rho}{\rho - 1}} \, dx 
\right)^{\frac{\rho - 1}{2\rho}} \\
%&\leq c_1\left( \vert\Omega\vert + 
%\int_{\Omega}\vert \sigma u^n + (1 - \sigma)u^{\infty} \vert^{2\rho} \, dx 
%\right)^{\frac{\rho - 1}{2\rho}} \\
&\leq c_1\left( \vert\Omega\vert^{\frac{\rho - 1}{2\rho}} + 
\Vert \sigma u^n + (1 - \sigma)u^{\infty} \Vert_{L^{2\rho}(\Omega)}^{\rho - 1} \right) \\
%&\leq c_3 \left[1 + \left( \Vert \sigma u^n \Vert_{L^{2\rho}(\Omega)} + 
%\Vert (1 - \sigma)u^{\infty} \Vert_{L^{2\rho}(\Omega)} \right)^{\rho - 1} \right] \\
&\leq c_2 \left( 1 + \Vert u^n \Vert_{L^{2\rho}(\Omega)}^{\rho - 1} + 
\Vert u^{\infty} \Vert_{L^{2\rho}(\Omega)}^{\rho - 1} \right) \\
&\leq c_3 \left( 1 + \Vert u^n \Vert_{X^{1\slash 2}}^{\rho - 1} + 
\Vert u^{\infty} \Vert_{X^{1\slash 2}}^{\rho - 1} \right) 
\leq C_0,
\end{split}
\end{equation}
where the norms $\Vert u^n \Vert_{X^{1\slash 2}}$ and $\Vert u^{\infty} \Vert_{X^{1\slash 2}}$ are bounded by the theorem on existence of global solution, and this boundedness depends only on the initial data (see the proof of \cite[Theorem 2.1]{BNS}).

Thus, combining the estimates \eqref{est stability 01}, \eqref{est stability 02} and \eqref{est stability 03}, we obtain
\[
\begin{split}
&\frac{d}{dt}\left( \Vert u\Vert_{X^{1\slash 2}}^2 + \Vert u\Vert_X^2 + \Vert u_t\Vert_X^2 + 
\Vert v\Vert_{X^{1\slash 2}}^2 + \Vert v_t\Vert_X^2 \right) \\
&\leq C^{\prime}\vert a_n(t) - a_{\infty}(t)\vert + 
2C_0 \Vert u \Vert_{L^{2\rho}(\Omega)} 
\Vert u_t \Vert_{L^2(\Omega)} \\
&\leq C^{\prime}\vert a_n(t) - a_{\infty}(t)\vert + 2\hat{C} 
\Vert u\Vert_{X^{1\slash 2}} \Vert u_t \Vert_X \\
%&\leq C^{\prime}\vert a_n(t) - a_{\infty}(t)\vert + 2\hat{C} \left( 
%\frac12 \Vert u\Vert_{X^{1\slash 2}}^2 + 
%\frac12 \Vert u_t \Vert_X^2 \right) \\
&\leq C^{\prime}\vert a_n(t) - a_{\infty}(t)\vert + \hat{C} \left( 
\Vert u\Vert_{X^{1\slash 2}}^2 + \Vert u\Vert_X^2 + \Vert u_t\Vert_X^2 + 
\Vert v\Vert_{X^{1\slash 2}}^2 + \Vert v_t\Vert_X^2 \right),
\end{split}
\]
that is,
\[
\frac{d}{dt} G(t) \leq C^{\prime\prime}\vert a_n(t) - a_{\infty}(t)\vert + 
C^{\prime\prime} G(t), \quad t > \tau,
\]
where $C^{\prime\prime} = \max\{ C^{\prime}, \hat{C} \} > 0$ and
$$
G(t) = \Vert u(t)\Vert_{X^{1\slash 2}}^2 + \Vert u(t)\Vert_X^2 + \Vert u_t(t)\Vert_X^2 + 
\Vert v(t)\Vert_{X^{1\slash 2}}^2 + \Vert v_t(t)\Vert_X^2.
$$

Since
\[
(u^n(\tau), u_t^n(\tau), v^n(\tau), v_t^n(\tau)) = (u_0, u_1, v_0, v_1) = 
(u^{\infty}(\tau), u_t^{\infty}(\tau), v^{\infty}(\tau), v_t^{\infty}(\tau)),
\]
$u = u^n - u^{\infty}$ and $v = v^n - v^{\infty}$, then $G(\tau) = 0$. With this, we can integrate the inequality
\[
\begin{split}
\frac{d}{ds} \left( G(s) e^{-C^{\prime\prime}(s-\tau)} \right) &\leq 
C^{\prime\prime}\vert a_n(s) - a_{\infty}(s)\vert e^{-C^{\prime\prime}(s-\tau)}, \quad \tau\leq s\leq t, \\
%&\leq C^{\prime\prime} \Vert a_n - a_{\infty} \Vert_{L^{\infty}([\tau, t])} 
%e^{-C^{\prime\prime}(s-\tau)}, \quad \tau\leq s\leq t,
\end{split}
\]
from $\tau$ to $t$ in order to obtain
\[
G(t) e^{-C^{\prime\prime}(t-\tau)} 
\leq - \Vert a_n - a_{\infty} \Vert_{L^{\infty}([\tau, t])} e^{-C^{\prime\prime}(t-\tau)} 
+ \Vert a_n - a_{\infty} \Vert_{L^{\infty}([\tau, t])},
\]
that is,
\[
G(t) \leq - \Vert a_n - a_{\infty} \Vert_{L^{\infty}([\tau, t])} + 
\Vert a_n - a_{\infty} \Vert_{L^{\infty}([\tau, t])} e^{C^{\prime\prime}(t-\tau)}.
\]

Hence, as $a_n(s) \xrightarrow{n\rightarrow\infty} a_{\infty}(s)$ uniformly for $s$ in compact subsets of $\mathbb{R}$, it follows that
\[
\begin{split}
\Vert W^n(t) - W^{\infty}(t) \Vert_{Y_0}^2 
%&= \Vert (u^n(t), u_t^n(t), v^n(t), v_t^n(t)) - 
%(u^{\infty}(t), u_t^{\infty}(t), v^{\infty}(t), v_t^{\infty}(t)) \Vert_{Y_0}^2 \\
& = \Vert (u(t), u_t(t), v(t), v_t(t)) \Vert_{Y_0}^2 \\
&= \Vert u(t)\Vert_{X^{1\slash 2}}^2 + \Vert u_t(t)\Vert_X^2 + 
\Vert v(t)\Vert_{X^{1\slash 2}}^2 + \Vert v_t(t)\Vert_X^2 \\
&\leq G(t) \leq 
\Vert a_n - a_{\infty} \Vert_{L^{\infty}([\tau, t])} e^{C^{\prime\prime}(t-\tau)} 
\xrightarrow{n\rightarrow\infty} 0,
\end{split}
\]
in compact subsets of $\mathbb{R}$.
\end{proof}

 Next, we aim to prove that  if $\xi\colon\mathbb{R}\to Y_0$ is a bounded global solution of the problem \eqref{limit system} and $\xi^n\colon\mathbb{R}\to Y_0$ is a bounded global solution of the problem \eqref{problem n}, $n \in \N$, then $\xi(t_n + t) = \xi^n(t)$ for all $t \in \R$. For that, we exhibit some auxiliary results.

According to \cite[Proposition 4.1]{BNS}, for each fixed $t \in \R$ and $\epsilon \in [0,1]$, the operator $\mathcal{A}_{\epsilon}(t)$ defined in \eqref{abstract operator}-\eqref{domain abstract operator} is maximal accretive (i.e., $-\mathcal{A}_{\epsilon}(t)$ is dissipative and $R(I+ \mathcal{A}_{\epsilon}(t)) = Y_0$). 
By the Lumer-Phillips Theorem (\cite[Theorem 4.3]{Pazy}), we have $-\mathcal{A}_{\epsilon}(t)$ is the infinitesimal generator of a $C_0$-semigroup of contractions in $Y_0$ given by $\{e^{-\tau \mathcal{A}_{\epsilon}(t)}: \tau \geq 0\}$, for each $t\in \R$ and $\epsilon \in [0,1]$. Consequently, for each $t\in \R$ and $\epsilon \in [0,1]$, it follows by \cite[Corollary 3.6]{Pazy} that  $\rho(-\mathcal{A}_{\epsilon}(t)) \supset \{\lambda \in \C: {\rm Re}(\lambda) > 0\}$ and
\begin{equation}\label{estimate resolt}
\|(\lambda I + \mathcal{A}_{\epsilon}(t))^{-1}\|_{\mathcal{L}(Y_0)} \leq \dfrac{1}{{\rm Re}(\lambda)},
\end{equation}
whenever $\lambda \in \C$ with ${\rm Re}(\lambda) > 0$.

On the other hand, as proved in \cite[Theorem 4.7]{BNS}, for each fixed $t\in\mathbb{R}$ and $\epsilon \in [0,1]$, the operator $\mathcal{A}_{\epsilon}(t)$ satisfies
\begin{equation}\label{estimate resolt imaginarium}
\vert\beta\vert \Vert (i\beta I + \mathcal{A}_{\epsilon}(t))^{-1} \Vert_{\mathcal{L}(Y_0)} \leq M,
\end{equation}
for all $\beta\in\mathbb{R}$, where $M = 1 + 2\left(2\eta + \frac{2(a_1^2 + 1)}{\eta} + 4\right) + \frac{2a_1 + 2}{\eta} > 0$ is independent of $t\in\mathbb{R}$ and $\epsilon\in [0, 1]$. 

%In other words, the estimate
%\begin{equation}\label{sec estimate 01}
%\Vert (\lambda I + \mathcal{A}_{\epsilon}(t))^{-1} \Vert_{\mathcal{L}(Y_0)} \leq \frac{C}{\vert\lambda\vert}
%\end{equation}
%holds for all $\lambda\in\mathbb{C}$ with $\mbox{Re}(\lambda) = 0$ and $\mbox{Im}(\lambda) \neq 0$, uniformly in $t\in\mathbb{R}$ and $\epsilon\in [0, 1]$.

\begin{lemma}\label{Lemma 3.2 aux AL1} There exists a constant $\mathcal{C}>0$, which  is independent of $t\in\mathbb{R}$ and $\epsilon\in [0, 1]$, such that
$\|(\lambda I + \mathcal{A}_{\epsilon}(t))^{-1}\|_{\mathcal{L}(Y_0)} \leq \dfrac{\mathcal{C}}{1 +|\lambda|}$ uniformly in $t\in\mathbb{R}$ and $\epsilon\in [0, 1],$ for all $\lambda$ in a sector $\Gamma$ whose angle is greater than $\frac{\pi}{2}$.
\end{lemma}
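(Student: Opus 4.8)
The plan is to upgrade the two resolvent bounds already in hand---the right half-plane estimate \eqref{estimate resolt} and the imaginary-axis estimate \eqref{estimate resolt imaginarium}---into a full sectorial estimate, by pushing the resolvent across the imaginary axis through a Neumann series and then patching in a neighborhood of the origin. The guiding principle is that every constant produced along the way must be traceable to $M$, $\eta$, $a_0$, $a_1$, so that it is automatically independent of $t$ and $\epsilon$, which is the entire content of the statement. As a preliminary I would record that $0 \in \rho(-\mathcal{A}_\epsilon(t))$ with a bound $\|\mathcal{A}_\epsilon(t)^{-1}\|_{\mathcal{L}(Y_0)} \le K$ uniform in $t$ and $\epsilon$. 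This comes from solving $\mathcal{A}_\epsilon(t)W = G$ explicitly from the block form \eqref{abstract operator}: the first and third rows give two of the components algebraically, while the second and fourth rows reduce to inverting $A+I$ and $A$; the boundedness of the symbols $A^{1/2}(A+I)^{-1}$, $A(A+I)^{-1}$ and $A^{-1/2}$ on $X$, together with $a_0 \le a_\epsilon(t)\le a_1$, yields the uniform bound $K$.

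Next, for each $\beta \neq 0$ and each $\mu$ with $|\mu - i\beta| < |\beta|/(2M)$, I would factor $\mu I + \mathcal{A}_\epsilon(t) = (i\beta I + \mathcal{A}_\epsilon(t))\bigl(I + (\mu - i\beta)(i\beta I + \mathcal{A}_\epsilon(t))^{-1}\bigr)$ and invert the second factor by a Neumann series; estimate \eqref{estimate resolt imaginarium} then gives $\|(\mu I + \mathcal{A}_\epsilon(t))^{-1}\|_{\mathcal{L}(Y_0)} \le 2M/|\beta|$ on that disk. A direct computation shows that the union over $\beta \in \R$ of these disks is precisely the pair of angular wedges $\{|\arg\mu - \pi/2| < \arcsin(1/(2M))\}\cup\{|\arg\mu + \pi/2| < \arcsin(1/(2M))\}$, so that together with the right half-plane from \eqref{estimate resolt} it covers a sector $\Gamma = \{\mu \neq 0 : |\arg\mu| < \pi/2 + \theta_0\}$ with $\theta_0 = \arcsin(1/(2M)) > 0$, whose half-angle exceeds $\pi/2$ as required.

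It remains to convert these bounds into the homogeneous form and glue the two regimes. On the disk portion $2M/|\beta| \le C/|\mu|$ (using $|\mu| \le (1 + \tfrac{1}{2M})|\beta|$), and on the interior of the right half-plane \eqref{estimate resolt} gives ${\rm Re}(\mu)^{-1} \le C/|\mu|$; hence $\|(\mu I + \mathcal{A}_\epsilon(t))^{-1}\|_{\mathcal{L}(Y_0)} \le C/|\mu|$ throughout $\Gamma\setminus\{0\}$, with $C$ depending only on $M$. Finally, for $|\mu|\le 1/(2K)$ in $\Gamma$ the identity $(\mu I + \mathcal{A}_\epsilon(t))^{-1} = (I + \mu\,\mathcal{A}_\epsilon(t)^{-1})^{-1}\mathcal{A}_\epsilon(t)^{-1}$ and the uniform bound $K$ give $\|(\mu I + \mathcal{A}_\epsilon(t))^{-1}\|\le 2K \le \mathcal{C}/(1+|\mu|)$ since $1+|\mu|$ is bounded there, whereas for $|\mu|\ge 1/(2K)$ the bound $C/|\mu|$ passes to $\mathcal{C}/(1+|\mu|)$ because $|\mu| \ge (1+|\mu|)/2$ on that range. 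Choosing $\mathcal{C}$ as the maximum of the resulting constants gives the claim uniformly in $t$ and $\epsilon$.

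The main obstacle is the geometric covering step: verifying that the Neumann disks around the imaginary axis, together with the right half-plane, genuinely fill a sector of half-angle strictly greater than $\pi/2$, and that the inhomogeneous disk bound $2M/|\beta|$ really upgrades to the homogeneous $C/|\mu|$ form. Beyond that, the delicate part is purely bookkeeping: keeping each of $\theta_0$, $C$, $K$ and $\mathcal{C}$ expressed in terms of $M,\eta,a_0,a_1$ only, so that the estimate is uniform in both $t\in\R$ and $\epsilon\in[0,1]$.
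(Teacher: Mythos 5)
Your proposal is correct and follows essentially the same route as the paper's proof: a Neumann series around the uniformly bounded inverse $\mathcal{A}_{\epsilon}^{-1}(t)$ near the origin, Neumann disks of radius $|\beta|/(2M)$ along the imaginary axis built from \eqref{estimate resolt imaginarium}, the half-plane estimate \eqref{estimate resolt} on an interior subsector, and a final conversion of the homogeneous bound $C/|\lambda|$ into $\mathcal{C}/(1+|\lambda|)$. The only differences are cosmetic bookkeeping: the paper fixes the sector angle via $\tan(\varphi)=4M$ and gets the uniform bound on all of $\Gamma$ from $\|(\lambda I+\mathcal{A}_{\epsilon}(t))^{-1}\|_{\mathcal{L}(Y_0)}\leq \|\mathcal{A}_{\epsilon}(t)(\lambda I+\mathcal{A}_{\epsilon}(t))^{-1}\|_{\mathcal{L}(Y_0)}\,\|\mathcal{A}_{\epsilon}^{-1}(t)\|_{\mathcal{L}(Y_0)}$, whereas you compute the wedge angle as $\arcsin(1/(2M))$ and split at $|\mu|=1/(2K)$.
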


\begin{proof}  Let $t\in\mathbb{R}$ and $\epsilon\in [0, 1]$. By the proof of \cite[Proposition 4.3]{BNS}, there exists a constant $N >0$, independent of $t$ and $\epsilon$, such that $\|\mathcal{A}_{\epsilon}^{-1}(t)\|_{\mathcal{L}(Y_0)} \leq N$. Thus,  $\|\lambda \mathcal{A}_{\epsilon}^{-1}(t)\|_{\mathcal{L}(Y_0)} \leq \frac{1}{2}$ provided $|\lambda| \leq \frac{1}{2N}$, $\lambda \in \C$. Therefore, $(\lambda I +  \mathcal{A}_{\epsilon}(t))^{-1}$ exists for $|\lambda| \leq \frac{1}{2N}$ and
\[
\|(\lambda I +  \mathcal{A}_{\epsilon}(t))^{-1}\|_{\mathcal{L}(Y_0)} \leq \|(I + \lambda \mathcal{A}_{\epsilon}^{-1}(t))^{-1}\|_{\mathcal{L}(Y_0)}\|\mathcal{A}_{\epsilon}^{-1}(t)\|_{\mathcal{L}(Y_0)} \leq 2N.
\]
%\[
%\mathcal{A}_{\epsilon}^{-1}(t) = \mathcal{A}_{\epsilon}^{-1}(t)(\lambda I +  \mathcal{A}_{\epsilon}(t))(\lambda I +  \mathcal{A}_{\epsilon}(t))^{-1}
%\]
%\[
%\mathcal{A}_{\epsilon}^{-1}(t) = (I + \lambda \mathcal{A}_{\epsilon}^{-1}(t))(\lambda I +  \mathcal{A}_{\epsilon}(t))^{-1}
%\] 
%Fix  $t\in\mathbb{R}$ and $\epsilon\in [0, 1]$.  Since $0 \in \rho(\mathcal{A}_{\epsilon}(t))$, there exists $R \in (0, 1)$ such that $\|(\lambda I + \mathcal{A}_{\epsilon}(t))^{-1}\|_{\mathcal{L}(Y_0)} \leq 1$ whenever $|\lambda|\leq R$, $\lambda \in \C$.
Consequently,
\begin{equation}\label{limit on a bounded p1}
\|(\lambda I + \mathcal{A}_{\epsilon}(t))^{-1}\|_{\mathcal{L}(Y_0)} \leq \frac{1}{|\lambda|},
\end{equation}
whenever $|\lambda|\leq \frac{1}{2N}$. 

Now, let $\varphi \in(0, \frac{\pi}{2})$ be such that $\tan(\varphi) = 4M$, where $M>0$ comes from \eqref{estimate resolt imaginarium}. Set $\Gamma_1:= \{\lambda \in \C \colon |\lambda| \geq \frac{1}{2N} \; \text{and} \; |arg(\lambda)| \leq \varphi\}$.

\vspace{.2cm}

\textbf{Claim 1:} $\|(\lambda I + \mathcal{A}_{\epsilon}(t))^{-1}\|_{\mathcal{L}(Y_0)}  \leq \dfrac{\frac{2}{1-\sin(\varphi)}}{|\lambda|}$ for all $\lambda \in \Gamma_1$.

\vspace{.2cm}

Indeed, let $\lambda \in \Gamma_1$. Then
\[
|\lambda| \leq |{\rm Re}(\lambda)| + |{\rm Im}(\lambda)| \leq  |{\rm Re}(\lambda)| + \sin(\varphi)|\lambda|.
\]
Consequently, it follows by \eqref{estimate resolt} that
\[
\|(\lambda I + \mathcal{A}_{\epsilon}(t))^{-1}\|_{\mathcal{L}(Y_0)} \leq \frac{1}{{\rm Re}(\lambda)} \leq \frac{\frac{2}{1-\sin(\varphi)}}{|\lambda|}.
\]

\vspace{.2cm} 

Define $\Gamma_2 =  \bigcup_{t\neq 0}\{\mu \in \C \colon |\mu - it| < \frac{|t|}{2M}\}$.

\textbf{Claim 2:}   If $\mu\in \Gamma_2$, then $\Vert (\mu I + \mathcal{A}_{\epsilon}(t))^{-1} \Vert_{\mathcal{L}(Y_0)} \leq 
\dfrac{2M+1}{\vert\mu\vert}$.

\vspace{.2cm} 

In fact, let $\mu\in\Gamma_2$ and $t \in \R$, $t \neq 0$, be such that $\vert\mu - it\vert < \frac{\vert t\vert}{2M}$.  Using \eqref{estimate resolt imaginarium}, we deduce
\[
\begin{split}
\Vert (\mu - it) (itI + \mathcal{A}_{\epsilon}(t))^{-1} \Vert_{\mathcal{L}(Y_0)} 
&= \vert\mu - it\vert \Vert (itI + \mathcal{A}_{\epsilon}(t))^{-1} \Vert_{\mathcal{L}(Y_0)} < \frac12
\end{split}
\]
and, thanks to the Neumann series, we obtain
\[
\Vert [ (\mu - it) (itI + \mathcal{A}_{\epsilon}(t))^{-1} + I ]^{-1} \Vert_{\mathcal{L}(Y_0)} 
\leq 2.
\]
Now, using \eqref{estimate resolt imaginarium} again, we get
\[
\begin{split}
\Vert (\mu I + \mathcal{A}_{\epsilon}(t))^{-1} \Vert_{\mathcal{L}(Y_0)} &= \Vert [ (\mu - it) (itI + \mathcal{A}_{\epsilon}(t))^{-1} + I ]^{-1} 
(itI + \mathcal{A}_{\epsilon}(t))^{-1} \Vert_{\mathcal{L}(Y_0)} \\
&\leq \frac{2M}{\vert t\vert} \leq \left( \frac{\vert\mu - it\vert}{\vert t\vert} + 1\right) \frac{2M}{\vert\mu\vert} <  \frac{2M+1}{\vert\mu\vert}.
\end{split}
\]

Since $\partial\{\lambda \in \Gamma_1: |arg(\lambda)| = \varphi\} = \left\{\left(\frac{|t|}{4M}, t \right): t \in \R\right\} \subset \Gamma_2$, it follows by \eqref{limit on a bounded p1}, Claim 1 and Claim 2 that we have construct a sector $\Gamma$ whose angle is greater than $\frac{\pi}{2}$ such that
\begin{equation}\label{teo new part1}
\Vert (\lambda I + \mathcal{A}_{\epsilon}(t))^{-1} \Vert_{\mathcal{L}(Y_0)} \leq 
\frac{\tilde{C}}{\vert\lambda\vert},
\end{equation}
where $\tilde{C} = \max\{2M+1, \frac{2}{1-\sin(\varphi)}\} >0$ is independent of $t\in\mathbb{R}$ and $\epsilon\in [0, 1]$.

\vspace{.2cm}

Lastly, for a given $\lambda \in \Gamma$, we have
\begin{equation}\label{lim A(I)}
\|\mathcal{A}_{\epsilon}(t)(\lambda I + \mathcal{A}_{\epsilon}(t))^{-1}\|_{\mathcal{L}(Y_0)}  \leq 1 + |\lambda|\Vert (\lambda I + \mathcal{A}_{\epsilon}(t))^{-1} \Vert_{\mathcal{L}(Y_0)}  \leq 1 + \tilde{C},
\end{equation}
which together with the uniform boundedness of  $\|\mathcal{A}_{\epsilon}^{-1}(t)\|_{\mathcal{L}(Y_0)}$, we conclude that
\begin{equation}\label{teo new part2}
 \Vert (\lambda I + \mathcal{A}_{\epsilon}(t))^{-1} \Vert_{\mathcal{L}(Y_0)}  \leq \| \mathcal{A}_{\epsilon}(t)(\lambda I + \mathcal{A}_{\epsilon}(t))^{-1}\|_{\mathcal{L}(Y_0)}  \| \mathcal{A}_{\epsilon}^{-1}(t)\|_{\mathcal{L}(Y_0)} \leq K,
\end{equation}
for some constant $K>0$ independent of $t\in\mathbb{R}$ and $\epsilon\in [0, 1]$.

From \eqref{teo new part1} and  \eqref{teo new part2}, 
\[
(1+ |\lambda|)\|(\lambda I + \mathcal{A}_{\epsilon}(t))^{-1}\|_{\mathcal{L}(Y_0)}  \leq \tilde{C} + K := \mathcal{C},
\]
for all $\lambda \in \Gamma$. 
\end{proof}

 \begin{remark}\label{R1 first} \rm Since the estimates \eqref{lim A(I)} and \eqref{teo new part2} do not depend on $\epsilon$, then there exist $\delta > 0$  and $K > 0$ such that:
 \begin{enumerate}
 \item[$(i)$] $\|e^{-\tau\mathcal{A}_{\epsilon}(t)}\|_{\mathcal{L}(Y_0)} \leq Ke^{-\delta\tau}$, for all $\tau \geq 0$ and $t \in \R$;
 \item[$(ii)$] $\|\mathcal{A}_{\epsilon}(t)e^{-\tau\mathcal{A}_{\epsilon}(t)}\|_{\mathcal{L}(Y_0)} \leq \dfrac{Ke^{-\delta\tau}}{\tau}$, for all $\tau>0$ and $t \in \R$.
 \end{enumerate}
 \end{remark}

 \begin{lemma}\label{Lemma 3.3 aux AL2}  For any $T>0$, there are constants $C > 0$ and $0 < \beta \leq 1$, both independent of $T$ and $\epsilon$, such that
\[
\Vert [ \mathcal{A}_{\epsilon}(t) - \mathcal{A}_{\epsilon}(s) ] 
\mathcal{A}_{\epsilon}^{-1}(\tau) \Vert_{\mathcal{L}(Y_0)} \leq 
C\vert t-s\vert^{\beta}
\]
for every $t, s, \tau \in [-T, T]$. In particular, $\Vert \mathcal{A}_{\epsilon}(t) 
\mathcal{A}_{\epsilon}^{-1}(\tau) \Vert_{\mathcal{L}(Y_0)} \leq 
1 + 2a_1$, for all $t, \tau \in \R$, where the constant $a_1$ comes from \eqref{function a is bounded}.
\end{lemma}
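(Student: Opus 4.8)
The plan is to exploit the fact that the only $t$-dependence in the matrix operator $\mathcal{A}_{\epsilon}(t)$ of \eqref{abstract operator} is carried by the scalar coefficient $a_{\epsilon}(t)$. Upon subtracting, all the constant entries cancel and one is left with
\[
[\mathcal{A}_{\epsilon}(t) - \mathcal{A}_{\epsilon}(s)]\begin{bmatrix} p\\ q\\ r\\ s\end{bmatrix}
= [a_{\epsilon}(t) - a_{\epsilon}(s)]\begin{bmatrix} 0\\ A^{1\slash 2}s\\ 0\\ -A^{1\slash 2}q\end{bmatrix},
\]
so the difference operator is a simple perturbation whose magnitude is governed entirely by the scalar $|a_{\epsilon}(t) - a_{\epsilon}(s)|$, acting only on the second and fourth (velocity) slots.

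First I would identify exactly which components of $\mathcal{A}_{\epsilon}^{-1}(\tau)$ actually feed into this composition. Writing $\mathcal{A}_{\epsilon}^{-1}(\tau)W = (p, q, r, s)^{T}$ for $W = (u, v, w, z)^{T}\in Y_0$ is equivalent to $\mathcal{A}_{\epsilon}(\tau)(p, q, r, s)^{T} = W$, and reading off the first and third rows of \eqref{abstract operator} (which do \emph{not} involve $a_{\epsilon}$) gives at once $q = -u$ and $s = -w$, independently of $\tau$ and $\epsilon$. These are precisely the two entries that enter the formula above, so no full inversion of $\mathcal{A}_{\epsilon}(\tau)$ is needed. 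Substituting, I obtain
\[
[\mathcal{A}_{\epsilon}(t) - \mathcal{A}_{\epsilon}(s)]\,\mathcal{A}_{\epsilon}^{-1}(\tau)W
= [a_{\epsilon}(t) - a_{\epsilon}(s)]\begin{bmatrix} 0\\ -A^{1\slash 2}w\\ 0\\ A^{1\slash 2}u\end{bmatrix}.
\]

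Next I would take the $Y_0 = X^{1\slash 2}\times X\times X^{1\slash 2}\times X$ norm. Since $u, w \in X^{1\slash 2} = D(A^{1\slash 2})$, one has $\|A^{1\slash 2}u\|_X \leq \|u\|_{X^{1\slash 2}}$ and $\|A^{1\slash 2}w\|_X \leq \|w\|_{X^{1\slash 2}}$ (indeed equality for the natural $X^{1\slash 2}$-norm), whence
\[
\|[\mathcal{A}_{\epsilon}(t) - \mathcal{A}_{\epsilon}(s)]\,\mathcal{A}_{\epsilon}^{-1}(\tau)W\|_{Y_0}^2
= |a_{\epsilon}(t) - a_{\epsilon}(s)|^2\big(\|A^{1\slash 2}w\|_X^2 + \|A^{1\slash 2}u\|_X^2\big)
\leq |a_{\epsilon}(t) - a_{\epsilon}(s)|^2\,\|W\|_{Y_0}^2.
\]
Taking the supremum over $\|W\|_{Y_0}\leq 1$ and invoking the $(\beta, C)$-H\"older continuity \eqref{hol-a} then yields $\|[\mathcal{A}_{\epsilon}(t) - \mathcal{A}_{\epsilon}(s)]\,\mathcal{A}_{\epsilon}^{-1}(\tau)\|_{\mathcal{L}(Y_0)} \leq |a_{\epsilon}(t) - a_{\epsilon}(s)| \leq C|t-s|^{\beta}$, with $C$ and $\beta$ exactly those of \eqref{hol-a}, which are independent of $T$ and $\epsilon$.

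For the final claim I would write $\mathcal{A}_{\epsilon}(t)\mathcal{A}_{\epsilon}^{-1}(\tau) = [\mathcal{A}_{\epsilon}(t) - \mathcal{A}_{\epsilon}(\tau)]\mathcal{A}_{\epsilon}^{-1}(\tau) + I$ and bound the first term by the computation just performed, this time estimating $|a_{\epsilon}(t) - a_{\epsilon}(\tau)| \leq a_{\epsilon}(t) + a_{\epsilon}(\tau) \leq 2a_1$ via \eqref{function a is bounded}, to conclude $\|\mathcal{A}_{\epsilon}(t)\mathcal{A}_{\epsilon}^{-1}(\tau)\|_{\mathcal{L}(Y_0)} \leq 1 + 2a_1$ for all $t, \tau \in \mathbb{R}$. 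There is no serious obstacle here: the entire argument rests on the structural observation that $\mathcal{A}_{\epsilon}(t) - \mathcal{A}_{\epsilon}(s)$ isolates the coefficient difference and that the two inverse components it meets are trivially $-u$ and $-w$. The only point requiring a little care is matching the nonzero entries of the difference operator with the correct slots of $Y_0$, so that the $A^{1\slash 2}$ factors land on the $X^{1\slash 2}$-regular components $u$ and $w$ rather than on $v$ or $z$, which is what makes the $Y_0$-bound close.
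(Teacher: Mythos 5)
Your proof is correct and follows essentially the same route as the paper: isolate the scalar difference $a_{\epsilon}(t)-a_{\epsilon}(s)$ in $\mathcal{A}_{\epsilon}(t)-\mathcal{A}_{\epsilon}(s)$, compute its action on $\mathcal{A}_{\epsilon}^{-1}(\tau)W$ to obtain $[a_{\epsilon}(t)-a_{\epsilon}(s)]\,(0,\,-A^{1/2}w,\,0,\,A^{1/2}u)^{T}$, and conclude via the H\"older condition \eqref{hol-a}. The only (harmless) variation is that you read the two needed components $q=-u$, $s=-w$ directly off the first and third rows of $\mathcal{A}_{\epsilon}(\tau)$ instead of quoting the full explicit inverse matrix as the paper does, and you make explicit the decomposition $\mathcal{A}_{\epsilon}(t)\mathcal{A}_{\epsilon}^{-1}(\tau)=I+[\mathcal{A}_{\epsilon}(t)-\mathcal{A}_{\epsilon}(\tau)]\mathcal{A}_{\epsilon}^{-1}(\tau)$ for the bound $1+2a_1$, which the paper leaves implicit.
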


\begin{proof}
Let $T>0$ be fixed and arbitrary, and let $t, s, \tau \in [-T, T]$. 
Since 
\[
 \mathcal{A}_{\epsilon}(t) - \mathcal{A}_{\epsilon}(s) \, \!\!=\!\! \,
[a_{\epsilon}(t) - a_{\epsilon}(s)]
\begin{bmatrix}
0 & 0 & 0 & 0 \\
0 & 0 & 0 & A^{1\slash 2} \\
0 & 0 & 0 & 0 \\
0 & -A^{1\slash 2} & 0 & 0
\end{bmatrix} 
\]
and
\[
\small
\mathcal{A}_{\epsilon}^{-1}(\tau) =
\begin{bmatrix}
\eta A^{1\slash 2} (A+I)^{-1} & (A+I)^{-1} & a_{\epsilon}(\tau)A^{1\slash 2} (A+I)^{-1} & 0 \\
-I                                           & 0                & 0                                          & 0 \\
- a_{\epsilon}(\tau) A^{-1\slash 2}                 & 0               & \eta A^{-1\slash 2}            & A^{-1} \\
0                                            & 0               & -I                                         & 0
\end{bmatrix}, 
\]
we obtain
\[
\begin{split}
\left\Vert [\mathcal{A}_{\epsilon}(t) - \mathcal{A}_{\epsilon}(s) ] 
\mathcal{A}_{\epsilon}^{-1}(\tau) 
\begin{bmatrix}
u\\ v\\ w\\ z
\end{bmatrix}
\right\Vert_{Y_0} 
&
%= \vert a_{\epsilon}(t) - a_{\epsilon}(s) \vert 
%\left\Vert
%\begin{bmatrix}
%0\\ -A^{1\slash 2}w\\ 0\\ A^{1\slash 2}u
%\end{bmatrix}
%\right\Vert_{Y_0} \\
= \vert a_{\epsilon}(t) - a_{\epsilon}(s) \vert 
\left( \left\Vert -A^{1\slash 2}w \right\Vert_X 
+ \left\Vert A^{1\slash 2}u \right\Vert_X \right) \\
&\leq C\vert t-s\vert^{\beta} 
\left( \left\Vert u\right\Vert_{X^{1\slash 2}} + \left\Vert v\right\Vert_X 
+ \left\Vert w\right\Vert_{X^{1\slash 2}} 
+ \left\Vert z\right\Vert_X \right) \\
&= C\vert t-s\vert^{\beta} 
\left\Vert
\begin{bmatrix}
u\\ v\\ w\\ z
\end{bmatrix} 
\right\Vert_{Y_0}
\end{split}
\]
for every $[u\,\, v\,\, w\,\, z]^T \in Y_0$,
%$
%\begin{bmatrix}
%u\\ v\\ w\\ z
%\end{bmatrix} 
%\in Y_0,$ 
where we have used \eqref{hol-a}. 
Hence,
\[
\Vert [\mathcal{A}_{\epsilon}(t) - \mathcal{A}_{\epsilon}(s)] 
 \mathcal{A}_{\epsilon}^{-1}(\tau) 
\Vert_{\mathcal{L}(Y_0)} \leq C\vert t-s\vert^{\beta}
\]
for every $t, s, \tau \in [-T, T],$ and the result is proved.
\end{proof}

\begin{proposition}\label{Prop 3.5 rel} Let $\xi\colon\mathbb{R}\to Y_0$ be a bounded global solution of the problem \eqref{limit system} and $\xi^n\colon\mathbb{R}\to Y_0$ be a bounded global solution of the problem \eqref{problem n}, $n \in \N$. Then $\xi(t_n + t) = \xi^n(t)$ for all $t \in \R$.
\end{proposition}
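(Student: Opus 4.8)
The plan is to exploit the fact that problem \eqref{problem n} is precisely the limit problem \eqref{limit system} shifted in time by $t_n$. Indeed, by \eqref{a_n def} we have $a_n(t) = a_0(t_n + t)$, so in the abstract formulation \eqref{edp abstrata} the operator from \eqref{abstract operator} satisfies $\mathcal{A}_n(t) = \mathcal{A}_0(t_n + t)$ for all $t \in \R$, while the nonlinearity $F$ in \eqref{nonlinearity F} is autonomous. Thus I would first show that the time-shift of $\xi$ solves \eqref{problem n}, and then identify this shift with $\xi^n$.

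For the first step, set $\eta(t) := \xi(t_n + t)$ for $t \in \R$. Since $\xi$ solves the limit equation, $\xi'(s) = -\mathcal{A}_0(s)\xi(s) + F(\xi(s))$, differentiating gives
\[
\eta'(t) = \xi'(t_n + t) = -\mathcal{A}_0(t_n + t)\xi(t_n + t) + F(\xi(t_n + t)) = -\mathcal{A}_n(t)\eta(t) + F(\eta(t)),
\]
so $\eta$ solves \eqref{problem n}. (Equivalently, one may argue through the variation-of-constants formula: Lemma \ref{Lemma 3.2 aux AL1}, Remark \ref{R1 first} and Lemma \ref{Lemma 3.3 aux AL2} furnish exactly the uniform sectorial and H\"older estimates needed to construct the linear evolution process associated with $\mathcal{A}_\epsilon(\cdot)$; its uniqueness yields the shift relation $U_n(t, s) = U_0(t_n + t, t_n + s)$, and a change of variables in the integral representation of $\xi$ produces the one for $\eta$.) Moreover $\eta$ is bounded and globally defined because $\xi$ is, since $\sup_{t \in \R}\|\eta(t)\|_{Y_0} = \sup_{t \in \R}\|\xi(t)\|_{Y_0} < \infty$. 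Hence $\eta$ is a bounded global solution of \eqref{problem n}.

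For the second step I would identify $\eta$ with $\xi^n$. Both are bounded global solutions of \eqref{problem n}, so it suffices to know that the bounded global solution of \eqref{problem n} passing through a prescribed point at a given instant is unique. Forward uniqueness is immediate from the well-posedness in Theorem \ref{global-sol}; the delicate point is backward uniqueness, which propagates the coincidence to all negative times. This may be obtained from the parabolic (analytic) character of the equation, as $-\mathcal{A}_\epsilon(t)$ generates an analytic semigroup on $Y_0$ by Lemma \ref{Lemma 3.2 aux AL1} and therefore the evolution process is injective; alternatively, one may run an energy estimate for the difference $\eta - \xi^n$ in the spirit of Theorem \ref{stability}, where now the coefficient-mismatch term disappears because both functions solve \eqref{problem n} with the same $a_n$, leaving a Gronwall inequality
\[
\frac{d}{dt}\|\eta(t) - \xi^n(t)\|_{Y_0}^2 \leq C\,\|\eta(t) - \xi^n(t)\|_{Y_0}^2
\]
to be combined with the uniform boundedness of the two solutions on $\R$.

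I expect the identification/uniqueness step to be the main obstacle, rather than the shift equivariance, which is essentially a change of variables. In particular, making the backward-in-time coincidence rigorous is where the analytic-semigroup structure, together with the uniform-in-$(t,\epsilon)$ estimates of Lemma \ref{Lemma 3.2 aux AL1}, must be invoked with care.
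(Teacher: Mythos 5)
Your first step is correct and is essentially the paper's own argument: the paper establishes the shift identity at the linear level, $L(t+t_n,s+t_n)=L_n(t,s)$, by writing both linear processes through Sobolevski\u i's integral equation (the hypotheses being supplied exactly by Lemma \ref{Lemma 3.2 aux AL1}, Remark \ref{R1 first} and Lemma \ref{Lemma 3.3 aux AL2}) and applying Gronwall, which is precisely your parenthetical variation-of-constants route, based on the same observation $\mathcal{A}_n(s)=\mathcal{A}_0(s+t_n)$.

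The genuine gap is in your second step. You reduce the identification of $\eta:=\xi(t_n+\cdot)$ with $\xi^n$ to uniqueness of the bounded global solution of \eqref{problem n} \emph{passing through a prescribed point at a given instant}, but the hypotheses of the proposition provide no instant at which $\eta$ and $\xi^n$ coincide: unlike Theorem \ref{stability}, where $W^n$ and $W^{\infty}$ share the same initial datum at time $\tau$, here $\xi$ and $\xi^n$ are merely bounded global solutions of their respective problems. Without an anchor point, forward uniqueness and backward uniqueness/injectivity have nothing to propagate, and your fallback differential inequality only yields $\Vert\eta(t)-\xi^n(t)\Vert_{Y_0}^2\leq \Vert\eta(\tau)-\xi^n(\tau)\Vert_{Y_0}^2\,e^{C(t-\tau)}$, whose right-hand side blows up as $\tau\to-\infty$; boundedness of the two solutions does not rescue this. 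In fact, no uniqueness argument can identify two \emph{arbitrary} bounded global solutions: any two distinct equilibria $e_i^{\ast}\neq e_j^{\ast}$ solve \eqref{problem n} for every $n$ (being time independent, $u_t=v_t=0$ annihilates the coupling terms), and $e_i^{\ast}\neq e_j^{\ast}$ shows the conclusion fails unless $\xi^n$ is tied to $\xi$. The paper makes this tie through the backward variation-of-constants representation of bounded global solutions, $\xi^n(t)=\int_{-\infty}^{t}L_n(t,s)F(\xi^n(s))\,ds$ (from the proof of \cite[Theorem 2.3]{BNS}), shows that $\xi(t_n+\cdot)$ satisfies the same fixed-point equation, and concludes with the Lipschitz estimate for $F$, the smoothing bound $\Vert L(t,s)\Vert_{\mathcal{L}(Y_{\alpha-1},Y_0)}\leq c(\alpha)(t-s)^{\alpha-1}$, and a Gronwall-type argument; this representation from $-\infty$, which encodes how $\xi^n$ is selected, is the ingredient your proposal is missing (and indeed in Theorem \ref{structure limit PA} the proposition is applied with $\xi^n$ being exactly the shift of $\xi$, i.e.\ its real content is your step one). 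As a side remark, your claim that analyticity of the semigroups generated by $-\tilde{\mathcal{A}}_{\epsilon}(t)$ (or $-\mathcal{A}_{\epsilon}(t)$) automatically makes the nonlinear evolution process injective is also not justified in the non-autonomous setting, where the coefficients are only H\"older in $t$ and backward uniqueness requires a separate argument; but this point is moot given the missing coincidence point.
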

\begin{proof} Consider the homogeneous problems
\begin{equation}\label{limit system homo a_0}
\begin{cases}
u_{tt} - \Delta u + u + \eta(-\Delta)^{1\slash 2}u_t + a_0(t)(-\Delta)^{1\slash 2}v_t = 0, & (x, t) \in\Omega\times (\tau, \infty), \\
v_{tt} - \Delta v + \eta(-\Delta)^{1\slash 2}v_t - a_0(t)(-\Delta)^{1\slash 2}u_t = 0, & (x, t) \in\Omega\times (\tau, \infty), \\
u = v = 0, & (x, t) \in\partial\Omega\times (\tau, \infty), \\
(u(\tau), u_t(\tau), v(\tau), v_t(\tau)) = (u_0, u_1, v_0, v_1),
\end{cases}
\end{equation}
and
\begin{equation}\label{limit system homo a_n}
\begin{cases}
u_{tt} - \Delta u + u + \eta(-\Delta)^{1\slash 2}u_t + a_n(t)(-\Delta)^{1\slash 2}v_t = 0, & (x, t) \in\Omega\times (\tau, \infty), \\
v_{tt} - \Delta v + \eta(-\Delta)^{1\slash 2}v_t - a_n(t)(-\Delta)^{1\slash 2}u_t = 0, & (x, t) \in\Omega\times (\tau, \infty), \\
u = v = 0, & (x, t) \in\partial\Omega\times (\tau, \infty), \\
(u(\tau), u_t(\tau), v(\tau), v_t(\tau)) = (u_0, u_1, v_0, v_1) \in Y_0,
\end{cases}
\end{equation}

Let $\{L(t,s) \colon t \geq s \in \R\}$ and $\{L_n(t,s) \colon t \geq s \in \R\}$ be the linear evolution processes in $Y_0$ associated with  \eqref{limit system homo a_0} and \eqref{limit system homo a_n}, respectively. Since Lemmas \ref{Lemma 3.2 aux AL1}  and \ref{Lemma 3.3 aux AL2} hold, it follows by \cite{Sobo} that
\[
L(t,s)  =  e^{-(t-s)\mathcal{A}_0(s)} + \int_{s}^{t} L(t, \tau)[\mathcal{A}_0(s) - \mathcal{A}_0(\tau)]e^{-(\tau-s)\mathcal{A}_0(s)}d\tau,
\]
and
\[
L_n(t,s) = e^{-(t-s)\mathcal{A}_n(s)} + \int_{s}^{t} L_n(t, \tau)[\mathcal{A}_n(s) - \mathcal{A}_n(\tau)]e^{-(\tau-s)\mathcal{A}_n(s)}d\tau,
\]
where  \begin{equation*}\label{abstract operator with n}
\begin{split}
\mathcal{A}_{n}(t) &=\!\!
\begin{bmatrix}
0     & -I                           & 0 & 0 \\
A+I & \eta A^{1\slash 2} & 0 & a_{n}(t) A^{1\slash 2} \\
0     & 0                            & 0 & -I \\
0     & - a_{n}(t) A^{1\slash 2} & A & \eta A^{1\slash 2}
\end{bmatrix},
\!\! \\
\end{split}
\end{equation*}
with $t \in \R$ and $n \in \N\cup\{0\}$. Moreover,
\[
\xi(t) = L(t, s)\xi(s) +  \int_{s}^t L(t,\tau)F(\xi(\tau))d\tau, \quad t, s  \in \R,
\]
and
\[
\xi^n(t) =  L_n(t, s)\xi(s) +  \int_{s}^t L_n(t,\tau)F(\xi^n(\tau))d\tau, \quad t, s \in \R, \; n \in \N.
\]
According to the proof of  \cite[Theorem 2.3]{BNS}, we conclude that
\[
\xi(t) = \int_{-\infty}^t L(t,s)F(\xi(s))ds, \quad t \in \R,
\]
and
\[
\xi^n(t) = \int_{-\infty}^t L_n(t,s)F(\xi^n(s))ds, \quad t \in \R, \; n \in \N,
\]
with $F$ is given by \eqref{nonlinearity F}. Furthermore, note that $\mathcal{A}_0(s+t_n)  = \mathcal{A}_n(s)$ for all $s \in \R$ and $n \in \N$. Consequently, we deduce
\[
\begin{split} 
 & L(t+t_n,s+t_n) =\\ 
 & = e^{-(t-s)\mathcal{A}_0(s+t_n)} + \int_{s+t_n}^{t+t_n} L(t+t_n, \tau)[\mathcal{A}_0(s+t_n) - \mathcal{A}_0(\tau)]e^{-(\tau-s-t_n)\mathcal{A}_0(s+t_n)}d\tau\\
 & = e^{-(t-s)\mathcal{A}_n(s)} + \int_{s}^{t} L(t+t_n, \tau+t_n)[\mathcal{A}_0(s+t_n) - \mathcal{A}_0(\tau+t_n)]e^{-(\tau-s)\mathcal{A}_0(s+t_n)}d\tau\\
 & = e^{-(t-s)\mathcal{A}_n(s)} + \int_{s}^{t} L(t+t_n, \tau+t_n)[\mathcal{A}_n(s) - \mathcal{A}_n(\tau)]e^{-(\tau-s)\mathcal{A}_n(s)}d\tau,
\end{split} 
\]
for all $t, s \in \R$ and $n\in \N$. Using Remark \ref{R1 first}, Lemma \ref{Lemma 3.3 aux AL2} and the Gronwall inequality, we conclude that $L(t+t_n,s+t_n)  = L_n(t, s)$ for all $t \geq s \in \R$.  In conclusion,
\[
\xi(t+t_n) = \int_{-\infty}^{t+t_n} L(t+t_n,s)F(\xi(s))ds = \int_{-\infty}^{t} L(t+t_n,s+t_n)F(\xi(s+t_n))ds 
\]
\[
 =\int_{-\infty}^{t} L_n(t,s)F(\xi(s+t_n))ds,
\]
for all $t \in \R$ and $n\in \N$. Since (see \cite[Proposition 4.9]{BNS})
\[
\|F(z_1) - F(z_2)\|_{Y_{\alpha -1}} \leq c\|z_1-z_2\|_{Y_0}(1 + \|z_1\|_{Y_0}^{\rho -1} + \|z_2\|_{Y_0}^{\rho -1}), \quad z_1, z_2 \in Y_0, \quad \alpha \in (0, 1),
\]
where $Y_{\alpha -1} = [Y_{-1}, Y_{0}]_{\alpha}$ (complex interpolation), and $\|L(t,s)\|_{\mathcal{L}(Y_{\alpha-1}, Y_{0})} \leq c(\alpha)(t-s)^{\alpha-1}$ (by choosing the phase space $Z_0 = Y_{-1}$, $Z_{\alpha} = Y_{\alpha -1}$ and the closed extension of $\mathcal{A}_{\epsilon}(t)$ to $Y_0$, and take into account that \cite[Proposition 4.4.]{BNS} holds, we have $\|L(t,s)\|_{\mathcal{L}(Y_{\alpha-1}, Y_{0})} = \|L(t,s)\|_{\mathcal{L}(Z_{\alpha}, Z_{1})}$), it follows again by using the  Gronwall inequality,
that $\xi(t_n + t) = \xi^n(t)$ for all $t \in \R$.
\end{proof}

\begin{corollary}
If $\xi\colon\mathbb{R}\to Y_0$ and  $\xi^{\infty}\colon\mathbb{R}\to Y_0$ are bounded global solutions associated to the problems \eqref{problem n} and \eqref{problem with limit function}, respectively, starting in their pullback attractors, then
\[
\Vert \xi(t+t_n) - \xi^{\infty}(t) \Vert_{Y_0}^2 \xrightarrow{n\rightarrow\infty} 0
\]
in compact subsets of $\mathbb{R}$.
\end{corollary}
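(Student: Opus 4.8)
The plan is to derive this corollary by coupling the two preceding results: Proposition \ref{Prop 3.5 rel}, which converts the time-shifts of $\xi$ into genuine solutions of \eqref{problem n}, and Theorem \ref{stability}, which compares those solutions with a solution of the limit-function problem \eqref{problem with limit function}. Throughout I read $\xi$ as a bounded global solution of the limit problem \eqref{limit system} lying in $\{\mathbb{A}_0(t)\colon t\in\mathbb{R}\}$, since that is the hypothesis under which Proposition \ref{Prop 3.5 rel} is available and it is what makes the shift $\xi(t_n+\cdot)$ meaningful.

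First I would invoke Proposition \ref{Prop 3.5 rel} to record the identity $\xi(t_n+t)=\xi^n(t)$ for every $t\in\mathbb{R}$, where $\xi^n$ solves \eqref{problem n}. Because $a_n(t)=a_0(t_n+t)$, the evolution process of \eqref{problem n} is the $t_n$-translate of that of \eqref{limit system}; this is exactly the relation $L(t+t_n,s+t_n)=L_n(t,s)$ established inside the proof of Proposition \ref{Prop 3.5 rel}, and it transports attractors by $\mathbb{A}_n(t)=\mathbb{A}_0(t_n+t)$. Consequently $\xi^n(t)=\xi(t_n+t)\in\mathbb{A}_0(t_n+t)=\mathbb{A}_n(t)$, so $\xi^n$ is a bounded global solution of \eqref{problem n} starting in its pullback attractor. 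I would then run the energy identity of Theorem \ref{stability} on the difference $\xi^n-\xi^{\infty}$: the uniform regularity bound $\Vert u_t^{\infty}\Vert_{X^{1\slash 2}},\Vert v_t^{\infty}\Vert_{X^{1\slash 2}}\le C_1$ and the subcritical estimate \eqref{est stability 03} control the coupling and nonlinear terms exactly as there, producing a Gronwall inequality of the form $\frac{d}{dt}G(t)\le C''\vert a_n(t)-a_{\infty}(t)\vert+C''G(t)$ with $G(t)=\Vert \xi^n(t)-\xi^{\infty}(t)\Vert_{Y_0}^2$. It remains to integrate this inequality and control the boundary term, which is the delicate point addressed next.

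The step that needs genuine care—and that I expect to be the real obstacle—is the boundary term at the reference time. In Theorem \ref{stability} one has $G(\tau)=0$ because the two trajectories share a datum at $\tau$, whereas here $\xi^n(\tau)=\xi(t_n+\tau)$ varies with $n$ and there is no reason for it to equal $\xi^{\infty}(\tau)$. The honest way to close the estimate is therefore to keep the boundary term, integrating $\frac{d}{ds}\big(G(s)e^{-C''(s-\tau)}\big)\le C''\vert a_n(s)-a_{\infty}(s)\vert e^{-C''(s-\tau)}$ over $[\tau,t]$ with $t\in[-T,T]$ and $\tau\le -T$ fixed, to obtain $G(t)\le \big(G(\tau)+\Vert a_n-a_{\infty}\Vert_{L^{\infty}([\tau,t])}\big)e^{C''(t-\tau)}$. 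On one hand $\Vert a_n-a_{\infty}\Vert_{L^{\infty}([\tau,t])}\to 0$ by the uniform convergence $a_n\to a_{\infty}$ on compacts; on the other hand $G(\tau)=\Vert \xi^n(\tau)-\xi^{\infty}(\tau)\Vert_{Y_0}^2\to 0$ provided $\xi^{\infty}$ is taken to be the limit of $\xi^n(\tau)=\xi(t_n+\tau)$, which exists along a subsequence by the regularity of the attractors (Theorem \ref{regularity pullback attractor}) together with the compact embedding $Y_1\hookrightarrow Y_0$, and which is itself a bounded global solution of \eqref{problem with limit function}. Identifying $\xi^{\infty}$ with this limit is what makes the corollary hold for the full sequence rather than merely a subsequence, after which substituting $\xi^n(t)=\xi(t_n+t)$ yields $\Vert \xi(t_n+t)-\xi^{\infty}(t)\Vert_{Y_0}^2\to 0$ on compact subsets of $\mathbb{R}$.
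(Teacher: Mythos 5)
Your route is the paper's intended one: the corollary carries no separate proof in the paper and is meant to be read off by combining Proposition \ref{Prop 3.5 rel} (the shift $\xi(t_n+\cdot)$ coincides with the bounded global solution $\xi^n$ of \eqref{problem n}, with the attractors transported by $\mathbb{A}_n(t)=\mathbb{A}_0(t_n+t)$) with the energy--Gronwall machinery of Theorem \ref{stability}. Your diagnosis of the boundary term is exactly the point the paper glosses over: Theorem \ref{stability} is formulated for solutions of \eqref{problem n} and \eqref{problem with limit function} issued from a common datum at time $\tau$, which is what yields $G(\tau)=0$ there, whereas the global solutions in the corollary share no datum. Keeping $G(\tau)$ and closing with $G(t)\leq \bigl(G(\tau)+\Vert a_n-a_{\infty}\Vert_{L^{\infty}([\tau,t])}\bigr)e^{C''(t-\tau)}$ for a fixed $\tau$ below the compact set is the correct repair, and the ingredients you invoke (the uniform $Y_1$-bound of Theorem \ref{regularity pullback attractor}, hence the compactness \eqref{compactness limit set}) do give precompactness of $\{\xi(t_n+\tau)\}_n$ in $Y_0$.

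Two steps, however, remain only gestured at. First, that a subsequential limit of $\xi(t_n+\cdot)$ is a bounded \emph{global} solution of \eqref{problem with limit function} does not follow from convergence at the single time $\tau$: you need a diagonal extraction over reference times $\tau_m\to-\infty$, using your forward Gronwall inequality to propagate convergence on $[\tau_m,T]$ and to check consistency of the limiting trajectory across the $\tau_m$; this should be written out. Second, and more substantively, the claim that ``identifying $\xi^{\infty}$ with this limit'' upgrades the conclusion to the full sequence is unjustified: nothing in your argument excludes two subsequences of $\xi(t_n+\tau)$ converging to values of two distinct bounded global solutions of \eqref{problem with limit function}, and no analogue of Proposition \ref{Prop 3.5 rel} is available for the $a_{\infty}$-problem, because \eqref{problem with limit function} is not an exact time-translate of \eqref{limit system}. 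As written you obtain the convergence along a subsequence, with $\xi^{\infty}$ the solution produced by that extraction. Note that this is also the only tenable reading of the statement: for an arbitrary prescribed $\xi^{\infty}$ it is false (take $a_0$ constant, so $a_n=a_{\infty}=a_0$, and $\xi\equiv e_i^{\ast}$, $\xi^{\infty}\equiv e_j^{\ast}$ two distinct equilibria), and subsequential convergence is all that the proof of Theorem \ref{structure limit PA} actually uses. So the defect lies in the final sentence of your argument, not in its substance.
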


\subsection{Gradient-like structure of the limit pullback attractor}

From here on,  we shall assume that the problem \eqref{edp01}-\eqref{cond01} admits only a finite number of equilibrium points (time independent solutions), that is, there exist only finitely many solutions $\{(u_1^{\ast}, 0), \ldots, (u_r^{\ast}, 0)\}$ of the elliptic problem
\[
\begin{cases}
(- \Delta)u + u = f(u) & \mbox{in} \quad \Omega, \\
(- \Delta)v = 0 & \mbox{in} \quad \Omega, \\
u = v = 0 & \mbox{on} \quad \partial\Omega,
\end{cases}
\]
and we will denote the set of equilibria as $\mathcal{E}^{\ast} = \{e_1^{\ast}, \ldots, e_r^{\ast} \}$, where each $e_j^{\ast}$ is of the form
\[
e_j^{\ast} \, \!\!=\!\!
\begin{bmatrix}
u_j^{\ast}\\ 0\\ 0\\ 0
\end{bmatrix},
\quad 1\leq j\leq r.
\]
Further, we shall assume that all equilibrium points in $\mathcal{E}^{\ast}$ are hyperbolic, in the sense of Definition \ref{def hyperbolic sol}, 
for the limit problem ($\epsilon = 0$) associated to the system \eqref{edp01}-\eqref{cond01}. 

In order to pursue a result on lower semicontinuity of pullback attractors at $\epsilon = 0$, we will assume the following hyperbolicity condition:
\begin{equation}\label{hyper condition}
0 \notin \sigma (A+I - f^{\prime}(u)) \quad \mbox{whenever} \quad 
u\in H_0^1(\Omega)
\end{equation}
and the spectrum $\sigma (A+I - f^{\prime}(u))$ has no intersection with the imaginary axis.

%Moreover, we shall assume that $\frac{n-1}{n-2} \leq \rho < \frac{n}{n-2}$.

\begin{proposition}\label{prop de Lyapunov}
The evolution system \eqref{edp01}-\eqref{cond01} admits a Lyapunov functional.
\end{proposition}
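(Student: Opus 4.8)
The plan is to take the natural energy of the system as the candidate Lyapunov functional and to show, through a single energy identity, that it decreases along trajectories and is strictly decreasing away from the equilibria. Writing $F(s) = \int_0^s f(r)\,dr$, I would define, for $W = (u, u_t, v, v_t) \in Y_0$,
\[
V(W) = \frac{1}{2}\left( \Vert u_t \Vert_X^2 + \Vert u \Vert_{X^{1\slash 2}}^2 + \Vert u \Vert_X^2 + \Vert v_t \Vert_X^2 + \Vert v \Vert_{X^{1\slash 2}}^2 \right) - \int_\Omega F(u)\,dx.
\]
The subcritical growth \eqref{Gcondition} gives $\vert F(s)\vert \leq c(1 + \vert s\vert^{\rho + 1})$ with $\rho + 1 < \frac{2N}{N-2}$, so the embedding $X^{1\slash 2} = H_0^1(\Omega) \hookrightarrow L^{\rho + 1}(\Omega)$ ensures that $W \mapsto \int_\Omega F(u)\,dx$ is well defined and continuous on $Y_0$; hence $V$ is a continuous functional on the phase space.

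Next I would differentiate $V$ along a solution of \eqref{edp01}. Taking the inner product in $X$ of the first equation with $u_t$ and of the second equation with $v_t$, and adding the resulting identities, reproduces the time derivative of each quadratic term in $V$ — using $\langle Au, u_t\rangle_X = \frac{1}{2}\frac{d}{dt}\Vert u\Vert_{X^{1\slash 2}}^2$ and $\langle f(u), u_t\rangle_X = \frac{d}{dt}\int_\Omega F(u)\,dx$ — together with the dissipation $\eta\Vert A^{1\slash 4}u_t\Vert_X^2 + \eta\Vert A^{1\slash 4}v_t\Vert_X^2$ and the two coupling contributions $a_\epsilon(t)\langle A^{1\slash 2} v_t, u_t\rangle_X - a_\epsilon(t)\langle A^{1\slash 2} u_t, v_t\rangle_X$. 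The crucial observation is that these coupling terms cancel: since $A^{1\slash 2}$ is self-adjoint on $X$, one has $\langle A^{1\slash 2} v_t, u_t\rangle_X = \langle v_t, A^{1\slash 2} u_t\rangle_X = \langle A^{1\slash 2} u_t, v_t\rangle_X$. Therefore
\[
\frac{d}{dt} V(W(t)) = -\eta\left( \Vert A^{1\slash 4} u_t \Vert_X^2 + \Vert A^{1\slash 4} v_t \Vert_X^2 \right) \leq 0,
\]
and $t \mapsto V(W(t))$ is non-increasing. I would emphasize that this cancellation is exactly what prevents the gyroscopic coupling from destroying the gradient structure; it holds for every bounded coefficient $a_\epsilon(t)$, so in particular the same functional $V$ serves for all $\epsilon \in [0,1]$, which is the structural feature on which the subsequent lower semicontinuity analysis rests.

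To complete the argument I would verify the rigidity property: if $\xi(\cdot) = (u, u_t, v, v_t)$ is a bounded global solution along which $V$ is constant, then $\xi$ is an equilibrium. Constancy of $V$ forces $\Vert A^{1\slash 4} u_t\Vert_X = \Vert A^{1\slash 4} v_t\Vert_X = 0$ for all $t$; since $A^{1\slash 4}$ is injective, $u_t \equiv 0$ and $v_t \equiv 0$, so $u$ and $v$ are time independent and $u_{tt} = v_{tt} = 0$. Substituting back into \eqref{edp01} (the coupling terms vanish because $u_t = v_t = 0$) yields $-\Delta v = 0$ with $v\vert_{\partial\Omega} = 0$, hence $v \equiv 0$, while $u$ solves $-\Delta u + u = f(u)$ with $u\vert_{\partial\Omega} = 0$; that is, $\xi$ coincides with one of the equilibria in $\mathcal{E}^{\ast}$. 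This establishes that $V$ is a Lyapunov functional.

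The main obstacle is not the algebra but making the formal energy identity rigorous, since multiplying by $u_t, v_t$ and differentiating $V$ presupposes enough regularity of the solution. I would deal with this by first carrying out the computation for solutions lying in the pullback attractor, which by Theorem \ref{regularity pullback attractor} take values in the more regular space $Y_1 = X^1 \times X^{1\slash 2} \times X^1 \times X^{1\slash 2}$; there $u_t, v_t \in X^{1\slash 2}$, so every pairing $\langle A^{1\slash 2} v_t, u_t\rangle_X$ and $\langle f(u), u_t\rangle_X$ is finite and the identity above is legitimate. A standard approximation argument then extends the monotonicity of $V$ to arbitrary solutions, which is all that is needed for $V$ to qualify as a Lyapunov functional.
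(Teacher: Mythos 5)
Your proposal is correct and follows essentially the same route as the paper: the identical energy functional $\mathcal{E}(W)=\frac12\bigl(\Vert u_t\Vert_X^2+\Vert u\Vert_{X^{1\slash 2}}^2+\Vert u\Vert_X^2+\Vert v_t\Vert_X^2+\Vert v\Vert_{X^{1\slash 2}}^2\bigr)-\int_\Omega\int_0^u f(s)\,ds\,dx$, the same multiplication by $u_t$ and $v_t$ with cancellation of the skew coupling terms, and the same rigidity step forcing $u_t\equiv v_t\equiv 0$ and hence membership in $\mathcal{E}^{\ast}$. Your additional remarks --- continuity of the functional on $Y_0$, the explicit self-adjointness argument for the cancellation, and the justification of the formal identity via the $Y_1$-regularity of attractor trajectories --- are sound refinements of details the paper leaves implicit.
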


\begin{proof}
Let $(u(t), u_t(t), v(t), v_t(t))$ be the global solution of the problem \eqref{edp01}-\eqref{cond01} in $Y_0$. Multiplying the first equation in \eqref{edp01} by $u_t$, and the second by $v_t$, we obtain
\[
\begin{split}
&\frac{1}{2}\frac{d}{dt}\int_{\Omega}\vert u_t\vert^2 \, dx 
+ \frac{1}{2}\frac{d}{dt}\int_{\Omega}\vert\nabla u\vert^2 \, dx 
+ \frac{1}{2}\frac{d}{dt}\int_{\Omega}\vert u\vert^2 \, dx 
+ \eta\Vert (-\Delta)^{1\slash 4} u_t \Vert_X^2 \\
&+ a_{\epsilon}(t) \langle (-\Delta)^{1\slash 2} v_t, u_t \rangle_X = 
\frac{d}{dt} \int_{\Omega} \int_0^u f(s) \, dsdx
\end{split}
\]
and
\[
\frac{1}{2}\frac{d}{dt}\int_{\Omega}\vert v_t\vert^2 \, dx 
+ \frac{1}{2}\frac{d}{dt}\int_{\Omega}\vert\nabla v\vert^2 \, dx 
+ \eta\Vert (-\Delta)^{1\slash 4} v_t \Vert_X^2 
- a_{\epsilon}(t) \langle (-\Delta)^{1\slash 2} u_t, v_t \rangle_X = 0,
\]
for all $t > \tau$, which combined leads to
\begin{equation}\label{derivative func 01}
\begin{split}
&\frac{d}{dt} \frac12 \left( \Vert u\Vert_{X^{1\slash 2}}^2 + \Vert u\Vert_X^2 + 
\Vert u_t\Vert_X^2 + \Vert v\Vert_{X^{1\slash 2}}^2 + \Vert v_t\Vert_X^2 - 
2\int_{\Omega} \int_0^u f(s) \, dsdx \right) \\
&= - \eta\Vert (-\Delta)^{1\slash 4} u_t \Vert_X^2  - 
\eta\Vert (-\Delta)^{1\slash 4} v_t \Vert_X^2
\end{split}
\end{equation}
for all $t > \tau$.

Thus, we may define an energy functional in the phase space $Y_0$ through the map $\mathcal{E} \colon Y_0 \to\mathbb{R}$ given by
\begin{equation}\label{energy func}
\begin{split}
\mathcal{E} (\phi, \varphi, \psi, \Phi) &= 
\frac12 \left( \Vert \phi \Vert_{X^{1\slash 2}}^2 + \Vert \phi \Vert_X^2 + 
\Vert \varphi \Vert_X^2 + \Vert \psi \Vert_{X^{1\slash 2}}^2 + 
\Vert \Phi \Vert_X^2 \right) \\
&- \int_{\Omega} \int_0^{\phi} f(s) \, dsdx,
\end{split}
\end{equation}
for $(\phi, \varphi, \psi, \Phi) \in Y_0$. By \eqref{derivative func 01} and \eqref{energy func}, it follows that
\[
\frac{d}{dt} \mathcal{E} (u(t), u_t(t), v(t), v_t(t)) 
= - \eta\Vert (-\Delta)^{1\slash 4} u_t \Vert_X^2  - 
\eta\Vert (-\Delta)^{1\slash 4} v_t \Vert_X^2
\]
for all $t > \tau$ and, therefore, the energy functional $\mathcal{E}$ is monotone decreasing along solutions.

Moreover, if $\mathcal{E}$ is a constant function with respect to $t\in\mathbb{R}$, then it follows from \eqref{derivative func 01} that $- \eta\Vert (-\Delta)^{1\slash 4} u_t \Vert_X^2  - 
\eta\Vert (-\Delta)^{1\slash 4} v_t \Vert_X^2 = 0$. Consequently, $\vert (-\Delta)^{1\slash 4}u_t \vert^2 =
\vert (-\Delta)^{1\slash 4}v_t \vert^2 = 0 \mbox{ almost everywhere on } 
\Omega$, i.e., $u_t = 0 \mbox{ and } v_t = 0 \mbox{ almost everywhere on } \Omega$.
%\[
%\begin{split}
%&- \eta\Vert (-\Delta)^{1\slash 4} u_t \Vert_X^2  - 
%\eta\Vert (-\Delta)^{1\slash 4} v_t \Vert_X^2 = 0 \\
%&\Longrightarrow \Vert (-\Delta)^{1\slash 4} u_t \Vert_X^2 = 
%\Vert (-\Delta)^{1\slash 4} v_t \Vert_X^2 = 0 \\
%&\Longrightarrow \int_{\Omega}\vert (-\Delta)^{1\slash 4}u_t \vert^2 \, dx = 
%\int_{\Omega}\vert (-\Delta)^{1\slash 4}v_t \vert^2 \, dx = 0 \\
%&\Longrightarrow \vert (-\Delta)^{1\slash 4}u_t \vert^2 = 0 \mbox{ and } 
%\vert (-\Delta)^{1\slash 4}v_t \vert^2 = 0 \mbox{ almost everywhere on } 
%\Omega \\
%&\Longrightarrow u_t = 0 \mbox{ and } v_t = 0 \mbox{ almost everywhere on } \Omega,
%\end{split}
%\]
%{\color{red}Therefore, $u(x, t) = g_1(x) + c_1$ and $v(x, t) = g_2(x) + c_2$ almost everywhere on $\Omega$, where $g_1, g_2$ are functions that depend only on $x\in\Omega$ and $c_1, c_2$ are real constants.} 
Consequently, the global solutions where the energy functional $\mathcal{E}$ is constant in time $t\in\mathbb{R}$ must belong to the set of equilibria $\mathcal{E}^{\ast}$ (set of time independent solutions). This proves that $\mathcal{E}\colon Y_0\to\mathbb{R}$, defined in \eqref{energy func} is a Lyapunov functional for the evolution system \eqref{edp01}-\eqref{cond01}.
\end{proof}

%\begin{corollary}
%The evolution system \eqref{edp01}-\eqref{cond01} is gradient.
%\end{corollary}

%\subsection{Gradient-like structure of the limit pullback attractor}

In the next result, we consider the unstable manifold around an equilibria $e_i^{\ast}$, $1\leq i\leq r$, given by
\[
W^u(e_i^{\ast}) =  \{(t, \zeta) \in \R\times Y_0: \text{ there is a global solution } \xi\colon \R \to X  \text{ such that }
\]
\[
 \xi(t) = \zeta  \text{ and } \displaystyle\lim_{\tau \to -\infty} dist(\xi(\tau), e_i^{\ast}) =0\}
\]
and its section at time $t \in \R$
\[
W^u(e_i^{\ast})(t) = \{\zeta\in Y_0\colon (t, \zeta)\in W^u(e_i^{\ast}) \}.
\] 

\begin{theorem}\label{structure limit PA}
The limit pullback attractor $\{\mathbb{A}_0(t)\colon t\in\mathbb{R}\}$, associated to the problem \eqref{edp01}-\eqref{cond01} with $\epsilon = 0$, can be written as
\[
\mathbb{A}_0(t) = \bigcup_{i = 1}^r W^u(e_i^{\ast})(t) \quad \mbox{for all} \quad 
t\in\mathbb{R}.
\]
\end{theorem}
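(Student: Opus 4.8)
The plan is to establish the two inclusions separately, relying on the standard characterization of the section of a pullback attractor (whose union is precompact, cf. \eqref{compactness limit set}) as the set of values at time $t$ of all bounded global solutions, together with the Lyapunov functional $\mathcal{E}$ from Proposition \ref{prop de Lyapunov} and the hyperbolicity hypothesis \eqref{hyper condition}. For the inclusion $\bigcup_{i=1}^r W^u(e_i^{\ast})(t) \subseteq \mathbb{A}_0(t)$, I would take $\zeta \in W^u(e_i^{\ast})(t)$, so that there is a global solution $\xi$ with $\xi(t)=\zeta$ and $\mathrm{dist}(\xi(\tau), e_i^{\ast}) \to 0$ as $\tau \to -\infty$. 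Since $\xi$ is backward asymptotic to an equilibrium and enters, for positive times, the bounded dissipative region guaranteed by Theorem \ref{the solution is exponentially dominated}, it is a bounded global solution, hence belongs to the pullback attractor by invariance and pullback attraction. This direction is routine.

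The reverse inclusion $\mathbb{A}_0(t) \subseteq \bigcup_{i=1}^r W^u(e_i^{\ast})(t)$ is the heart of the matter. Given $\zeta \in \mathbb{A}_0(t)$, invariance of the attractor yields a bounded global solution $\xi\colon \mathbb{R}\to Y_0$ with $\xi(t)=\zeta$ and $\xi(s)\in\mathbb{A}_0(s)$ for every $s$, and I must show that $\xi$ is backward asymptotic to a single equilibrium. The crucial observation is that in \eqref{derivative func 01} the coupling terms cancel, so the derivative of $\mathcal{E}$ along any solution equals $-\eta\Vert(-\Delta)^{1/4}u_t\Vert_X^2 - \eta\Vert(-\Delta)^{1/4}v_t\Vert_X^2$, independently of $a_0(t)$; thus $\mathcal{E}$ is genuinely monotone decreasing in time. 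Because the attractor sections lie in the precompact set \eqref{compactness limit set}, the function $\tau\mapsto\mathcal{E}(\xi(\tau))$ is monotone and bounded, so $\lim_{\tau\to-\infty}\mathcal{E}(\xi(\tau))$ exists.

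To identify the backward limit, I would fix a sequence $t_n\to-\infty$ and pass to the translated functions $a_n(\cdot)=a_0(t_n+\cdot)$, which converge along a subsequence to some $a_{\infty}$ as in \eqref{a_n def}. By Proposition \ref{Prop 3.5 rel} the translate $\xi(t_n+\cdot)$ coincides with the global solution $\xi^n$ of \eqref{problem n}, and the Corollary following it gives $\xi(t_n+\cdot)\to\xi^{\infty}$ uniformly on compact sets, where $\xi^{\infty}$ is a bounded global solution of the limit-translated problem \eqref{problem with limit function}. The convergence of $\mathcal{E}$ along $\xi$ to a single value forces $t\mapsto\mathcal{E}(\xi^{\infty}(t))$ to be constant; since the Lyapunov identity holds for \eqref{problem with limit function} as well (again the coupling drops out), this constancy yields $u_t^{\infty}\equiv v_t^{\infty}\equiv 0$, so $\xi^{\infty}$ is time independent and hence equals some $e_j^{\ast}\in\mathcal{E}^{\ast}$ (the equilibria being common to all the translated problems, as the elliptic system does not involve $a$). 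Thus every backward accumulation point of $\xi$ lies in the finite set $\mathcal{E}^{\ast}$; the hyperbolicity assumption \eqref{hyper condition} makes each $e_j^{\ast}$ isolated, and a connectedness argument for the backward-limit set then forces convergence to a single $e_i^{\ast}$, giving $\zeta\in W^u(e_i^{\ast})(t)$.

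The main obstacle I anticipate is the non-autonomous character of the limit problem: one cannot invoke the classical LaSalle invariance principle directly, and the backward asymptotics must be extracted through the translation machinery of Section \ref{limit problem} (Theorem \ref{stability}, Proposition \ref{Prop 3.5 rel} and its Corollary). Care is needed to ensure that the limiting solution $\xi^{\infty}$ is genuinely a bounded global solution of \eqref{problem with limit function}, that $\mathcal{E}$ is truly constant along it, and that the hyperbolicity of the common equilibria forces convergence to a single point rather than drift along $\mathcal{E}^{\ast}$.
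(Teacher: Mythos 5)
Your argument is correct and, in its core, coincides with the paper's proof: both rely on the Lyapunov functional of Proposition \ref{prop de Lyapunov} (whose derivative along solutions is independent of $a$, since the coupling cancels), on the compactness \eqref{compactness limit set} to obtain limits $\ell_{i},\ell_{j}$ of $\mathcal{E}(\xi(t))$ as $t\to\pm\infty$, and on the translation machinery (Arzel\`a--Ascoli applied to $a_0(t_n+\cdot)$, Theorem \ref{stability}, Proposition \ref{Prop 3.5 rel} and its Corollary) to conclude that any limit of $\xi(t_n+\cdot)$ is a global solution of \eqref{problem with limit function} along which $\mathcal{E}$ is constant, hence an equilibrium in $\mathcal{E}^{\ast}$, the elliptic system being independent of $a$. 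You genuinely diverge only in the final ``single limit point'' step: the paper argues by contradiction, picking intermediate times $\tau_n\in(t_n,s_n)$ at which $\xi(\tau_n)$ lies outside the $\delta/3$-balls around the two putative limits, so that the corresponding limit is yet another equilibrium, and iterating contradicts the finiteness of $\mathcal{E}^{\ast}$; you instead note that the backward-limit set $\alpha(\xi)=\bigcap_{s}\overline{\xi((-\infty,s])}$ is a decreasing intersection of compact connected sets (the range of $\xi$ being precompact), hence compact and connected, and since it is contained in the finite set $\mathcal{E}^{\ast}$ it must be a singleton. Your route is cleaner and fully valid; note, though, that hyperbolicity \eqref{hyper condition} is not what isolates the equilibria --- finiteness of $\mathcal{E}^{\ast}$ already does, and the paper's proof of this theorem uses only finiteness (hyperbolicity enters later, for the exponential dichotomy and lower semicontinuity in Section \ref{sub exponential dichotomy}). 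Your explicit verification of the easy inclusion $\bigcup_{i=1}^{r}W^u(e_i^{\ast})(t)\subseteq\mathbb{A}_0(t)$, via forward dissipativity (Theorem \ref{the solution is exponentially dominated}) and the characterization of the sections of a pullback attractor with bounded union by bounded global solutions, is also correct and makes explicit a step the paper leaves implicit.
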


\begin{proof}
At first place, we will prove that any bounded global solution in the pullback attractor $\{\mathbb{A}_0(t)\colon t\in\mathbb{R}\}$ is both forwards and backwards asymptotic to some equilibria in $\mathcal{E}^{\ast}$. Indeed, let $\xi\colon\mathbb{R}\to Y_0$ be a bounded global solution of the problem \eqref{limit system} such that $\xi(t)\in \mathbb{A}_0(t)$ for all $t\in\mathbb{R}$.  Since $\bigcup_{t \in \R} \mathbb{A}_0(t)$ is compact (see \eqref{compactness limit set}) and the energy functional $\mathcal{E}$ is monotone decreasing along solutions, there exist
 $\ell_i, \ell_j \in\mathbb{R}$ such that
\begin{equation}\label{convergence of energy t}
\ell_i \xleftarrow{t\rightarrow -\infty} \mathcal{E}(\xi(t + s)) \xrightarrow{t\rightarrow +\infty} \ell_j
\end{equation}
for all $s \in \R$.

Let  $\{t_n\}_{n\in\mathbb{N}} \subset\mathbb{R}$ be a sequence such that $t_n \xrightarrow{n\rightarrow\infty} +\infty$. We may assume without loss of generality that
\[
a_0(t_n + s) = a_n(s) \xrightarrow{n\rightarrow\infty} a_{\infty}(s)
\]
uniformly for $s$ in compact subsets of $\mathbb{R}$. Consider problem \eqref{problem n} with this function $a_n$. By Theorem \ref{stability} and Proposition \ref{Prop 3.5 rel}, if
\[
\xi^n(\cdot) = (u^n(\cdot), u_t^n(\cdot), v^n(\cdot), v_t^n(\cdot)) 
\quad \mbox{and} \quad 
\xi^{\infty}(\cdot) = (u^{\infty}(\cdot), u_t^{\infty}(\cdot), v^{\infty}(\cdot), v_t^{\infty}(\cdot))
\]
are global solutions associated with the problems \eqref{problem n} and \eqref{problem with limit function}, respectively, then we have
\begin{equation}\label{convergence of xi(t_n + t)}
\xi(t_n + t) = \xi^n(t) \xrightarrow{n\rightarrow\infty} \xi^{\infty}(t) 
\quad \mbox{in} \quad Y_0
\end{equation}
in compact subsets of $\mathbb{R}$.  By \eqref{convergence of energy t} and \eqref{convergence of xi(t_n + t)},
\[
\mathcal{E}(\xi^{\infty}(t)) = \ell_j
\]
for all $t \in \R$.  Since the global solutions where the energy functional $\mathcal{E}$ is constant in $t\in\mathbb{R}$ are the equilibria in $\mathcal{E}^{\ast}$, there exists an equilibrium point $e_k^{\ast} \in \mathcal{E}^{\ast}$, for some $1\leq k\leq r$, such that $\xi^{\infty}(\cdot) \equiv e_k^{\ast}$. Hence,
\[
\xi(t_n + t) \xrightarrow{n\rightarrow +\infty} e_k^{\ast} \in \mathcal{E}^{\ast} 
\quad \mbox{in} \quad Y_0
\]
uniformly on compact subsets of $\mathbb{R}$. Similarly, we can take a sequence 
$\{t_n\}_{n\in\mathbb{N}} \subset\mathbb{R}$ with $t_n \xrightarrow{n\rightarrow\infty} -\infty$ and prove that  there exists an equilibrium point $e_p^{\ast} \in \mathcal{E}^{\ast}$, for some $1\leq p\leq r$, such that

\[
\xi(t_n + t) \xrightarrow{n\rightarrow \infty} e_p^{\ast} \in \mathcal{E}^{\ast} 
\quad \mbox{in} \quad Y_0
\]
uniformly on compact subsets of $\mathbb{R}$.

Now, we argue that this assertion does not depend on the particular choices of subsequences, and then we can conclude our result. Indeed, suppose that there are sequences $t_n, s_n \xrightarrow{n\rightarrow\infty} \infty$ such that
\[
\xi(t_n) \xrightarrow{n\rightarrow\infty} e_i^{\ast} \neq e_j^{\ast} 
\xleftarrow{n\rightarrow\infty} \xi(s_n).
\]
Reindexing these sequences, if necessary, we can assume without loss of generality that $t_{n+1} > s_n > t_n$ for all $n\in\mathbb{N}$.   Let $\delta = \|e_i^{\ast}- e_j^{\ast}\|_{Y_0} > 0$. There is $n_0 \in \N$ such that $\xi(t_n) \in B(e_i^{\ast}, \frac{\delta}{3})$ and $\xi(s_n) \in B(e_j^{\ast}, \frac{\delta}{3})$ for all $n \geq n_0$. Let $\tau_n\in (t_n, s_n)$ be such that $\xi(\tau_n) \notin B(e_j^{\ast}, \frac{\delta}{3}) \cup B(e_i^{\ast}, \frac{\delta}{3})$, for $n \geq n_0$. Thus, $\tau_n \xrightarrow{n\rightarrow\infty} \infty$ and, taking subsequences if necessary, we also have
\[
a_0(\tau_n + s) = a_n(s) \xrightarrow{n\rightarrow\infty} \tilde{a}(s)
\]
uniformly for $s$ in compact subsets of $\mathbb{R}$. Repeating the previous argument, we obtain
\[
\xi(\tau_n + t)  \xrightarrow{n\rightarrow\infty} \tilde{\xi}(t) 
\]
in compact subsets of $\mathbb{R}$, where $\tilde{\xi}$ is the global solution of the problem \eqref{problem with limit function} with $\bar{a}$ replaced by $\tilde{a}$, with 
\[
\tilde{\xi}(\cdot) \equiv e_m^{\ast} \in \mathcal{E}^{\ast}. 
\]
Since $\xi(\tau_n) \notin B(e_j^{\ast}, \frac{\delta}{3}) \cup B(e_i^{\ast}, \frac{\delta}{3})$, for $n \geq n_0$, we conclude that 
\[
 e_m^{\ast} \in \mathcal{E}^{\ast} \backslash 
\{e_i^{\ast}, e_j^{\ast}\}.
\]
Continuing with the previous argument, we obtain a contradiction due to the fact that there are only finitely many equilibria in $\mathcal{E}^{\ast}$. Hence, we can write the pullback attractor $\{\mathbb{A}_0(t)\colon t\in\mathbb{R}\}$ as
\[
\mathbb{A}_0(t) = \bigcup_{i = 1}^r W^u(e_i^{\ast})(t) \quad \mbox{for all} \quad 
t\in\mathbb{R},
\]
and the proof is complete.
\end{proof}

\section{The linearized system}\label{linearized system}

Consider the linearization of the problem \eqref{edp01}-\eqref{cond01} around an equilibrium point $e_j^{\ast} \in \mathcal{E}^{\ast}$, $1\leq j\leq r$, which is given by the system
\[
\begin{cases}
\tilde{u}_{tt} + (-\Delta)\tilde{u} + \tilde{u} + \eta(-\Delta)^{1\slash 2}\tilde{u}_t + 
a_{\epsilon}(t)(-\Delta)^{1\slash 2}\tilde{v}_t - f^{\prime}(u_j^{\ast}) \tilde{u} 
= h(\tilde{u}), & \mbox{in} \quad \Omega, \\
\tilde{v}_{tt} + (-\Delta)\tilde{v} + \eta(-\Delta)^{1\slash 2}\tilde{v}_t - 
a_{\epsilon}(t)(-\Delta)^{1\slash 2}\tilde{u}_t = 0, & \mbox{in} \quad \Omega, \\
\tilde{u} = \tilde{v} = 0, & \mbox{on} \quad \partial\Omega, \\
\end{cases}
\]
where
\[
h(\tilde{u}) = f(\tilde{u} + u_j^{\ast}) - f(u_j^{\ast}) - f^{\prime}(u_j^{\ast}) \tilde{u}.
\]
This system can be rewritten in the following abstract form

\begin{equation}\label{linearized abstract ode}
\tilde{W}_t + \tilde{\mathcal{A}}_{\epsilon}(t) \tilde{W} = 
\tilde{F}(\tilde{W}), \quad t > \tau \in\mathbb{R},
\end{equation}
where
\begin{equation}\label{linearized abstract operator}
\tilde{\mathcal{A}}_{\epsilon}(t) \, \!\!=\!\!
\begin{bmatrix}
0     & -I                           & 0 & 0 \\
A+I - f^{\prime}(u_j^{\ast}) & \eta A^{1\slash 2} & 0 & a_{\epsilon}(t) A^{1\slash 2} \\
0     & 0                            & 0 & -I \\
0     & - a_{\epsilon}(t) A^{1\slash 2} & A & \eta A^{1\slash 2}
\end{bmatrix}
\end{equation}
with
\[
\tilde{W} = \begin{bmatrix} \tilde{u}\\ \tilde{u}_t\\ \tilde{v}\\ \tilde{v}_t\\ \end{bmatrix} 
\quad \mbox{and} \quad 
\tilde{F}(\tilde{W}) = \begin{bmatrix} 0\\ h(\tilde{u})\\ 0\\ 0\\ \end{bmatrix}.
\]

We may write $\tilde{\mathcal{A}}_{\epsilon}(t) = \mathcal{A}_{\epsilon}(t) + B$, where $\mathcal{A}_{\epsilon}(t)$ is defined by \eqref{abstract operator}-\eqref{domain abstract operator} and $B$ is given by
\begin{equation}\label{op B}
B \, \!\!=\!\!
\begin{bmatrix}
0     & 0                           & 0 & 0 \\
-f^{\prime}(u_j^{\ast}) & 0 & 0 & 0 \\
0     & 0                            & 0 & 0 \\
0     & 0 & 0 & 0
\end{bmatrix},
\end{equation}
with $D(B) = D(\mathcal{A}_{\epsilon}(t)) = Y_1$.

\begin{proposition}\label{bound op B}
The operator $B$, defined in \eqref{op B}, satisfies
\[
\Vert B \Vert_{\mathcal{L}(Y_0)} \leq 
c\left(1 + \max\limits_{1\leq i\leq r} \Vert e_i^{\ast} \Vert_{Y_0}^{\rho - 1} \right), 
\]
where $c>0$ is a constant.
\end{proposition}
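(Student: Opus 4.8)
The plan is to read off the operator norm directly from the block structure of $B$. Since
$B[u\ v\ w\ z]^T = [0\ {-}f^{\prime}(u_j^{\ast})u\ 0\ 0]^T$ and the target space is
$Y_0 = X^{1/2}\times X \times X^{1/2}\times X$, only the second slot contributes, so
$\|B[u\ v\ w\ z]^T\|_{Y_0} = \|f^{\prime}(u_j^{\ast})u\|_X = \|f^{\prime}(u_j^{\ast})u\|_{L^2(\Omega)}$,
where $f^{\prime}(u_j^{\ast})$ is understood as the multiplication operator by the function
$x\mapsto f^{\prime}(u_j^{\ast}(x))$. On the other hand $\|u\|_{X^{1/2}} \leq \|[u\ v\ w\ z]^T\|_{Y_0}$, so the whole estimate reduces to proving
$\|f^{\prime}(u_j^{\ast})u\|_{L^2(\Omega)} \leq c\bigl(1 + \|u_j^{\ast}\|_{X^{1/2}}^{\rho-1}\bigr)\|u\|_{X^{1/2}}$.

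For this, first I would apply H\"older's inequality with the exponents $p = N$ and $q = \frac{2N}{N-2}$, which satisfy $\frac1p + \frac1q = \frac12$, to obtain
$\|f^{\prime}(u_j^{\ast})u\|_{L^2} \leq \|f^{\prime}(u_j^{\ast})\|_{L^N(\Omega)}\,\|u\|_{L^{2N/(N-2)}(\Omega)}$.
The factor $\|u\|_{L^{2N/(N-2)}}$ is controlled by $\|u\|_{X^{1/2}}$ through the critical Sobolev embedding $X^{1/2} = H_0^1(\Omega) \hookrightarrow L^{2N/(N-2)}(\Omega)$. For the first factor I would invoke the growth condition \eqref{Gcondition}, $|f^{\prime}(s)| \leq c(1 + |s|^{\rho-1})$, which yields $\|f^{\prime}(u_j^{\ast})\|_{L^N} \leq c\bigl(|\Omega|^{1/N} + \|u_j^{\ast}\|_{L^{(\rho-1)N}}^{\rho-1}\bigr)$. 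The exponent bookkeeping is the crux: one must check that $(\rho-1)N \leq \frac{2N}{N-2}$, which is precisely the condition $\rho \leq \frac{N}{N-2}$ guaranteed (strictly) by the standing assumption $\frac{N-1}{N-2}\leq \rho < \frac{N}{N-2}$ of this section; hence $X^{1/2}\hookrightarrow L^{(\rho-1)N}(\Omega)$ and $\|u_j^{\ast}\|_{L^{(\rho-1)N}} \leq C\|u_j^{\ast}\|_{X^{1/2}}$.

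Combining these steps gives $\|f^{\prime}(u_j^{\ast})u\|_{L^2} \leq c\bigl(1 + \|u_j^{\ast}\|_{X^{1/2}}^{\rho-1}\bigr)\|u\|_{X^{1/2}}$. Finally, since $e_j^{\ast} = [u_j^{\ast}\ 0\ 0\ 0]^T$ one has $\|e_j^{\ast}\|_{Y_0} = \|u_j^{\ast}\|_{X^{1/2}}$, so replacing $\|u_j^{\ast}\|_{X^{1/2}}^{\rho-1}$ by $\max_{1\leq i\leq r}\|e_i^{\ast}\|_{Y_0}^{\rho-1}$ and taking the supremum over $\|[u\ v\ w\ z]^T\|_{Y_0}\leq 1$ produces the claimed bound $\|B\|_{\mathcal{L}(Y_0)} \leq c\bigl(1 + \max_{1\leq i\leq r}\|e_i^{\ast}\|_{Y_0}^{\rho-1}\bigr)$. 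I expect the only genuinely delicate point to be the verification that the H\"older/Sobolev exponents are admissible in the prescribed range of $\rho$; everything else is a routine chain of continuous embeddings on the bounded domain $\Omega$.
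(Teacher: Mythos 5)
Your proof is correct and takes essentially the same route as the paper's: reduce $\Vert B\Vert_{\mathcal{L}(Y_0)}$ to the multiplication bound $\Vert f^{\prime}(u_j^{\ast})u\Vert_{L^2(\Omega)} \leq c\bigl(1+\Vert u_j^{\ast}\Vert_{X^{1/2}}^{\rho-1}\bigr)\Vert u\Vert_{X^{1/2}}$ via H\"older, the growth condition \eqref{Gcondition}, and a Sobolev embedding of $X^{1/2}=H_0^1(\Omega)$. The only difference is bookkeeping: the paper uses the $\rho$-dependent H\"older split $L^{\frac{2\rho}{\rho-1}}\times L^{2\rho}$ with the subcritical embedding $X^{1/2}\hookrightarrow L^{2\rho}(\Omega)$, whereas you use the fixed split $L^{N}\times L^{\frac{2N}{N-2}}$ with the critical embedding and verify $(\rho-1)N\leq \frac{2N}{N-2}$ instead --- both admissible exactly because $\rho<\frac{N}{N-2}$.
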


\begin{proof} For a given $\vartheta =\begin{bmatrix} u & v & w & z\end{bmatrix}^T \in Y_0$ and recalling that 
 $e_j^{\ast} = \begin{bmatrix} u_j^{\ast} & 0 & 0 & 0\end{bmatrix}^T\in \mathcal{E}^{\ast},$  we have
\begin{equation}\label{bound B aux 01}
\begin{split}
&\Vert B\vartheta \Vert_{Y_0} \, \!\!=\!\! \,
\left\Vert
\begin{bmatrix}
0 \\
-f^{\prime}(u_j^{\ast}) u\\
0 \\
0
\end{bmatrix}
\right\Vert_{Y_0} 
= \Vert f^{\prime}(u_j^{\ast}) u \Vert_{L^2(\Omega)} \\
&\leq \Vert f^{\prime}(u_j^{\ast}) \Vert_{L^{\frac{2\rho}{\rho - 1}}(\Omega)} 
\Vert u \Vert_{L^{2\rho}(\Omega)} \leq \tilde{c} 
\Vert f^{\prime}(u_j^{\ast}) \Vert_{L^{\frac{2\rho}{\rho - 1}}(\Omega)} 
\Vert u \Vert_{X^{1\slash 2}} \\
&\leq \tilde{c} 
\Vert f^{\prime}(u_j^{\ast}) \Vert_{L^{\frac{2\rho}{\rho - 1}}(\Omega)} 
\Vert \vartheta \Vert_{Y_0},
\end{split}
\end{equation}
where we have used the H\"{o}lder inequality with $\frac{\rho - 1}{2\rho} + \frac{1}{2\rho} = \frac12$, and the embedding $X^{1\slash 2} \hookrightarrow L^{2\rho}(\Omega)$ (as condition $ \frac{N-1}{N-2} \leq \rho < \frac{N}{N-2}$ holds). On the other hand, we can use the growth condition \eqref{Gcondition} to obtain 
\begin{equation}\label{bound B aux 02}
\begin{split}
&\Vert f^{\prime} (u_j^{\ast}) \Vert_{L^{\frac{2\rho}{\rho - 1}}(\Omega)} \leq 
\left( \int_{\Omega} 
[c(1 + \vert u_j^{\ast} \vert^{\rho - 1}) ]^{\frac{2\rho}{\rho - 1}} \, dx 
\right)^{\frac{\rho - 1}{2\rho}} \\
&\leq c_1\left( \vert\Omega\vert + 
\int_{\Omega}\vert u_j^{\ast} \vert^{2\rho} \, dx 
\right)^{\frac{\rho - 1}{2\rho}} \\
&\leq c_2\left( \vert\Omega\vert^{\frac{\rho - 1}{2\rho}} + 
\Vert u_j^{\ast} \Vert_{L^{2\rho}(\Omega)}^{\rho - 1} \right) 
\leq c_3 \left( 1 + \Vert u_j^{\ast} \Vert_{L^{2\rho}(\Omega)}^{\rho - 1} \right) \\
&\leq c_4 \left( 1 + \Vert u_j^{\ast} \Vert_{X^{1\slash 2}}^{\rho - 1} \right) 
= c_4 \left( 1 + \Vert e_j^{\ast} \Vert_{Y_0}^{\rho - 1} \right).
\end{split}
\end{equation}

By \eqref{bound B aux 01} and \eqref{bound B aux 02}, we obtain
\[
\Vert B\vartheta \Vert_{Y_0} \leq 
c_5 \left( 1 + \Vert e_j^{\ast} \Vert_{Y_0}^{\rho - 1} \right) \Vert \vartheta \Vert_{Y_0} \leq 
c_5 \left(1 + \max\limits_{1\leq i\leq r} \Vert e_i^{\ast} \Vert_{Y_0}^{\rho - 1} \right) 
\Vert \vartheta \Vert_{Y_0},
\]
which yields 
$
\Vert B \Vert_{\mathcal{L}(Y_0)} \leq 
c_5 \left(1 + \max\limits_{1\leq i\leq r} \Vert e_i^{\ast} \Vert_{Y_0}^{\rho - 1} \right),
$ 
with $c_5 >0$ being a constant, which proves the result.
\end{proof}

\begin{proposition}\label{convergence inverses}
For each fixed $t\in\mathbb{R}$, there holds 
$
\Vert \tilde{\mathcal{A}}_{\epsilon}^{-1}(t) - 
\tilde{\mathcal{A}}_0^{-1}(t) \Vert_{\mathcal{L}(Y_0)} \rightarrow 0
$
as $\epsilon\rightarrow 0^{+}$.
\end{proposition}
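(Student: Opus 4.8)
The plan is to exploit the additive structure $\tilde{\mathcal{A}}_{\epsilon}(t) = \mathcal{A}_{\epsilon}(t) + B$ together with the explicit formula for $\mathcal{A}_{\epsilon}^{-1}(t)$ recorded in the proof of Lemma \ref{Lemma 3.3 aux AL2}, so as to confine the entire $\epsilon$-dependence of the inverse to a single scalar factor $a_{\epsilon}(t) - a_0(t)$. First I would factor $\tilde{\mathcal{A}}_{\epsilon}(t) = \mathcal{A}_{\epsilon}(t)\bigl(I + \mathcal{A}_{\epsilon}^{-1}(t)B\bigr)$, which is legitimate since $B \in \mathcal{L}(Y_0)$ by Proposition \ref{bound op B} and $\mathcal{A}_{\epsilon}^{-1}(t) \in \mathcal{L}(Y_0)$ with $\|\mathcal{A}_{\epsilon}^{-1}(t)\|_{\mathcal{L}(Y_0)} \leq N$ uniformly in $t$ and $\epsilon$. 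The crucial observation is that $\mathcal{A}_{\epsilon}^{-1}(t)B$ is independent of both $\epsilon$ and $t$: since $B[u,\,v,\,w,\,z]^T = [0,\,-f^{\prime}(u_j^{\ast})u,\,0,\,0]^T$ selects only the second column of $\mathcal{A}_{\epsilon}^{-1}(t)$, and that column equals $[(A+I)^{-1},\,0,\,0,\,0]^T$, which carries no $a_{\epsilon}$, one obtains the fixed bounded operator $\mathcal{K} := \mathcal{A}_{\epsilon}^{-1}(t)B$ acting by $\mathcal{K}[u,\,v,\,w,\,z]^T = [-(A+I)^{-1}f^{\prime}(u_j^{\ast})u,\,0,\,0,\,0]^T$.

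Next I would establish that $I + \mathcal{K}$ is boundedly invertible on $Y_0$. On the first coordinate it acts as $u \mapsto (A+I)^{-1}\bigl(A+I-f^{\prime}(u_j^{\ast})\bigr)u$ and as the identity on the remaining three, so its invertibility is equivalent to that of $A+I-f^{\prime}(u_j^{\ast})$, which is precisely guaranteed by the hyperbolicity condition \eqref{hyper condition} with $u = u_j^{\ast} \in H_0^1(\Omega)$; moreover $(I+\mathcal{K})^{-1}$ is bounded and independent of $\epsilon$ and $t$. From $\tilde{\mathcal{A}}_{\epsilon}(t) = \mathcal{A}_{\epsilon}(t)(I+\mathcal{K})$ I then obtain the clean factorization $\tilde{\mathcal{A}}_{\epsilon}^{-1}(t) = (I+\mathcal{K})^{-1}\mathcal{A}_{\epsilon}^{-1}(t)$, whence
\[
\tilde{\mathcal{A}}_{\epsilon}^{-1}(t) - \tilde{\mathcal{A}}_0^{-1}(t) = (I+\mathcal{K})^{-1}\bigl[\mathcal{A}_{\epsilon}^{-1}(t) - \mathcal{A}_0^{-1}(t)\bigr].
\]

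Finally, reading off the explicit inverse from Lemma \ref{Lemma 3.3 aux AL2}, the only entries of $\mathcal{A}_{\epsilon}^{-1}(t)$ depending on $\epsilon$ are the $(1,3)$ and $(3,1)$ ones, namely $a_{\epsilon}(t)A^{1\slash 2}(A+I)^{-1}$ and $-a_{\epsilon}(t)A^{-1\slash 2}$. Hence $\mathcal{A}_{\epsilon}^{-1}(t) - \mathcal{A}_0^{-1}(t) = [a_{\epsilon}(t)-a_0(t)]\mathcal{M}$, where $\mathcal{M} \in \mathcal{L}(Y_0)$ is the fixed matrix operator carrying those two entries and zeros elsewhere; its boundedness on $Y_0$ is checked using that $A(A+I)^{-1}$ and $A^{1\slash 2}A^{-1\slash 2}$ are bounded on $X$. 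Combining, and bounding at fixed $t$ by the supremum norm, I get
\[
\|\tilde{\mathcal{A}}_{\epsilon}^{-1}(t) - \tilde{\mathcal{A}}_0^{-1}(t)\|_{\mathcal{L}(Y_0)} \leq \|(I+\mathcal{K})^{-1}\|_{\mathcal{L}(Y_0)}\,\|\mathcal{M}\|_{\mathcal{L}(Y_0)}\,\|a_{\epsilon} - a_0\|_{L^{\infty}(\mathbb{R})},
\]
which tends to $0$ as $\epsilon \to 0^{+}$ by \eqref{norm function a}.

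The main obstacle is the middle step: verifying rigorously that $\mathcal{A}_{\epsilon}^{-1}(t)B$ is genuinely free of $\epsilon$ and $t$ (which rests entirely on the column structure of the explicit inverse) and that $I+\mathcal{K}$ is boundedly invertible. The latter is exactly where hyperbolicity enters, since without \eqref{hyper condition} the operator $\tilde{\mathcal{A}}_{\epsilon}(t)$ need not be invertible and the statement would be vacuous; once the factorization of the inverse is in place, the convergence estimate itself is routine.
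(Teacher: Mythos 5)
Your proposal is correct, but it takes a genuinely different route from the paper. The paper works directly with the explicit inverse of the \emph{linearized} operator (formula \eqref{formula inversa A}, whose first row carries $(A+I-f^{\prime}(u_j^{\ast}))^{-1}$), computes the difference $\tilde{\mathcal{A}}_{\epsilon}^{-1}(t)-\tilde{\mathcal{A}}_0^{-1}(t)$ entrywise, and then must prove the quantitative bound $\Vert A(A+I-f^{\prime}(u_j^{\ast}))^{-1}w\Vert_X \leq \tilde{\kappa}\Vert w\Vert_{X^{1\slash 2}}$, which costs it the H\"older estimate on $f^{\prime}(u_j^{\ast})$ in $L^{\frac{2\rho}{\rho-1}}(\Omega)$, the embedding $X^{1\slash 2}\hookrightarrow L^{2\rho}(\Omega)$, and the resolvent bound $\kappa_0$ from \eqref{hyper condition}. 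You instead factor $\tilde{\mathcal{A}}_{\epsilon}(t)=\mathcal{A}_{\epsilon}(t)(I+\mathcal{K})$ and correctly observe that $\mathcal{K}=\mathcal{A}_{\epsilon}^{-1}(t)B$ is a fixed operator (the second column of the inverse in Lemma \ref{Lemma 3.3 aux AL2} is $[(A+I)^{-1},0,0,0]^T$, free of $a_{\epsilon}$), so the whole $\epsilon$-dependence collapses into $\mathcal{A}_{\epsilon}^{-1}(t)-\mathcal{A}_0^{-1}(t)=[a_{\epsilon}(t)-a_0(t)]\mathcal{M}$ with $\mathcal{M}$ a fixed bounded operator; hyperbolicity is then used only qualitatively, to invert $I+\mathcal{K}$, and no quantitative estimate on $(A+I-f^{\prime}(u_j^{\ast}))^{-1}$ is needed. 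What each buys: your argument is cleaner and reuses the unperturbed inverse already computed in the paper, while the paper's direct computation produces the explicit constant $\tilde{\tilde{\kappa}}$ and avoids the one point you should tighten, namely the \emph{bounded} invertibility of $I+\mathcal{K}$ on $Y_0$. Your phrase ``equivalent to invertibility of $A+I-f^{\prime}(u_j^{\ast})$'' glosses over a space mismatch: for $u\in X^{1\slash 2}$ the expression $(A+I)^{-1}(A+I-f^{\prime}(u_j^{\ast}))u$ only makes sense through extrapolation. The clean fix is to note that $\mathcal{K}$ maps $Y_0$ boundedly into $X^1\times\{0\}\times\{0\}\times\{0\}$, hence is compact on $Y_0$, so by the Fredholm alternative it suffices to check injectivity of $I+\mathcal{K}$: if $u=(A+I)^{-1}f^{\prime}(u_j^{\ast})u$ then $u\in X^1$ and $(A+I-f^{\prime}(u_j^{\ast}))u=0$, forcing $u=0$ by \eqref{hyper condition}. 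With that one-line repair (and the routine check that the factorization identity holds on $D(\tilde{\mathcal{A}}_{\epsilon}(t))=Y_1$, which it does since $\mathcal{K}$ and $(I+\mathcal{K})^{-1}$ preserve $Y_1$), your proof is complete and yields the same final estimate $\Vert\tilde{\mathcal{A}}_{\epsilon}^{-1}(t)-\tilde{\mathcal{A}}_0^{-1}(t)\Vert_{\mathcal{L}(Y_0)}\leq C\Vert a_{\epsilon}-a_0\Vert_{L^{\infty}(\mathbb{R})}$ as the paper's.
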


\begin{proof} Note that
\begin{equation}\label{formula inversa A}
\small
\tilde{\mathcal{A}}_{\epsilon}^{-1}(t) =
\begin{bmatrix}
\eta A^{1\slash 2} (A+I - f^{\prime}(u_j^{\ast}))^{-1} & (A+I - f^{\prime}(u_j^{\ast}))^{-1} & a_{\epsilon}(t)A^{1\slash 2} (A+I - f^{\prime}(u_j^{\ast}))^{-1} & 0 \\
-I                                           & 0                & 0                                          & 0 \\
- a_{\epsilon}(t) A^{-1\slash 2}                 & 0               & \eta A^{-1\slash 2}            & A^{-1} \\
0                                            & 0               & -I                                         & 0
\end{bmatrix}. 
\end{equation}

Given $\vartheta =\begin{bmatrix} u & v & w & z\end{bmatrix}^T \in Y_0$, we have
\begin{equation}\label{conv inv aux 01}
\begin{split}
&\Vert (\tilde{\mathcal{A}}_{\epsilon}^{-1}(t) - 
\tilde{\mathcal{A}}_0^{-1}(t) )\vartheta \Vert_{Y_0} \\
%&\, \!\!=\!\! \,
%\left\Vert
%\begin{bmatrix}
%0 & 0 & [a_{\epsilon}(t) - a_0(t)] A^{1\slash 2} (A+I - f^{\prime}(u_j^{\ast}))^{-1} & 0 \\
%0                                           & 0                & 0                                          & 0 \\
%[a_0(t) - a_{\epsilon}(t)] A^{-1\slash 2}                 & 0               & 0             & 0 \\
%0                                            & 0               & 0                                          & 0
%\end{bmatrix}
%\begin{bmatrix} u\\ v\\ w\\ z \end{bmatrix}
%\right\Vert_{Y_0} \\
&= \left\Vert 
\begin{bmatrix}
[a_{\epsilon}(t) - a_0(t)] A^{1\slash 2} (A+I - f^{\prime}(u_j^{\ast}))^{-1} w\\
0\\ [a_0(t) - a_{\epsilon}(t)] A^{-1\slash 2} u\\ 0
\end{bmatrix}
\right\Vert_{Y_0} \\
&= \Vert [a_{\epsilon}(t) - a_0(t)] A^{1\slash 2} (A+I - f^{\prime}(u_j^{\ast}))^{-1} w 
\Vert_{X^{1\slash 2}} + 
\Vert [a_0(t) - a_{\epsilon}(t)] A^{-1\slash 2} u \Vert_{X^{1\slash 2}} \\
&= \vert a_{\epsilon}(t) - a_0(t)\vert \left( 
\Vert A (A+I - f^{\prime}(u_j^{\ast}))^{-1} w \Vert_X + 
\Vert u\Vert_X \right).
\end{split}
\end{equation}

By condition \eqref{hyper condition}, there exists a constant $\kappa_0 >0$ such that
\[
\Vert (A+I - f^{\prime}(u_j^{\ast}))^{-1} \Vert_{\mathcal{L}(X)} \leq \kappa_0.
\]

Following the proof of Proposition \ref{bound op B} (see \eqref{bound B aux 02}), there exists a constant $c>0$ such that
\[
\Vert f^{\prime} (u_j^{\ast}) \Vert_{L^{\frac{2\rho}{\rho - 1}}(\Omega)} \leq 
c \left( 1 + \Vert e_j^{\ast} \Vert_{Y_0}^{\rho - 1} \right) \leq 
c \left(1 + \max\limits_{1\leq i\leq r} \Vert e_i^{\ast} \Vert_{Y_0}^{\rho - 1} \right).
\]

Now, we use the embedding $X^{1\slash 2} \hookrightarrow L^{2\rho}(\Omega)$ (since $ \frac{N-1}{N-2} \leq \rho < \frac{N}{N-2}$) and H\"{o}lder inequality to obtain
\begin{equation}\label{conv inv aux 02}
\begin{split}
&\Vert A (A+I - f^{\prime}(u_j^{\ast}))^{-1} w \Vert_X \\
&= \left\Vert I - 
(I - f^{\prime}(u_j^{\ast}))\left(A+I - f^{\prime}(u_j^{\ast}))^{-1} 
\right)w \right\Vert_X \\
&\leq \Vert w\Vert_X + 
\Vert (I - f^{\prime}(u_j^{\ast})) (A+I - f^{\prime}(u_j^{\ast}))^{-1} w\Vert_X \\
&\leq \lambda_1^{-1\slash 2} \Vert w \Vert_{X^{1\slash 2}} + 
\Vert (A+I - f^{\prime}(u_j^{\ast}))^{-1} \Vert_{\mathcal{L}(X)} 
\Vert (I - f^{\prime}(u_j^{\ast})) w\Vert_X \\
&\leq \lambda_1^{-1\slash 2} \Vert w \Vert_{X^{1\slash 2}} + 
\kappa_0 \left( \Vert w\Vert_X + 
\Vert f^{\prime}(u_j^{\ast}) w\Vert_X \right) \\
&\leq \lambda_1^{-1\slash 2} \Vert w \Vert_{X^{1\slash 2}} + 
\kappa_{0}\lambda_1^{-1\slash 2} \Vert w \Vert_{X^{1\slash 2}} + 
\kappa_{0} \Vert f^{\prime}(u_j^{\ast})w \Vert_{L^2(\Omega)} \\
&\leq (1+\kappa_0) \lambda_1^{-1\slash 2} \Vert w \Vert_{X^{1\slash 2}} + 
\kappa_{0} \Vert f^{\prime}(u_j^{\ast}) \Vert_{L^{\frac{2\rho}{\rho - 1}}(\Omega)} 
\Vert w \Vert_{L^{2\rho}(\Omega)} \\
&\leq (1+\kappa_0) \lambda_1^{-1\slash 2} \Vert w \Vert_{X^{1\slash 2}} + 
\kappa_{0} c \left(1 + \max\limits_{1\leq i\leq r} \Vert e_i^{\ast} \Vert_{Y_0}^{\rho - 1} \right) \tilde{c} \Vert w \Vert_{X^{1\slash 2}} \\
&= \tilde{\kappa} \Vert w \Vert_{X^{1\slash 2}},
\end{split}
\end{equation}
where 
$
\tilde{\kappa} = (1+\kappa_0) \lambda_1^{-1\slash 2} + 
\kappa_{0} c\tilde{c} \left(1 + \max\limits_{1\leq i\leq r} \Vert e_i^{\ast} \Vert_{Y_0}^{\rho - 1} \right),
$ 
with $\tilde{c} >0$ being the embedding constant of $X^{1\slash 2} \hookrightarrow L^{2\rho}(\Omega)$, and $\lambda_1 > 0$ denotes the first eigenvalue of the negative Laplacian operator with homogeneous Dirichlet boundary condition.

Combining \eqref{conv inv aux 01} and \eqref{conv inv aux 02}, we get
\[
\begin{split}
&\Vert (\tilde{\mathcal{A}}_{\epsilon}^{-1}(t) - 
\tilde{\mathcal{A}}_0^{-1}(t) )\vartheta \Vert_{Y_0} 
\leq \vert a_{\epsilon}(t) - a_0(t)\vert \left( 
\tilde{\kappa} \Vert w \Vert_{X^{1\slash 2}} + 
\lambda_1^{-1\slash 2} \Vert u \Vert_{X^{1\slash 2}} \right) \\
&\leq \vert a_{\epsilon}(t) - a_0(t)\vert 
\max\{ \tilde{\kappa}, \lambda_1^{-1\slash 2} \} 
\left( \Vert u \Vert_{X^{1\slash 2}} + \Vert w \Vert_{X^{1\slash 2}} \right) \\
&\leq \vert a_{\epsilon}(t) - a_0(t)\vert 
\max\{ \tilde{\kappa}, \lambda_1^{-1\slash 2} \} 
\left( \Vert u \Vert_{X^{1\slash 2}} + \Vert v\Vert_X + 
\Vert w \Vert_{X^{1\slash 2}} + \Vert z\Vert_X \right) \\
&= \vert a_{\epsilon}(t) - a_0(t)\vert \tilde{\tilde{\kappa}} 
\Vert \vartheta \Vert_{Y_0},
\end{split}
\]
where the constant $\tilde{\tilde{\kappa}} = \max\{ \tilde{\kappa}, \lambda_1^{-1\slash 2} \} >0$ does not depend on $t\in \R$ neither on the parameter $\epsilon$. Therefore,
\[
\Vert \tilde{\mathcal{A}}_{\epsilon}^{-1}(t) - 
\tilde{\mathcal{A}}_0^{-1}(t) \Vert_{\mathcal{L}(Y_0)} \leq 
\tilde{\tilde{\kappa}} 
\Vert a_{\epsilon} - a_0 \Vert_{L^{\infty}(\mathbb{R})} 
\rightarrow 0 \quad \text{as} \quad \epsilon\rightarrow 0^{+},
\]
which proves our result.
\end{proof}

\begin{proposition}\label{convergence operators e and 0}
For each fixed $t,\tau\in\mathbb{R}$, there holds
\[
\Vert [\tilde{\mathcal{A}}_{\epsilon}(t) - 
\tilde{\mathcal{A}}_0(t)] \tilde{\mathcal{A}}^{-1}_\epsilon(\tau) \Vert_{\mathcal{L}(Y_0)} \leqslant \Vert a_\epsilon - a_0 \Vert_{L^{\infty}(\mathbb{R})}.
\]
\end{proposition}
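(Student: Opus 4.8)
The plan is to reduce the bound to an explicit matrix computation that is essentially identical to the one carried out in Lemma~\ref{Lemma 3.3 aux AL2}, the only difference being that the Hölder control of the coefficient is now replaced by the sup-norm of $a_\epsilon - a_0$. First I would observe that the operators $\tilde{\mathcal{A}}_\epsilon(t)$ depend on $\epsilon$ only through the scalar coefficient $a_\epsilon(t)$ sitting in the $(2,4)$ and $(4,2)$ blocks, so that their difference factors as a scalar multiple of a fixed matrix operator,
\[
\tilde{\mathcal{A}}_\epsilon(t) - \tilde{\mathcal{A}}_0(t) = [a_\epsilon(t) - a_0(t)]\, M,
\qquad
M = \begin{bmatrix} 0 & 0 & 0 & 0 \\ 0 & 0 & 0 & A^{1/2} \\ 0 & 0 & 0 & 0 \\ 0 & -A^{1/2} & 0 & 0 \end{bmatrix},
\]
where $M$ carries no dependence on $\epsilon$, on $t$, or on the perturbation $f^{\prime}(u_j^{\ast})$.

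Next I would multiply $M$ on the right by the explicit inverse $\tilde{\mathcal{A}}_\epsilon^{-1}(\tau)$ recorded in \eqref{formula inversa A}. The decisive point, and the reason the clean constant appears, is that $M$ extracts only the second and fourth rows of $\tilde{\mathcal{A}}_\epsilon^{-1}(\tau)$, and those two rows are $[-I\,\, 0\,\, 0\,\, 0]$ and $[0\,\, 0\,\, -I\,\, 0]$, which involve neither $a_\epsilon(\tau)$ nor $(A+I-f^{\prime}(u_j^{\ast}))^{-1}$. Consequently every complicated block of the inverse drops out and the product collapses to
\[
M\, \tilde{\mathcal{A}}_\epsilon^{-1}(\tau) = \begin{bmatrix} 0 & 0 & 0 & 0 \\ 0 & 0 & -A^{1/2} & 0 \\ 0 & 0 & 0 & 0 \\ A^{1/2} & 0 & 0 & 0 \end{bmatrix},
\]
a fixed operator that is, in particular, the same one that arises in the unperturbed computation of Lemma~\ref{Lemma 3.3 aux AL2}.

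Finally I would apply the resulting operator to an arbitrary $\vartheta = [u\,\, v\,\, w\,\, z]^T \in Y_0 = X^{1/2}\times X \times X^{1/2}\times X$, obtaining the vector $[a_\epsilon(t)-a_0(t)]\,[\,0\,\, {-A^{1/2}w}\,\, 0\,\, A^{1/2}u\,]^T$, and then read off the $Y_0$-norm componentwise. Using $\Vert A^{1/2}w\Vert_X \leq \Vert w\Vert_{X^{1/2}}$ and $\Vert A^{1/2}u\Vert_X \leq \Vert u\Vert_{X^{1/2}}$ together with $\Vert u\Vert_{X^{1/2}} + \Vert w\Vert_{X^{1/2}} \leq \Vert \vartheta\Vert_{Y_0}$, this yields
\[
\Vert [\tilde{\mathcal{A}}_\epsilon(t) - \tilde{\mathcal{A}}_0(t)]\,\tilde{\mathcal{A}}_\epsilon^{-1}(\tau)\,\vartheta \Vert_{Y_0} = \vert a_\epsilon(t)-a_0(t)\vert \big(\Vert A^{1/2}w\Vert_X + \Vert A^{1/2}u\Vert_X\big) \leq \vert a_\epsilon(t)-a_0(t)\vert\, \Vert \vartheta\Vert_{Y_0}.
\]
Bounding $\vert a_\epsilon(t)-a_0(t)\vert \leq \Vert a_\epsilon - a_0\Vert_{L^{\infty}(\mathbb{R})}$ and passing to the operator norm by taking the supremum over $\Vert\vartheta\Vert_{Y_0}\leq 1$ then gives the assertion.

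As for difficulty, there is essentially no analytic obstacle here: the whole statement hinges on the algebraic cancellation in the matrix product, and the only points requiring care are the correct bookkeeping of which rows of $\tilde{\mathcal{A}}_\epsilon^{-1}(\tau)$ survive multiplication by $M$, and the consistent use of the additive $Y_0$-norm convention already adopted in \eqref{conv inv aux 01} and in Lemma~\ref{Lemma 3.3 aux AL2}. In particular, neither the H\"older continuity of $a_\epsilon$ nor the spectral hypothesis \eqref{hyper condition} on $A+I-f^{\prime}(u_j^{\ast})$ is needed here, in contrast with the neighbouring Proposition~\ref{convergence inverses}.
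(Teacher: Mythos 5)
Your proposal is correct and follows essentially the same route as the paper's own proof: both exploit that $\tilde{\mathcal{A}}_{\epsilon}(t)-\tilde{\mathcal{A}}_0(t)$ is the scalar $a_{\epsilon}(t)-a_0(t)$ times a fixed matrix operator, compose with the explicit inverse \eqref{formula inversa A} so that only the rows $[-I\;0\;0\;0]$ and $[0\;0\;-I\;0]$ survive, and then bound the resulting vector $[a_{\epsilon}(t)-a_0(t)]\,[\,0\;\;{-A^{1/2}w}\;\;0\;\;A^{1/2}u\,]^T$ componentwise in $Y_0$ by $\Vert a_{\epsilon}-a_0\Vert_{L^{\infty}(\mathbb{R})}\Vert\vartheta\Vert_{Y_0}$. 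Your added remarks — that the cancellation makes the H\"older condition \eqref{hol-a} and the hyperbolicity assumption \eqref{hyper condition} unnecessary here — are accurate and consistent with the paper, which likewise uses neither in this proof.
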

\begin{proof} Given $\vartheta =\begin{bmatrix} u & v & w & z\end{bmatrix}^T \in Y_0$ and using \eqref{formula inversa A}, we have
\[
\begin{split}
\Vert [\tilde{\mathcal{A}}_{\epsilon}(t) - 
\tilde{\mathcal{A}}_0(t)] \tilde{\mathcal{A}}^{-1}_\epsilon(\tau)\vartheta \Vert_{Y_0} & =
%&\, \!\!=\!\! \,
%\left\Vert
%\begin{bmatrix}
%0 & 0 & 0 & 0 \\
%0                                           & 0                & -[a_{\epsilon}(t) - a_0(t)] A^{\frac12}                                         & 0 \\
%0                 & 0               & 0             & 0 \\
%[a_{\epsilon}(t) - a_0(t)] A^{\frac12}                                            & 0               & 0                                          & 0
%\end{bmatrix}
%\begin{bmatrix} u\\ v\\ w\\ z \end{bmatrix}
%\right\Vert_{Y_0} \\
 \left\Vert 
\begin{bmatrix}
0 \\
-[a_{\epsilon}(t) - a_0(t)] A^{\frac12}w  \\
0 \\ 
[a_{\epsilon}(t) - a_0(t)] A^{\frac12} u 
\end{bmatrix}
\right\Vert_{Y_0} \\
&= 
\Vert 
[a_{\epsilon}(t) - a_0(t)] A^{\frac12}w
\Vert_{X}
+ \Vert -[a_{\epsilon}(t) - a_0(t)] A^{\frac12}u\Vert_{X}   \\
&\leqslant 
\Vert a_\epsilon - a_0 \Vert_{L^{\infty}(\mathbb{R})} \|\vartheta \|_{Y_0}.
\end{split}
 \]

\end{proof}

\subsection{Uniform sectorial estimates and convergence of the linear semigroups}

Consider the sector $\Gamma$, whose angle is greater than $\frac{\pi}{2}$, given in Lemma \ref{Lemma 3.2 aux AL1}.

\begin{lemma}\label{sec lemma 03}
There exists $R_0 >0$ sufficiently large such that, if $\mu\in\Gamma$ satisfies $\vert\mu\vert\geq R_0$, then the operator $\tilde{\mathcal{A}}_{\epsilon}(t) = \mathcal{A}_{\epsilon}(t) + B$, where $B$ is given by \eqref{op B}, satisfies the estimate
\[
\Vert (\mu I + \tilde{\mathcal{A}}_{\epsilon}(t))^{-1} \Vert_{\mathcal{L}(Y_0)} \leq 
\frac{\overline{C}}{\vert\mu\vert},
\]
where the constant $\overline{C}>0$ is independent of both $t\in\mathbb{R}$ and $\epsilon\in [0, 1]$.
\end{lemma}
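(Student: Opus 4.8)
The plan is to regard $\tilde{\mathcal{A}}_{\epsilon}(t) = \mathcal{A}_{\epsilon}(t) + B$ as a \emph{bounded} perturbation of $\mathcal{A}_{\epsilon}(t)$, for which Lemma \ref{Lemma 3.2 aux AL1} already furnishes a uniform resolvent estimate on the sector $\Gamma$, and then to close the argument by a Neumann-series expansion whose convergence is guaranteed once $|\mu|$ is large. The two crucial inputs are that the bound $\|(\mu I + \mathcal{A}_{\epsilon}(t))^{-1}\|_{\mathcal{L}(Y_0)} \leq \mathcal{C}/(1+|\mu|)$ holds uniformly in $t \in \mathbb{R}$ and $\epsilon \in [0,1]$, and that $\|B\|_{\mathcal{L}(Y_0)}$ is finite and likewise independent of $t$ and $\epsilon$ by Proposition \ref{bound op B}.

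First I would write, for $\mu \in \Gamma$,
\[
\mu I + \tilde{\mathcal{A}}_{\epsilon}(t) = \mu I + \mathcal{A}_{\epsilon}(t) + B = (\mu I + \mathcal{A}_{\epsilon}(t))\bigl[I + (\mu I + \mathcal{A}_{\epsilon}(t))^{-1} B\bigr],
\]
which is legitimate since $(\mu I + \mathcal{A}_{\epsilon}(t))^{-1}$ exists on all of $\Gamma$ by Lemma \ref{Lemma 3.2 aux AL1}. Combining that lemma with Proposition \ref{bound op B}, I would estimate
\[
\|(\mu I + \mathcal{A}_{\epsilon}(t))^{-1} B\|_{\mathcal{L}(Y_0)} \leq \frac{\mathcal{C}}{1+|\mu|}\,\|B\|_{\mathcal{L}(Y_0)} \leq \frac{\mathcal{C}\,\|B\|_{\mathcal{L}(Y_0)}}{|\mu|}.
\]
Hence, choosing $R_0 := 2\,\mathcal{C}\,\|B\|_{\mathcal{L}(Y_0)}$, whenever $\mu \in \Gamma$ with $|\mu| \geq R_0$ the right-hand side is at most $1/2$, so that $I + (\mu I + \mathcal{A}_{\epsilon}(t))^{-1} B$ is invertible with inverse bounded by $(1 - 1/2)^{-1} = 2$ via the Neumann series. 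Note that $R_0$ depends only on $\mathcal{C}$ and $\|B\|_{\mathcal{L}(Y_0)}$, both of which are independent of $t$ and $\epsilon$.

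It then follows that $\mu I + \tilde{\mathcal{A}}_{\epsilon}(t)$ is invertible for such $\mu$, with
\[
(\mu I + \tilde{\mathcal{A}}_{\epsilon}(t))^{-1} = \bigl[I + (\mu I + \mathcal{A}_{\epsilon}(t))^{-1} B\bigr]^{-1}(\mu I + \mathcal{A}_{\epsilon}(t))^{-1},
\]
and therefore
\[
\|(\mu I + \tilde{\mathcal{A}}_{\epsilon}(t))^{-1}\|_{\mathcal{L}(Y_0)} \leq 2\cdot\frac{\mathcal{C}}{1+|\mu|} \leq \frac{2\mathcal{C}}{|\mu|},
\]
so the claim holds with $\overline{C} := 2\mathcal{C}$. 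Since neither $\mathcal{C}$ nor $\|B\|_{\mathcal{L}(Y_0)}$ depends on $t$ or $\epsilon$, both $R_0$ and $\overline{C}$ are uniform, as required.

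As this is genuinely a routine bounded-perturbation estimate, there is no serious analytic obstacle; the only point demanding care is the bookkeeping of uniformity, namely ensuring that the threshold $R_0$ and the final constant $\overline{C}$ are extracted solely from the $t$- and $\epsilon$-independent quantities $\mathcal{C}$ (from Lemma \ref{Lemma 3.2 aux AL1}) and $\|B\|_{\mathcal{L}(Y_0)}$ (from Proposition \ref{bound op B}). This uniformity is precisely what will allow the later convergence-of-linear-semigroups and exponential-dichotomy arguments to go through uniformly in the parameter.
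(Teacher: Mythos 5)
Your proof is correct and takes essentially the same approach as the paper: factor $\mu I + \tilde{\mathcal{A}}_{\epsilon}(t)$ through $\mu I + \mathcal{A}_{\epsilon}(t)$, combine the uniform sectorial estimate of Lemma \ref{Lemma 3.2 aux AL1} with the uniform bound on $B$ from Proposition \ref{bound op B}, and invert via a Neumann series once $\vert\mu\vert$ exceeds a threshold $R_0$ depending only on $t$- and $\epsilon$-independent constants. The only (harmless) differences are that you place the perturbation on the right, writing $(\mu I + \mathcal{A}_{\epsilon}(t))\bigl[I + (\mu I + \mathcal{A}_{\epsilon}(t))^{-1}B\bigr]$, where the paper writes $\bigl[I + B(\mu I + \mathcal{A}_{\epsilon}(t))^{-1}\bigr](\mu I + \mathcal{A}_{\epsilon}(t))$, and that you bypass the paper's intermediate relative bound $\Vert Bx\Vert_{Y_0} \leq k_0\Vert x\Vert_{Y_0} + \xi\Vert \mathcal{A}_{\epsilon}(t)x\Vert_{Y_0}$, which is superfluous since $B$ is bounded, so your bookkeeping of the constants is in fact slightly cleaner.
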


\begin{proof}
Note that it follows from \eqref{lim A(I)} that
\[
\begin{split}
\Vert \mu (\mu I + \mathcal{A}_{\epsilon}(t))^{-1} \Vert_{\mathcal{L}(Y_0)} &= \Vert I - \mathcal{A}_{\epsilon}(t) (\mu I + \mathcal{A}_{\epsilon}(t))^{-1} \Vert_{\mathcal{L}(Y_0)} \\
&\leq 1 + \Vert \mathcal{A}_{\epsilon}(t) (\mu I + \mathcal{A}_{\epsilon}(t))^{-1} \Vert_{\mathcal{L}(Y_0)} \leq 2 + \tilde{C} = C_1,
\end{split}
\]
that is,
\begin{equation}\label{sec est aux 01}
\Vert \mu (\mu I + \mathcal{A}_{\epsilon}(t))^{-1} \Vert_{\mathcal{L}(Y_0)} \leq C_1, \quad \mu \in \Gamma.
\end{equation}

Moreover, since
\[
\begin{split}
\mu I + \tilde{\mathcal{A}}_{\epsilon}(t) &= (\mu I + \mathcal{A}_{\epsilon}(t)) + B \\
&= (\mu I + \mathcal{A}_{\epsilon}(t)) + B(\mu I + \mathcal{A}_{\epsilon}(t))^{-1}(\mu I + \mathcal{A}_{\epsilon}(t)) \\
&= [I + B(\mu I + \mathcal{A}_{\epsilon}(t))^{-1}] (\mu I + \mathcal{A}_{\epsilon}(t)),
\end{split}
\]
we have
\begin{equation}\label{sec est aux 02}
(\mu I + \tilde{\mathcal{A}}_{\epsilon}(t))^{-1} = 
(\mu I + \mathcal{A}_{\epsilon}(t))^{-1} 
[I + B(\mu I + \mathcal{A}_{\epsilon}(t))^{-1}]^{-1}, \quad \mu \in \Gamma.
\end{equation}

On the other hand, by Proposition \ref{bound op B}, there is a constant $c>0$ such that
\[
\Vert B \Vert_{\mathcal{L}(Y_0)} \leq 
c\left(1 + \max\limits_{1\leq i\leq r} \Vert e_i^{\ast} \Vert_{Y_0}^{\rho - 1} \right), 
\]
that is, $B$ is a bounded linear operator from $Y_0$ into itself and, therefore, there exists a constant $k_0 >0$ such that 
$
\Vert -Bx\Vert_{Y_0} \leq k_0\Vert x\Vert_{Y_0} 
$
for all $x\in D(B) = D(\mathcal{A}_{\epsilon}(t)) = Y_1$.

Now, given $\alpha\in (0, 1),$ take $R_0 >0$ such that $1 - \frac{k_0 C_1}{R_0} \geq\alpha$ and let $\xi >0$ be such that $\xi (1+\tilde{C}) <\alpha$. 
In this way, we have
\[
1 - \xi (1+\tilde{C}) - \frac{k_0 C_1}{R_0} \geq \alpha - \xi (1+\tilde{C}) >0.
\]
Furthermore, we also have
\[
\Vert -Bx\Vert_{Y_0} \leq k_0\Vert x\Vert_{Y_0} + \xi\Vert \mathcal{A}_{\epsilon}(t)x \Vert_{Y_0}
\]
for all $x\in Y_1$. Thus, using \eqref{lim A(I)} and \eqref{sec est aux 01}, we obtain
\[
\begin{split}
\Vert -B(\mu I + \mathcal{A}_{\epsilon}(t))^{-1} \Vert_{\mathcal{L}(Y_0)} &\leq \xi\Vert \mathcal{A}_{\epsilon}(t) (\mu I + \mathcal{A}_{\epsilon}(t))^{-1} \Vert_{\mathcal{L}(Y_0)} + 
k_0\Vert (\mu I + \mathcal{A}_{\epsilon}(t))^{-1} \Vert_{\mathcal{L}(Y_0)} \\
&\leq \xi (1+\tilde{C}) + \frac{k_0 C_1}{\vert\mu\vert} 
\end{split}
\]
and, consequently,
\[
1 - \Vert -B(\mu I + \mathcal{A}_{\epsilon}(t))^{-1} \Vert_{\mathcal{L}(Y_0)} 
\geq 1 - \xi (1+\tilde{C}) - \frac{k_0 C_1}{\vert\mu\vert}, \quad \mu \in \Gamma, \mu \neq 0.
\]

An application of the Neumann series leads to
\begin{equation}\label{sec est aux 03}
\begin{split}
\Vert [I + B(\mu I + \mathcal{A}_{\epsilon}(t))^{-1}]^{-1} \Vert_{\mathcal{L}(Y_0)} 
&\leq 
\frac{1}{1 - \Vert -B(\mu I + \mathcal{A}_{\epsilon}(t))^{-1} \Vert_{\mathcal{L}(Y_0)}} \\
&\leq \left( 1 - \xi (1+\tilde{C}) - \frac{k_0 C_1}{\vert\mu\vert} \right)^{-1}. 
\end{split}
\end{equation}

Now, if $\mu \in \Gamma$ with $\vert\mu\vert\geq R_0$, taking $C_2 = 1 - \xi (1+\tilde{C}) - \frac{k_0 C_1}{R_0} >0$, we derive
\begin{equation}\label{sec est aux 04}
\left( 1 - \xi (1+\tilde{C}) - \frac{k_0 C_1}{\vert\mu\vert} \right)^{-1} \leq \frac{1}{C_2}
\end{equation}
and, combining \eqref{sec est aux 01}, \eqref{sec est aux 02}, \eqref{sec est aux 03} and \eqref{sec est aux 04}, it follows that
\begin{equation}\label{sec estimate 0300}
\begin{split}
\Vert (\mu I + \tilde{\mathcal{A}}_{\epsilon}(t))^{-1} \Vert_{\mathcal{L}(Y_0)} &\leq\Vert (\mu I + \mathcal{A}_{\epsilon}(t))^{-1} \Vert_{\mathcal{L}(Y_0)} 
\Vert [I + B(\mu I + \mathcal{A}_{\epsilon}(t))^{-1}]^{-1} \Vert_{\mathcal{L}(Y_0)} \\
&\leq \frac{C_1}{\vert\mu\vert} \frac{1}{C_2} = \frac{C_3}{\vert\mu\vert},
\end{split}
\end{equation}
where the constant $C_3 = \frac{C_1}{C_2} >0$ is independent of $t\in\mathbb{R}$ and $\epsilon\in [0, 1]$. This concludes the proof.
\end{proof}

\begin{remark}\label{remark sector} 
The estimate obtained in \eqref{sec estimate 0300} holds outside a ball of radius $R_0 >0$. 
Now, let $a > R_0$, $\phi_0\in (\pi\slash 2, \pi)$ be the angle of the sector $\Gamma$ and consider the sector
\[
\Sigma_a = \{ \lambda\in\mathbb{C}\colon \vert \arg (\lambda - a)\vert \leq \phi_0, \, \lambda \neq a\}.
\]
If $\mu\in \Sigma_a$, then by \eqref{sec estimate 0300}, we obtain
\[
\begin{split}
\Vert (\mu I + \tilde{\mathcal{A}}_{\epsilon}(t))^{-1} \Vert_{\mathcal{L}(Y_0)} 
&\leq \frac{C_3}{\vert\mu\vert} 
\leq \frac{C_3}{\vert\mu - a\vert} \left(1 + \frac{\vert a\vert}{\vert\mu\vert} \right) \leq \frac{C_3}{\vert\mu - a\vert} \left(1 + \frac{\vert a\vert}{R_0} \right) 
\end{split}
\]
provided $\vert\mu\vert\geq R_0$, that is, 
\[
\Vert (\mu I + \tilde{\mathcal{A}}_{\epsilon}(t))^{-1} \Vert_{\mathcal{L}(Y_0)} 
\leq \frac{C_4}{\vert\mu - a\vert} 
\quad \mbox{for all} \quad \mu\in \Sigma_a,
\]
where the constant $C_4 = C_3\left(1 + \frac{\vert a\vert}{R_0} \right)  >0$ is independent of $t\in\mathbb{R}$ and $\epsilon\in [0, 1]$.
\end{remark}

%\begin{center}
%\includegraphics[scale=0.35]{ball-sector-curve.png}
%\end{center}

%\begin{tikzpicture}
%\draw circle(2);
%\draw [->](0,0)--node[above]{$R_0$} (10:2);
%\end{tikzpicture}

\begin{center}
\begin{tikzpicture}
\draw[-latex] (-3,0)--(7,0) node[below]{$x$};
\draw[-latex] (0,-2.5)--(0,2.5) node[left]{$y$};
\draw (0,0) circle (1.4142cm);
\draw[dashed] (0,0) -- (1,1);
\node at (0.35,0.7) {\footnotesize $R_0$};
\draw[color=red,line width=0.4mm] (-2,-2.5) -- (0,0) -- (-2,2.5);
\draw[color=olive,line width=0.4mm] (1,-2.5) -- (3,0) -- (1,2.5);
\draw[color=brown,line width=0.4mm] (3,-2.5) -- (5,0) -- (3,2.5);
\node at (3.1,-0.2) {\footnotesize $a$};
\node at (1.4,-2.5) {\footnotesize $\Sigma_a$};
\node at (3.3,-2.5) {\footnotesize $\Gamma'$};
\node at (-1.7,-2.5) {\footnotesize $\Gamma$};
\filldraw[white] (3,0) circle (0.6mm);
\draw[olive] (3,0) circle (0.6mm);
%\filldraw[white] (5,0) circle (0.6mm);
%\draw[brown] (5,0) circle (0.6mm);
\end{tikzpicture}
\end{center}

For each $\epsilon\in [0, 1],$ let 
$
\{ e^{-s \tilde{\mathcal{A}}_{\epsilon}(t)} \colon s\geq 0\} \subset \mathcal{L}(Y_0)
$
be the analytic semigroup generated by the operator $-\tilde{\mathcal{A}}_{\epsilon}(t)$, for each $t\in\mathbb{R}$ fixed. In Proposition \ref{convergence linear semigroups} below we present a result on convergence for this family of semigroups as $\epsilon\rightarrow 0^{+}$.

\begin{proposition}\label{convergence linear semigroups}
For $\epsilon\in [0, 1],$ the family of semigroups 
$
\{ e^{-s \tilde{\mathcal{A}}_{\epsilon}(t)} \colon s\geq 0\}
$
satisfies
\[
\Vert e^{-s \tilde{\mathcal{A}}_{\epsilon}(t)} - e^{-s \tilde{\mathcal{A}}_0(t)} 
\Vert_{\mathcal{L}(Y_0)} \rightarrow 0 
\quad \mbox{as} \quad \epsilon\rightarrow 0^{+}
\]
uniformly for $s$ on compact subsets of $\mathbb{R}_+$.
\end{proposition}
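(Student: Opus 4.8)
The plan is to represent each analytic semigroup through its Dunford (contour) integral along the boundary $\partial\Sigma_a$ of the sector $\Sigma_a$ introduced in Remark \ref{remark sector}, subtract the two representations, and use the second resolvent identity to factor out the quantity $\Vert a_\epsilon - a_0\Vert_{L^\infty(\mathbb{R})}$, which tends to zero by \eqref{norm function a}. Concretely, for each fixed $t\in\mathbb{R}$ I would first fix $a > R_0$ large enough that $a\sin\phi_0 > R_0$, so that the whole contour $\partial\Sigma_a$ lies inside $\{\mu\in\Gamma\colon|\mu|\geq R_0\}$, where the uniform (in $t$ and $\epsilon$) sectorial estimate of Lemma \ref{sec lemma 03} is available; in particular $|\lambda|\geq a\sin\phi_0>0$ on $\partial\Sigma_a$, which is what prevents any singularity of $|\lambda|^{-1}$ at the vertex. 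Since $-\tilde{\mathcal{A}}_{\epsilon}(t)$ generates the analytic semigroup $\{e^{-s\tilde{\mathcal{A}}_{\epsilon}(t)}\colon s\geq 0\}$, I would then write
\[
e^{-s\tilde{\mathcal{A}}_{\epsilon}(t)} - e^{-s\tilde{\mathcal{A}}_0(t)} = \frac{1}{2\pi i}\int_{\partial\Sigma_a} e^{\lambda s}\left[(\lambda I + \tilde{\mathcal{A}}_{\epsilon}(t))^{-1} - (\lambda I + \tilde{\mathcal{A}}_0(t))^{-1}\right]d\lambda.
\]

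The core step is to estimate the resolvent difference in the integrand. Writing the resolvent identity in the form
\[
(\lambda I + \tilde{\mathcal{A}}_{\epsilon}(t))^{-1} - (\lambda I + \tilde{\mathcal{A}}_0(t))^{-1} = -(\lambda I + \tilde{\mathcal{A}}_0(t))^{-1}[\tilde{\mathcal{A}}_{\epsilon}(t) - \tilde{\mathcal{A}}_0(t)]\tilde{\mathcal{A}}_{\epsilon}^{-1}(t)\,\tilde{\mathcal{A}}_{\epsilon}(t)(\lambda I + \tilde{\mathcal{A}}_{\epsilon}(t))^{-1},
\]
I would bound the three factors separately: the first by $\Vert(\lambda I + \tilde{\mathcal{A}}_0(t))^{-1}\Vert_{\mathcal{L}(Y_0)}\leq\overline{C}/|\lambda|$ from Lemma \ref{sec lemma 03}; the middle factor $[\tilde{\mathcal{A}}_{\epsilon}(t) - \tilde{\mathcal{A}}_0(t)]\tilde{\mathcal{A}}_{\epsilon}^{-1}(t)$ by $\Vert a_\epsilon - a_0\Vert_{L^\infty(\mathbb{R})}$ via Proposition \ref{convergence operators e and 0} with $\tau = t$; and the last factor by
\[
\Vert \tilde{\mathcal{A}}_{\epsilon}(t)(\lambda I + \tilde{\mathcal{A}}_{\epsilon}(t))^{-1}\Vert_{\mathcal{L}(Y_0)} = \Vert I - \lambda(\lambda I + \tilde{\mathcal{A}}_{\epsilon}(t))^{-1}\Vert_{\mathcal{L}(Y_0)} \leq 1 + \overline{C},
\]
again by Lemma \ref{sec lemma 03}. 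Multiplying these estimates yields
\[
\left\Vert (\lambda I + \tilde{\mathcal{A}}_{\epsilon}(t))^{-1} - (\lambda I + \tilde{\mathcal{A}}_0(t))^{-1}\right\Vert_{\mathcal{L}(Y_0)} \leq \frac{\overline{C}(1 + \overline{C})}{|\lambda|}\,\Vert a_\epsilon - a_0\Vert_{L^\infty(\mathbb{R})}
\]
for every $\lambda\in\partial\Sigma_a$, with a constant independent of $t$ and $\epsilon$.

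Inserting this into the contour integral gives
\[
\left\Vert e^{-s\tilde{\mathcal{A}}_{\epsilon}(t)} - e^{-s\tilde{\mathcal{A}}_0(t)}\right\Vert_{\mathcal{L}(Y_0)} \leq \frac{\overline{C}(1 + \overline{C})}{2\pi}\Vert a_\epsilon - a_0\Vert_{L^\infty(\mathbb{R})}\int_{\partial\Sigma_a}\frac{|e^{\lambda s}|}{|\lambda|}\,|d\lambda|,
\]
and it remains to show that $J(s) := \int_{\partial\Sigma_a}|e^{\lambda s}|\,|\lambda|^{-1}\,|d\lambda|$ is finite for $s>0$ and locally bounded. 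Parametrizing the two rays as $\lambda = a + r e^{\pm i\phi_0}$, $r\geq 0$, one has $\mathrm{Re}\,\lambda = a - r|\cos\phi_0|$ (with $\cos\phi_0<0$) and $|\lambda|\geq a\sin\phi_0>0$, so the integrand is dominated by $(a\sin\phi_0)^{-1}e^{as}e^{-rs|\cos\phi_0|}$ and decays exponentially as $r\to\infty$; hence $J(s)<\infty$ for each $s>0$ and $\sup_{s\in K}J(s)<\infty$ for every compact $K\subset\mathbb{R}_+$. Combining this with $\Vert a_\epsilon - a_0\Vert_{L^\infty(\mathbb{R})}\to 0$ from \eqref{norm function a} yields the claimed convergence, uniformly for $s$ in compact subsets of $\mathbb{R}_+$.

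I expect the main obstacle to be precisely the control of the contour integral: one must choose the vertex $a$ large enough that the entire boundary $\partial\Sigma_a$ stays inside the region $|\lambda|\geq R_0$ where the uniform sectorial bound of Lemma \ref{sec lemma 03} holds, and one must split the resolvent difference so that \emph{exactly one} factor of $\Vert a_\epsilon - a_0\Vert_{L^\infty(\mathbb{R})}$ is extracted (through Proposition \ref{convergence operators e and 0}) while the two remaining factors stay uniformly bounded in $t$ and $\epsilon$ and integrable against $e^{\lambda s}/|\lambda|$ along $\partial\Sigma_a$. The uniformity in both $t$ and $\epsilon$ in all the constants, inherited from Lemma \ref{sec lemma 03} and Proposition \ref{convergence operators e and 0}, is what makes the final bound depend on $\epsilon$ only through $\Vert a_\epsilon - a_0\Vert_{L^\infty(\mathbb{R})}$.
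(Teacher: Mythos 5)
Your proposal is correct and follows essentially the same route as the paper's proof: a Dunford contour representation of both analytic semigroups, a second-resolvent-identity factorization of the resolvent difference in which the uniform sectorial bounds of Lemma \ref{sec lemma 03} control the outer factors while the middle factor carries the smallness coming from $a_\epsilon\to a_0$. The only (harmless) variations are that you extract the middle factor via Proposition \ref{convergence operators e and 0} and bound the contour integral explicitly, obtaining the quantitative rate $\Vert a_\epsilon-a_0\Vert_{L^\infty(\mathbb{R})}$, whereas the paper sandwiches $\tilde{\mathcal{A}}_{\epsilon}^{-1}(t)-\tilde{\mathcal{A}}_0^{-1}(t)$ (Proposition \ref{convergence inverses}) between $\tilde{\mathcal{A}}_{\epsilon}(t)(\mu I+\tilde{\mathcal{A}}_{\epsilon}(t))^{-1}$ and $\tilde{\mathcal{A}}_0(t)(\mu I+\tilde{\mathcal{A}}_0(t))^{-1}$ and concludes with the Dominated Convergence Theorem.
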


\begin{proof}
Let $t\in\mathbb{R}$ be fixed and arbitrary. 
According to \cite[Theorem 1.3.4]{Henry}, for each $\epsilon\in [0, 1],$ the semigroup 
$
\{ e^{-s \tilde{\mathcal{A}}_{\epsilon}(t)} \colon s\geq 0\}
$
can be written as
\begin{equation}\label{escrita da exponencial}
e^{-s \tilde{\mathcal{A}}_{\epsilon}(t)} = \frac{1}{2\pi i} \int_{\Gamma'} e^{\mu s} 
(\mu I + \tilde{\mathcal{A}}_{\epsilon}(t))^{-1} \, d\mu, \quad s\geq 0,
\end{equation}
where the curve $\Gamma'$ is a contour in $\rho(-\tilde{\mathcal{A}}_{\epsilon}(t))$, with $\arg\mu \rightarrow \pm\theta$ as $\vert\mu\vert \rightarrow \infty$, for some $\theta \in (\pi\slash 2, \pi)$. In addition to the previous conditions, let us assume, without loss of generality, $\Gamma'$ is contained in the sector $\Sigma_a$ given by
\[
\Sigma_a = \{ \lambda\in\mathbb{C}\colon  \vert \arg (\lambda - a)\vert \leq\theta, \, \lambda \neq a\},
\]
where $a > R_0$ and $R_0 >0$ is given as in Lemma \ref{sec lemma 03} (see also Remark \ref{remark sector}). 
At first place, we have
\begin{equation}\label{norm dif semigroups}
\begin{split}
&\Vert e^{-s \tilde{\mathcal{A}}_{\epsilon}(t)} - e^{-s \tilde{\mathcal{A}}_0(t)} 
\Vert_{\mathcal{L}(Y_0)} \\
&= \left\Vert \frac{1}{2\pi i} \int_{\Gamma'} e^{\mu s} 
(\mu I + \tilde{\mathcal{A}}_{\epsilon}(t))^{-1} \, d\mu - \frac{1}{2\pi i} \int_{\Gamma'} 
e^{\mu s} (\mu I + \tilde{\mathcal{A}}_0(t))^{-1} \, d\mu \right\Vert_{\mathcal{L}(Y_0)} \\
&\leq \frac{1}{2\pi}\int_{\Gamma'} \vert e^{\mu s}\vert \Vert 
(\mu I + \tilde{\mathcal{A}}_{\epsilon}(t))^{-1} - 
(\mu I + \tilde{\mathcal{A}}_0(t))^{-1} \Vert_{\mathcal{L}(Y_0)} \, d\mu.
\end{split}
\end{equation}

Now, in order to deal with the resolvent operators in \eqref{norm dif semigroups}, we first write
\begin{equation}\label{dif resolvent}
\begin{split}
&\Vert (\mu I + \tilde{\mathcal{A}}_{\epsilon}(t))^{-1} - 
(\mu I + \tilde{\mathcal{A}}_0(t))^{-1} \Vert_{\mathcal{L}(Y_0)} \\
&= \Vert (\mu I + \tilde{\mathcal{A}}_{\epsilon}(t))^{-1} 
[ (\mu I + \tilde{\mathcal{A}}_0(t)) - (\mu I + \tilde{\mathcal{A}}_{\epsilon}(t)) ] 
(\mu I + \tilde{\mathcal{A}}_0(t))^{-1} \Vert_{\mathcal{L}(Y_0)} \\
&= \Vert (\mu I + \tilde{\mathcal{A}}_{\epsilon}(t))^{-1} \tilde{\mathcal{A}}_{\epsilon}(t) 
[ \tilde{\mathcal{A}}_{\epsilon}^{-1}(t) - \tilde{\mathcal{A}}_0^{-1}(t) ] 
\tilde{\mathcal{A}}_0(t) (\mu I + \tilde{\mathcal{A}}_0(t))^{-1} \Vert_{\mathcal{L}(Y_0)} \\
&\leq \Vert (\mu I + \tilde{\mathcal{A}}_{\epsilon}(t))^{-1} \tilde{\mathcal{A}}_{\epsilon}(t) \Vert_{\mathcal{L}(Y_0)} 
\Vert \tilde{\mathcal{A}}_{\epsilon}^{-1}(t) - \tilde{\mathcal{A}}_0^{-1}(t) \Vert_{\mathcal{L}(Y_0)} 
\Vert \tilde{\mathcal{A}}_0(t) (\mu I + \tilde{\mathcal{A}}_0(t))^{-1} \Vert_{\mathcal{L}(Y_0)}.
\end{split}
\end{equation}

On the other hand, using Lemma \ref{sec lemma 03}, we have
\begin{equation}\label{bound aux 01}
\begin{split}
&\Vert (\mu I + \tilde{\mathcal{A}}_{\epsilon}(t))^{-1} \tilde{\mathcal{A}}_{\epsilon}(t) \Vert_{\mathcal{L}(Y_0)} \leq 1 + \vert\mu\vert \Vert (\mu I + \tilde{\mathcal{A}}_{\epsilon}(t))^{-1} \Vert_{\mathcal{L}(Y_0)} \leq 1 + \overline{C},
\end{split}
\end{equation}
where $\hat{C} := 1 + \overline{C} >0$ is independent of $t\in\mathbb{R}$ and $\epsilon\in [0, 1]$. Analogously, we also have
\begin{equation}\label{bound aux 02}
\Vert \tilde{\mathcal{A}}_0(t) (\mu I + \tilde{\mathcal{A}}_0(t))^{-1} \Vert_{\mathcal{L}(Y_0)} \leq \hat{C}.
\end{equation}

Thus, combining \eqref{dif resolvent}, \eqref{bound aux 01} and \eqref{bound aux 02}, together with Proposition \ref{convergence inverses}, we get
\[
\Vert (\mu I + \tilde{\mathcal{A}}_{\epsilon}(t))^{-1} - 
(\mu I + \tilde{\mathcal{A}}_0(t))^{-1} \Vert_{\mathcal{L}(Y_0)} \rightarrow 0 
\quad \mbox{as} \quad \epsilon\rightarrow 0^{+}
\]
and, consequently, it follows by \eqref{norm dif semigroups} and the Dominated Convergence Theorem that
\[
\begin{split}
&\Vert e^{-s \tilde{\mathcal{A}}_{\epsilon}(t)} - e^{-s \tilde{\mathcal{A}}_0(t)} 
\Vert_{\mathcal{L}(Y_0)} \leq \frac{1}{2\pi}\int_{\Gamma'} \vert e^{\mu s}\vert \Vert 
(\mu I + \tilde{\mathcal{A}}_{\epsilon}(t))^{-1} - 
(\mu I + \tilde{\mathcal{A}}_0(t))^{-1} \Vert_{\mathcal{L}(Y_0)} \, d\mu 
\xrightarrow{\epsilon \rightarrow 0^+} 0
\end{split}
\]
uniformly for $s\geq 0$ on compact subsets of $\mathbb{R}_+$, which yields the desired result.
\end{proof}

%{\color{magenta}
%\begin{remark} \rm
%The path integral $\int_{\Gamma}$ in the proof of Proposition \ref{convergence linear semigroups} does not depend on the choice of the curve $\Gamma$ that we take inside the sector $\Sigma_a$ (see Remark \ref{remark sector}).
%\end{remark}
%}

\section{Exponential dichotomy and lower semicontinuity of pullback attractors}\label{sub exponential dichotomy}

\begin{proposition}\label{uniform holder A til}
The map $\mathbb{R}\ni t\mapsto \tilde{\mathcal{A}}_{\epsilon}(t)$ is uniformly H\"{o}lder continuous in $Y_0,$ for each $\epsilon\in [0, 1],$ that is, for any $T>0$
\[
\Vert [ \tilde{\mathcal{A}}_{\epsilon}(t) - \tilde{\mathcal{A}}_{\epsilon}(s) ] 
\tilde{\mathcal{A}}_{\epsilon}^{-1}(\tau) \Vert_{\mathcal{L}(Y_0)} \leq 
C\vert t-s\vert^{\beta}
\]
for every $t, s, \tau \in [-T, T]$, where $C > 0$ and $0 < \beta \leq 1$ $($both independent of $T$ and $\epsilon$$)$ come from condition \ref{hol-a}.
\end{proposition}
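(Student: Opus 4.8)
The plan is to reduce the statement to the computation already performed in Lemma \ref{Lemma 3.3 aux AL2}, exploiting that the perturbation $B$ in \eqref{op B} does not depend on time. The first step is to observe that, since $B$ is $t$-independent, one has $\tilde{\mathcal{A}}_{\epsilon}(t) - \tilde{\mathcal{A}}_{\epsilon}(s) = \mathcal{A}_{\epsilon}(t) - \mathcal{A}_{\epsilon}(s)$ for all $t, s\in\mathbb{R}$. By the computation carried out in Lemma \ref{Lemma 3.3 aux AL2}, this difference is
\[
\tilde{\mathcal{A}}_{\epsilon}(t) - \tilde{\mathcal{A}}_{\epsilon}(s) = [a_{\epsilon}(t) - a_{\epsilon}(s)]
\begin{bmatrix}
0 & 0 & 0 & 0 \\
0 & 0 & 0 & A^{1/2} \\
0 & 0 & 0 & 0 \\
0 & -A^{1/2} & 0 & 0
\end{bmatrix},
\]
whose only nonzero rows are the second and the fourth; applied to a vector, these rows read off, respectively, its fourth and second entries.

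The second step is to recall the explicit form of $\tilde{\mathcal{A}}_{\epsilon}^{-1}(\tau)$ from \eqref{formula inversa A} and to notice the key point: its second and fourth rows are exactly $\begin{bmatrix} -I & 0 & 0 & 0\end{bmatrix}$ and $\begin{bmatrix} 0 & 0 & -I & 0\end{bmatrix}$, precisely as for the unperturbed inverse $\mathcal{A}_{\epsilon}^{-1}(\tau)$ in Lemma \ref{Lemma 3.3 aux AL2}. The perturbation by $B$ only modifies the first and third rows of the inverse (replacing $(A+I)^{-1}$ by $(A+I - f^{\prime}(u_j^{\ast}))^{-1}$), and these rows are never consulted by the difference operator above. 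Hence, for $\vartheta = \begin{bmatrix} u & v & w & z\end{bmatrix}^T \in Y_0$, a direct multiplication gives
\[
[\tilde{\mathcal{A}}_{\epsilon}(t) - \tilde{\mathcal{A}}_{\epsilon}(s)]\,\tilde{\mathcal{A}}_{\epsilon}^{-1}(\tau)\,\vartheta = [a_{\epsilon}(t) - a_{\epsilon}(s)]
\begin{bmatrix} 0 \\ -A^{1/2}w \\ 0 \\ A^{1/2}u \end{bmatrix},
\]
identical in form to the vector obtained in Lemma \ref{Lemma 3.3 aux AL2}.

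Finally, I would take the $Y_0$ norm of this vector, use the identities $\|A^{1/2}w\|_X = \|w\|_{X^{1/2}}$ and $\|A^{1/2}u\|_X = \|u\|_{X^{1/2}}$, and bound the sum of these two norms by $\|\vartheta\|_{Y_0}$, obtaining
\[
\Vert [\tilde{\mathcal{A}}_{\epsilon}(t) - \tilde{\mathcal{A}}_{\epsilon}(s)]\,\tilde{\mathcal{A}}_{\epsilon}^{-1}(\tau)\,\vartheta \Vert_{Y_0} \leq |a_{\epsilon}(t) - a_{\epsilon}(s)|\,\Vert \vartheta \Vert_{Y_0}.
\]
Taking the supremum over $\|\vartheta\|_{Y_0} \leq 1$ and then invoking the Hölder condition \eqref{hol-a} yields the claimed bound $C|t-s|^{\beta}$, with $C$ and $\beta$ as in \eqref{hol-a}, which is moreover uniform in $\tau$ and in $\epsilon\in[0,1]$. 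I do not expect a genuine obstacle here: the entire content is the structural remark that the time-dependence of $\tilde{\mathcal{A}}_{\epsilon}$ lives solely in the coupling coefficient $a_{\epsilon}(t)$ while $B$ is time-independent, so that the estimate of Lemma \ref{Lemma 3.3 aux AL2} transfers almost verbatim. The only point demanding a moment's care is checking that the $f^{\prime}(u_j^{\ast})$-dependent entries of $\tilde{\mathcal{A}}_{\epsilon}^{-1}(\tau)$ do not enter the final product, which is exactly the observation made in the second step.
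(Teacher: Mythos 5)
Your proposal is correct and follows essentially the same route as the paper: the paper's proof of Proposition \ref{uniform holder A til} likewise writes out $\tilde{\mathcal{A}}_{\epsilon}(t) - \tilde{\mathcal{A}}_{\epsilon}(s)$ as $[a_{\epsilon}(t)-a_{\epsilon}(s)]$ times the fixed matrix with $\pm A^{1\slash 2}$ entries, multiplies by the explicit inverse \eqref{formula inversa A}, obtains the vector $[0,\, -A^{1\slash 2}w,\, 0,\, A^{1\slash 2}u]^T$, and concludes via \eqref{hol-a} exactly as you do. Your only refinement --- observing in advance that $B$ is time-independent and that the $f^{\prime}(u_j^{\ast})$-dependent rows of $\tilde{\mathcal{A}}_{\epsilon}^{-1}(\tau)$ are annihilated in the product, so the computation of Lemma \ref{Lemma 3.3 aux AL2} transfers verbatim --- is a correct and slightly more economical packaging of the same calculation.
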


\begin{proof}
Let $T>0$ be fixed and arbitrary, and let $t, s, \tau \in [-T, T]$. 
Since 
\[
\tilde{\mathcal{A}}_{\epsilon}(t) - \tilde{\mathcal{A}}_{\epsilon}(s) \, \!\!=\!\! \,
[a_{\epsilon}(t) - a_{\epsilon}(s)]
\begin{bmatrix}
0 & 0 & 0 & 0 \\
0 & 0 & 0 & A^{1\slash 2} \\
0 & 0 & 0 & 0 \\
0 & -A^{1\slash 2} & 0 & 0
\end{bmatrix} 
\]
and
\[
\small
\tilde{\mathcal{A}}_{\epsilon}^{-1}(\tau) =
\begin{bmatrix}
\eta A^{1\slash 2} (A+I - f^{\prime}(u_j^{\ast}))^{-1} & (A+I - f^{\prime}(u_j^{\ast}))^{-1} & a_{\epsilon}(\tau)A^{1\slash 2} (A+I - f^{\prime}(u_j^{\ast}))^{-1} & 0 \\
-I                                           & 0                & 0                                          & 0 \\
- a_{\epsilon}(\tau) A^{-1\slash 2}                 & 0               & \eta A^{-1\slash 2}            & A^{-1} \\
0                                            & 0               & -I                                         & 0
\end{bmatrix}, 
\]
we obtain
\[
\begin{split}
\left\Vert [ \tilde{\mathcal{A}}_{\epsilon}(t) - \tilde{\mathcal{A}}_{\epsilon}(s) ] 
\tilde{\mathcal{A}}_{\epsilon}^{-1}(\tau) 
\begin{bmatrix}
u\\ v\\ w\\ z
\end{bmatrix}
\right\Vert_{Y_0} 
&= \left\Vert [a_{\epsilon}(t) - a_{\epsilon}(s)] 
\begin{bmatrix}
0 & 0 & 0 & 0 \\
0 & 0 & -A^{1\slash 2} & 0 \\
0 & 0 & 0 & 0 \\
A^{1\slash 2} & 0 & 0 & 0
\end{bmatrix} 
\begin{bmatrix}
u\\ v\\ w\\ z
\end{bmatrix}
\right\Vert_{Y_0} \\
&= \vert a_{\epsilon}(t) - a_{\epsilon}(s) \vert 
\left\Vert
\begin{bmatrix}
0\\ -A^{1\slash 2}w\\ 0\\ A^{1\slash 2}u
\end{bmatrix}
\right\Vert_{Y_0} \\
&= \vert a_{\epsilon}(t) - a_{\epsilon}(s) \vert 
\left( \left\Vert -A^{1\slash 2}w \right\Vert_X 
+ \left\Vert A^{1\slash 2}u \right\Vert_X \right) \\
&\leq C\vert t-s\vert^{\beta} 
\left( \left\Vert u\right\Vert_{X^{1\slash 2}} + \left\Vert v\right\Vert_X 
+ \left\Vert w\right\Vert_{X^{1\slash 2}} 
+ \left\Vert z\right\Vert_X \right) \\
&= C\vert t-s\vert^{\beta} 
\left\Vert
\begin{bmatrix}
u\\ v\\ w\\ z
\end{bmatrix} 
\right\Vert_{Y_0}
\end{split}
\]
for every $[u\,\, v\,\, w\,\, z]^T \in Y_0$,
%$
%\begin{bmatrix}
%u\\ v\\ w\\ z
%\end{bmatrix} 
%\in Y_0,$ 
where we have used the H\"{o}lder continuity condition of the function $a_{\epsilon}$ uniformly in $\epsilon$. 
Thus, it follows that
\[
\Vert [\tilde{\mathcal{A}}_{\epsilon}(t) - \tilde{\mathcal{A}}_{\epsilon}(s)] 
\tilde{\mathcal{A}}_{\epsilon}^{-1}(\tau) 
\Vert_{\mathcal{L}(Y_0)} \leq C\vert t-s\vert^{\beta}
\]
for every $t, s, \tau \in [-T, T],$ and the result is proved.
\end{proof}

The next result uses Proposition \ref{convergence linear semigroups} to obtain the convergence of the family of linear semigroups in the norm topology of $\mathcal{L}(Y_1)$, which will be helpful in the proof of Proposition \ref{convergence processes linearization}.

\begin{lemma}\label{conv semigroups in Y0 Y1}
For $s\geq\tau\in\mathbb{R}$, there holds
\begin{equation}\label{conv linear process 070}
\Vert e^{-(s-\tau) \tilde{\mathcal{A}}_{\epsilon}(\tau)} - e^{-(s-\tau) \tilde{\mathcal{A}}_0(\tau)} \Vert_{\mathcal{L}(Y_0, Y_1)} 
\rightarrow 0 \quad \mbox{as} \quad \epsilon\rightarrow 0^{+}.
\end{equation}
\end{lemma}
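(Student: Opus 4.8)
The plan is to start from the Dunford integral representation \eqref{escrita da exponencial}, used simultaneously for $\tilde{\mathcal{A}}_{\epsilon}(\tau)$ and $\tilde{\mathcal{A}}_0(\tau)$ over one common contour. Since the sectorial estimate of Lemma \ref{sec lemma 03} and Remark \ref{remark sector} holds on the sector $\Sigma_a$ \emph{uniformly for all} $\epsilon\in[0,1]$ (including $\epsilon=0$), a contour $\Gamma'\subset\Sigma_a$ with $\arg\mu\to\pm\theta$, $\theta\in(\pi/2,\pi)$, lies in $\rho(-\tilde{\mathcal{A}}_{\epsilon}(\tau))\cap\rho(-\tilde{\mathcal{A}}_0(\tau))$ for every $\epsilon$. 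Writing $R_{\epsilon}:=(\mu I+\tilde{\mathcal{A}}_{\epsilon}(\tau))^{-1}$ and $R_0:=(\mu I+\tilde{\mathcal{A}}_0(\tau))^{-1}$, subtracting the two representations gives
\[
e^{-(s-\tau)\tilde{\mathcal{A}}_{\epsilon}(\tau)}-e^{-(s-\tau)\tilde{\mathcal{A}}_0(\tau)}=\frac{1}{2\pi i}\int_{\Gamma'}e^{\mu(s-\tau)}(R_{\epsilon}-R_0)\,d\mu,
\]
so that
\[
\Vert e^{-(s-\tau)\tilde{\mathcal{A}}_{\epsilon}(\tau)}-e^{-(s-\tau)\tilde{\mathcal{A}}_0(\tau)}\Vert_{\mathcal{L}(Y_0,Y_1)}\leq\frac{1}{2\pi}\int_{\Gamma'}\vert e^{\mu(s-\tau)}\vert\,\Vert R_{\epsilon}-R_0\Vert_{\mathcal{L}(Y_0,Y_1)}\,\vert d\mu\vert.
\]
Everything then reduces to a bound on $\Vert R_{\epsilon}-R_0\Vert_{\mathcal{L}(Y_0,Y_1)}$ that is uniform in $\mu\in\Gamma'$ and $\tau\in\mathbb{R}$ and vanishes as $\epsilon\to0^{+}$.

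To measure the $Y_1$-norm I would use that $Y_1=D(\tilde{\mathcal{A}}_0(\tau))$ carries a graph norm equivalent to the $Y_1$-norm \emph{uniformly in} $\tau$: since $\Vert\tilde{\mathcal{A}}_0^{-1}(\tau)\Vert_{\mathcal{L}(Y_0,Y_1)}\leq C_*$ (the $\tau$-dependence of the inverse \eqref{formula inversa A} enters only through the uniformly bounded coefficient $a_0(\tau)$, see \eqref{function a is bounded}, the energy-space estimates being those of \cite{BNS}), one has $\Vert z\Vert_{Y_1}\leq C_*\Vert\tilde{\mathcal{A}}_0(\tau)z\Vert_{Y_0}$ for all $z\in Y_1$. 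Applying this to $z=(R_{\epsilon}-R_0)y$, which lies in $Y_1$ because $R_0$ maps $Y_0$ into $D(\tilde{\mathcal{A}}_0(\tau))=Y_1$, reduces the task to estimating $\Vert\tilde{\mathcal{A}}_0(\tau)(R_{\epsilon}-R_0)\Vert_{\mathcal{L}(Y_0)}$.

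Next I would exploit the first-order structure of the difference. By Proposition \ref{uniform holder A til} one has $\tilde{\mathcal{A}}_0(\tau)-\tilde{\mathcal{A}}_{\epsilon}(\tau)=[a_0(\tau)-a_{\epsilon}(\tau)]D$, where $D$ is the fixed matrix operator with entries $A^{1/2}$ in positions $(2,4)$ and $-A^{1/2}$ in position $(4,2)$, so the resolvent identity yields $R_{\epsilon}-R_0=[a_0(\tau)-a_{\epsilon}(\tau)]\,R_{\epsilon}DR_0$. Left-multiplying by $\tilde{\mathcal{A}}_0(\tau)$ and inserting $\tilde{\mathcal{A}}_{\epsilon}^{-1}(\tau)\tilde{\mathcal{A}}_{\epsilon}(\tau)$ and $\tilde{\mathcal{A}}_0^{-1}(\tau)\tilde{\mathcal{A}}_0(\tau)$ (legitimate since $R_{\epsilon},R_0$ map into $Y_1=D(\tilde{\mathcal{A}}_{\epsilon}(\tau))=D(\tilde{\mathcal{A}}_0(\tau))$), I factor
\[
\tilde{\mathcal{A}}_0(\tau)(R_{\epsilon}-R_0)=[a_0(\tau)-a_{\epsilon}(\tau)]\big(\tilde{\mathcal{A}}_0(\tau)\tilde{\mathcal{A}}_{\epsilon}^{-1}(\tau)\big)\big(\tilde{\mathcal{A}}_{\epsilon}(\tau)R_{\epsilon}\big)\big(D\tilde{\mathcal{A}}_0^{-1}(\tau)\big)\big(\tilde{\mathcal{A}}_0(\tau)R_0\big).
\]
Each bracketed factor is bounded uniformly in $\mu,\tau,\epsilon$: $\tilde{\mathcal{A}}_0(\tau)\tilde{\mathcal{A}}_{\epsilon}^{-1}(\tau)=I-[\tilde{\mathcal{A}}_{\epsilon}(\tau)-\tilde{\mathcal{A}}_0(\tau)]\tilde{\mathcal{A}}_{\epsilon}^{-1}(\tau)$ has norm $\leq1+\Vert a_{\epsilon}-a_0\Vert_{L^{\infty}(\mathbb{R})}$ by Proposition \ref{convergence operators e and 0}; the factors $\tilde{\mathcal{A}}_{\epsilon}(\tau)R_{\epsilon}$ and $\tilde{\mathcal{A}}_0(\tau)R_0$ are bounded by $\hat{C}$ as in \eqref{bound aux 01}--\eqref{bound aux 02}; and since the second and fourth rows of $\tilde{\mathcal{A}}_0^{-1}(\tau)$ applied to $(u,v,w,z)^T$ are $-u$ and $-w$, a direct computation gives $D\tilde{\mathcal{A}}_0^{-1}(\tau)(u,v,w,z)^T=(0,-A^{1/2}w,0,A^{1/2}u)^T$, whence $\Vert D\tilde{\mathcal{A}}_0^{-1}(\tau)\Vert_{\mathcal{L}(Y_0)}\leq1$. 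Therefore $\Vert\tilde{\mathcal{A}}_0(\tau)(R_{\epsilon}-R_0)\Vert_{\mathcal{L}(Y_0)}\leq C\Vert a_{\epsilon}-a_0\Vert_{L^{\infty}(\mathbb{R})}$, and hence $\Vert R_{\epsilon}-R_0\Vert_{\mathcal{L}(Y_0,Y_1)}\leq C_*C\Vert a_{\epsilon}-a_0\Vert_{L^{\infty}(\mathbb{R})}$, uniformly in $\mu\in\Gamma'$ and $\tau\in\mathbb{R}$.

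Finally, for fixed $s>\tau$ the kernel $\vert e^{\mu(s-\tau)}\vert$ decays exponentially along $\Gamma'$ (as $\mathrm{Re}\,\mu\to-\infty$ with $\arg\mu\to\pm\theta$, $\theta>\pi/2$), so $\int_{\Gamma'}\vert e^{\mu(s-\tau)}\vert\,\vert d\mu\vert=:I(s-\tau)<\infty$; the case $s=\tau$ is trivial. Combining the estimates gives
\[
\Vert e^{-(s-\tau)\tilde{\mathcal{A}}_{\epsilon}(\tau)}-e^{-(s-\tau)\tilde{\mathcal{A}}_0(\tau)}\Vert_{\mathcal{L}(Y_0,Y_1)}\leq\frac{C_*C}{2\pi}\,I(s-\tau)\,\Vert a_{\epsilon}-a_0\Vert_{L^{\infty}(\mathbb{R})}\xrightarrow{\epsilon\to0^{+}}0
\]
by \eqref{norm function a}, which is the claim. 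I expect the main obstacle to be the uniform-in-$\tau$ graph-norm equivalence $\Vert z\Vert_{Y_1}\leq C_*\Vert\tilde{\mathcal{A}}_0(\tau)z\Vert_{Y_0}$: this is precisely where the coupling and damping being of the same fractional order $A^{1/2}$ makes the $Y_0\to Y_1$ regularity delicate, and it must be drawn from the uniform sectorial framework of \cite{BNS} (uniform boundedness of $a_0$ and of $\tilde{\mathcal{A}}_0^{-1}(\tau)$ between the energy spaces) rather than from a naive commutator computation with $f'(u_j^{\ast})$; the remaining factorization, by contrast, is entirely elementary.
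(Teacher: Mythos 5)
Your proof is correct, but it follows a genuinely different route from the paper's. The paper never returns to the Dunford integral for this lemma: it instead writes the four-term telescoping identity
\[
\begin{split}
\tilde{\mathcal{A}}_{\epsilon}(\tau)\bigl[e^{-(s-\tau)\tilde{\mathcal{A}}_{\epsilon}(\tau)}-e^{-(s-\tau)\tilde{\mathcal{A}}_0(\tau)}\bigr]
&=\tilde{\mathcal{A}}_{\epsilon}(\tau)e^{-\frac{s-\tau}{2}\tilde{\mathcal{A}}_{\epsilon}(\tau)}\bigl[e^{-\frac{s-\tau}{2}\tilde{\mathcal{A}}_{\epsilon}(\tau)}-e^{-\frac{s-\tau}{2}\tilde{\mathcal{A}}_0(\tau)}\bigr]\\
&\quad+e^{-\frac{s-\tau}{2}\tilde{\mathcal{A}}_{\epsilon}(\tau)}\bigl[\tilde{\mathcal{A}}_{\epsilon}(\tau)-\tilde{\mathcal{A}}_0(\tau)\bigr]e^{-\frac{s-\tau}{2}\tilde{\mathcal{A}}_0(\tau)}\\
&\quad+\bigl[e^{-\frac{s-\tau}{2}\tilde{\mathcal{A}}_{\epsilon}(\tau)}-e^{-\frac{s-\tau}{2}\tilde{\mathcal{A}}_0(\tau)}\bigr]\tilde{\mathcal{A}}_0(\tau)e^{-\frac{s-\tau}{2}\tilde{\mathcal{A}}_0(\tau)}\\
&\quad+\bigl[\tilde{\mathcal{A}}_0(\tau)-\tilde{\mathcal{A}}_{\epsilon}(\tau)\bigr]e^{-(s-\tau)\tilde{\mathcal{A}}_0(\tau)},
\end{split}
\]
and then finishes by combining the already-established $\mathcal{L}(Y_0)$-convergence of the semigroups (Proposition \ref{convergence linear semigroups}), the uniform smoothing bound \eqref{conv linear process 09 extra}, Proposition \ref{convergence operators e and 0} and condition \eqref{norm function a}. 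You, by contrast, work at the resolvent level inside the Dunford representation \eqref{escrita da exponencial} — in effect upgrading the paper's own proof of Proposition \ref{convergence linear semigroups} from $\mathcal{L}(Y_0)$ to $\mathcal{L}(Y_0,Y_1)$ — replacing the paper's difference-of-inverses factorization by the resolvent identity with the explicit rank-structure $\tilde{\mathcal{A}}_0(\tau)-\tilde{\mathcal{A}}_{\epsilon}(\tau)=[a_0(\tau)-a_{\epsilon}(\tau)]D$ and the clean bounds $\Vert D\tilde{\mathcal{A}}_0^{-1}(\tau)\Vert_{\mathcal{L}(Y_0)}\leq 1$, $\Vert\tilde{\mathcal{A}}_{\epsilon}(\tau)R_{\epsilon}\Vert,\Vert\tilde{\mathcal{A}}_0(\tau)R_0\Vert\leq\hat{C}$. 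All your individual steps check out against the paper's machinery (the resolvent-identity factorization is algebraically legitimate since both resolvents map into $Y_1$, and your computation of $D\tilde{\mathcal{A}}_0^{-1}(\tau)$ agrees with \eqref{formula inversa A} and mirrors Proposition \ref{uniform holder A til}). What your route buys is an explicit rate, $\Vert e^{-(s-\tau)\tilde{\mathcal{A}}_{\epsilon}(\tau)}-e^{-(s-\tau)\tilde{\mathcal{A}}_0(\tau)}\Vert_{\mathcal{L}(Y_0,Y_1)}\leq C\,I(s-\tau)\Vert a_{\epsilon}-a_0\Vert_{L^{\infty}(\mathbb{R})}$, uniform in $\tau$ — more than the qualitative convergence asserted in the lemma, and quantitative enough to feed directly into the dominated-convergence step of Proposition \ref{convergence processes linearization}; the paper's route is shorter because Proposition \ref{convergence linear semigroups} is already available. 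One caveat, which you correctly flag yourself: your reduction to the graph norm requires $\Vert\tilde{\mathcal{A}}_0^{-1}(\tau)\Vert_{\mathcal{L}(Y_0,Y_1)}\leq C_{*}$ uniformly in $\tau$; the paper uses exactly the same identification implicitly (its displayed estimate before \eqref{conv linear process 09 extra} writes $\Vert e^{-(s-\tau)\tilde{\mathcal{A}}_{\epsilon}(\tau)}x\Vert_{Y_1}=\Vert\tilde{\mathcal{A}}_{\epsilon}(\tau)e^{-(s-\tau)\tilde{\mathcal{A}}_{\epsilon}(\tau)}x\Vert_{Y_0}$), so you are on equal footing there, but in a self-contained write-up you should justify this uniform equivalence from \eqref{formula inversa A}, the uniform bounds \eqref{function a is bounded} and the hyperbolicity condition \eqref{hyper condition}, as you indicate.
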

\begin{proof}
Note that
\[
\begin{aligned}
\tilde{\mathcal{A}}_{\epsilon}(\tau) \Big[e^{-(s-\tau) \tilde{\mathcal{A}}_{\epsilon}(\tau)} - e^{-(s-\tau) \tilde{\mathcal{A}}_0(\tau)}\Big] &= 
\tilde{\mathcal{A}}_{\epsilon}(\tau) e^{-\frac{(s-\tau)}{2} \tilde{\mathcal{A}}_{\epsilon}(\tau)} \Big[ e^{-\frac{(s-\tau)}{2} \tilde{\mathcal{A}}_{\epsilon}(\tau)} - e^{-\frac{(s-\tau)}{2} \tilde{\mathcal{A}}_{0}(\tau)}\Big] \\
& \quad + e^{-\frac{(s-\tau)}{2} \tilde{\mathcal{A}}_{\epsilon}(\tau)} \Big[ \tilde{\mathcal{A}}_{\epsilon}(\tau) - \tilde{\mathcal{A}}_{0}(\tau)\Big] e^{-\frac{(s-\tau)}{2} \tilde{\mathcal{A}}_{0}(\tau)} \\
& \quad + \Big[e^{-\frac{(s-\tau)}{2} \tilde{\mathcal{A}}_{\epsilon}(\tau)} - e^{-\frac{(s-\tau)}{2} \tilde{\mathcal{A}}_{0}(\tau)} \Big] \tilde{\mathcal{A}}_{0}(\tau) e^{-\frac{(s-\tau)}{2} \tilde{\mathcal{A}}_{0}(\tau)}\\
& \quad + \Big[\tilde{\mathcal{A}}_{0}(\tau) - \tilde{\mathcal{A}}_{\epsilon}(\tau) \Big] e^{-(s-\tau) \tilde{\mathcal{A}}_{0}(\tau)}.
%e^{-\frac{(s-\tau)}{2} \tilde{\mathcal{A}}_{\epsilon}(\tau)}
%
%e^{-(s-\tau) \tilde{\mathcal{A}}_{\epsilon}(\tau)}
%
%\tilde{\mathcal{A}}_{\epsilon}(\tau)
\end{aligned}
\]

Due to Lemma \ref{sec lemma 03} and \eqref{escrita da exponencial}, we have the following estimate
\[
\begin{split}
\Vert e^{-(s-\tau) \tilde{\mathcal{A}}_{\epsilon}(\tau)}x \Vert_{Y_1} &= 
\Vert \tilde{\mathcal{A}}_{\epsilon}(\tau) 
e^{-(s-\tau) \tilde{\mathcal{A}}_{\epsilon}(\tau)}x \Vert_{Y_0} \leq M_0 (s-\tau)^{-1} e^{-\omega (s-\tau)} \Vert x\Vert_{Y_0}
\end{split}
\]
for every $x\in Y_0$, where the constants $M_0, \omega >0$ are both independent of the parameters $t$ and $\epsilon$, that is,
\begin{equation}\label{conv linear process 09 extra}
\Vert e^{-(s-\tau) \tilde{\mathcal{A}}_{\epsilon}(\tau)} \Vert_{\mathcal{L}(Y_0, Y_1)} 
\leq M_0 (s-\tau)^{-1} e^{-\omega (s-\tau)}.
\end{equation}

Using Propositions \ref{convergence operators e and 0} and \ref{convergence linear semigroups}, condition \eqref{norm function a} and \eqref{conv linear process 09 extra} the result is complete.
\end{proof}

\begin{proposition}\label{convergence processes linearization}
For $\epsilon\in [0, 1],$ let 
$
\{ \tilde{L}_{\epsilon}(t, \tau) \colon t\geq \tau\in\mathbb{R} \}
$ 
be the linear evolution process in $Y_0$ corresponding to the linearized problem \eqref{linearized abstract ode}. Then, for each $t\geq\tau\in\mathbb{R}$,
\[
\Vert \tilde{L}_{\epsilon}(t, \tau) - \tilde{L}_0(t, \tau) \Vert_{\mathcal{L}(Y_0)} 
\rightarrow 0 \quad \mbox{as} \quad \epsilon\rightarrow 0^{+}.
\]
\end{proposition}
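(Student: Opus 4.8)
The plan is to represent $\tilde L_\epsilon(t,\tau)$ by the variation-of-constants (Tanabe type) integral equation used in the proof of Proposition \ref{Prop 3.5 rel}, and then to compare the two processes through a weakly singular Gronwall argument. Fix $t\geq\tau$, so that all time arguments lie in a compact interval $[-T,T]$. The uniform sectorial estimates of Lemma \ref{sec lemma 03} (together with Remark \ref{remark sector}) and the uniform H\"older continuity of Proposition \ref{uniform holder A til} guarantee, as in \cite{Henry,Sobo,Nascimento}, that
\[
\tilde L_\epsilon(t,\tau) = e^{-(t-\tau)\tilde{\mathcal A}_\epsilon(\tau)} + \int_\tau^t \tilde L_\epsilon(t,\sigma)\,\Psi_\epsilon(\sigma,\tau)\,d\sigma, \qquad \Psi_\epsilon(\sigma,\tau) := [\tilde{\mathcal A}_\epsilon(\tau) - \tilde{\mathcal A}_\epsilon(\sigma)]e^{-(\sigma-\tau)\tilde{\mathcal A}_\epsilon(\tau)}.
\]
Writing $\Psi_\epsilon(\sigma,\tau) = [\tilde{\mathcal A}_\epsilon(\tau)-\tilde{\mathcal A}_\epsilon(\sigma)]\tilde{\mathcal A}_\epsilon^{-1}(\tau)\,\tilde{\mathcal A}_\epsilon(\tau)e^{-(\sigma-\tau)\tilde{\mathcal A}_\epsilon(\tau)}$ and combining Proposition \ref{uniform holder A til} with the smoothing estimate \eqref{conv linear process 09 extra}, I would record the uniform bounds $\|\Psi_\epsilon(\sigma,\tau)\|_{\mathcal L(Y_0)}\leq C(\sigma-\tau)^{\beta-1}$ and $\|\tilde L_\epsilon(t,\tau)\|_{\mathcal L(Y_0)}\leq C$, with $C$ independent of $\epsilon$.

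Subtracting the representations for $\epsilon$ and $0$ and setting $D_\epsilon(t,\tau):=\tilde L_\epsilon(t,\tau)-\tilde L_0(t,\tau)$, I would arrive at
\[
D_\epsilon(t,\tau) = \big[e^{-(t-\tau)\tilde{\mathcal A}_\epsilon(\tau)} - e^{-(t-\tau)\tilde{\mathcal A}_0(\tau)}\big] + \int_\tau^t \tilde L_0(t,\sigma)\,[\Psi_\epsilon(\sigma,\tau)-\Psi_0(\sigma,\tau)]\,d\sigma + \int_\tau^t D_\epsilon(t,\sigma)\,\Psi_\epsilon(\sigma,\tau)\,d\sigma.
\]
The first bracket tends to $0$ by Proposition \ref{convergence linear semigroups}. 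The decisive point is the convergence of the kernels. Factoring $\tilde{\mathcal A}_\epsilon(\tau)-\tilde{\mathcal A}_\epsilon(\sigma)=[a_\epsilon(\tau)-a_\epsilon(\sigma)]N$, where $N$ is the time- and $\epsilon$-independent matrix operator appearing in Proposition \ref{uniform holder A til} (bounded from $Y_1$ into $Y_0$), I would split
\[
\Psi_\epsilon(\sigma,\tau)-\Psi_0(\sigma,\tau) = \big([a_\epsilon(\tau)-a_\epsilon(\sigma)]-[a_0(\tau)-a_0(\sigma)]\big)N e^{-(\sigma-\tau)\tilde{\mathcal A}_\epsilon(\tau)} + [a_0(\tau)-a_0(\sigma)]\,N\big[e^{-(\sigma-\tau)\tilde{\mathcal A}_\epsilon(\tau)}-e^{-(\sigma-\tau)\tilde{\mathcal A}_0(\tau)}\big].
\]
For each fixed $\sigma>\tau$ the first summand vanishes in the limit since $\|a_\epsilon-a_0\|_{L^{\infty}(\mathbb{R})}\to0$ by \eqref{norm function a}, while the second vanishes precisely because $N$ is bounded $Y_1\to Y_0$ and $\|e^{-(\sigma-\tau)\tilde{\mathcal A}_\epsilon(\tau)}-e^{-(\sigma-\tau)\tilde{\mathcal A}_0(\tau)}\|_{\mathcal L(Y_0,Y_1)}\to0$ by Lemma \ref{conv semigroups in Y0 Y1}. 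This is the \textbf{main obstacle}: the kernel carries the unbounded operator $N$, which cannot be estimated in $\mathcal L(Y_0)$, so norm-resolvent convergence alone is insufficient, and the $\mathcal L(Y_0,Y_1)$-convergence of Lemma \ref{conv semigroups in Y0 Y1} (used to absorb the half-derivative carried by $N$) is indispensable.

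With the uniform domination $\|\Psi_\epsilon(\sigma,\tau)-\Psi_0(\sigma,\tau)\|_{\mathcal L(Y_0)}\leq 2C(\sigma-\tau)^{\beta-1}$ and the pointwise convergence just obtained, the Dominated Convergence Theorem gives $\int_\tau^t\|\Psi_\epsilon(\sigma,\tau)-\Psi_0(\sigma,\tau)\|_{\mathcal L(Y_0)}\,d\sigma\to0$, so the sum $a_\epsilon(t,\tau)$ of the first two terms of $D_\epsilon$ tends to $0$. It then remains to absorb the last term: using $\|\Psi_\epsilon(\sigma,\tau)\|_{\mathcal L(Y_0)}\leq C(\sigma-\tau)^{\beta-1}$ I would obtain the weakly singular integral inequality $\|D_\epsilon(t,\tau)\|_{\mathcal L(Y_0)}\leq a_\epsilon(t,\tau)+\int_\tau^t C(\sigma-\tau)^{\beta-1}\|D_\epsilon(t,\sigma)\|_{\mathcal L(Y_0)}\,d\sigma$, which after the substitution $\theta=t-\tau$ is exactly the hypothesis of the generalized (singular) Gronwall inequality \cite[Lemma~7.1.1]{Henry}. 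Its conclusion dominates $\|D_\epsilon(t,\tau)\|_{\mathcal L(Y_0)}$ by a Mittag--Leffler convolution of $a_\epsilon$; since $a_\epsilon(t,\tau)\to0$, this yields $\|\tilde L_\epsilon(t,\tau)-\tilde L_0(t,\tau)\|_{\mathcal L(Y_0)}\to0$ as $\epsilon\to0^{+}$, completing the argument.
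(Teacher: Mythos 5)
Your proposal is correct and follows essentially the same route as the paper's proof: the same Tanabe-type integral representation for $\tilde{L}_{\epsilon}(t,\tau)$, the same three ingredients (the $\mathcal{L}(Y_0)$ semigroup convergence of Proposition \ref{convergence linear semigroups}, the $\mathcal{L}(Y_0,Y_1)$ convergence of Lemma \ref{conv semigroups in Y0 Y1} to absorb the unbounded coupling operator $\hat{A}$, and the uniform H\"older estimate of Proposition \ref{uniform holder A til} together with the uniform bound on $\tilde{L}_{\epsilon}$ from \cite{Nascimento}), and the same singular Gronwall conclusion. Your grouping of the cross terms ($\int D_{\epsilon}\Psi_{\epsilon} + \int \tilde{L}_0(\Psi_{\epsilon}-\Psi_0)$, with $\Psi_{\epsilon}-\Psi_0$ split into a coefficient part and a semigroup part) is just an algebraic rearrangement of the paper's decomposition into terms (A), (B), (C), and you correctly identify the same main obstacle the paper flags, namely that the kernel carries a half-derivative that forces the use of $\mathcal{L}(Y_0,Y_1)$ convergence rather than convergence in $\mathcal{L}(Y_0)$ alone.
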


\begin{proof}
According to \cite{Nascimento}, the process $\{ \tilde{L}_{\epsilon}(t, \tau) \colon t\geq \tau\in\mathbb{R} \}$ can be written as
$$
\tilde{L}_{\epsilon}(t, \tau) = e^{-(t-\tau) \tilde{\mathcal{A}}_{\epsilon}(\tau)}
+ \int_{\tau}^t \tilde{L}_{\epsilon}(t, s) [ \tilde{\mathcal{A}}_{\epsilon}(\tau) - \tilde{\mathcal{A}}_{\epsilon}(s) ] e^{-(s-\tau) \tilde{\mathcal{A}}_{\epsilon}(\tau)} \, ds, \quad t\geq \tau \in\mathbb{R},
$$
for each $\epsilon \in [0,1],$ where $\tilde{\mathcal{A}}_{\epsilon}(\tau)$ is given as in \eqref{linearized abstract operator}. 
At first place, we have
\begin{equation}\label{conv linear process 01}
\begin{split}
\Vert \tilde{L}_{\epsilon}(t, \tau) - \tilde{L}_0(t, \tau) \Vert_{\mathcal{L}(Y_0)} &\leq \Vert e^{-(t-\tau) \tilde{\mathcal{A}}_{\epsilon}(\tau)} - e^{-(t-\tau) \tilde{\mathcal{A}}_0(\tau)} \Vert_{\mathcal{L}(Y_0)} \\
&+ \Big\Vert \int_{\tau}^t \tilde{L}_{\epsilon}(t, s) [ \tilde{\mathcal{A}}_{\epsilon}(\tau) - \tilde{\mathcal{A}}_{\epsilon}(s) ] e^{-(s-\tau) \tilde{\mathcal{A}}_{\epsilon}(\tau)} \, ds \\
&\quad\quad - \int_{\tau}^t \tilde{L}_0(t, s) [ \tilde{\mathcal{A}}_0(\tau) - \tilde{\mathcal{A}}_0(s) ] e^{-(s-\tau) \tilde{\mathcal{A}}_0(\tau)} \, ds \Big\Vert_{\mathcal{L}(Y_0)}. 
\end{split}
\end{equation}

By Proposition \ref{convergence linear semigroups}, we have
\begin{equation}\label{conv linear process 02}
\Vert e^{-(t-\tau) \tilde{\mathcal{A}}_{\epsilon}(\tau)} - e^{-(t-\tau) \tilde{\mathcal{A}}_0(\tau)} \Vert_{\mathcal{L}(Y_0)} 
\rightarrow 0 \quad \mbox{as} \quad \epsilon\rightarrow 0^{+}.
\end{equation}

Note that the difference between the integrals in \eqref{conv linear process 01} can be written in the following way

\begin{equation}\label{conv linear process 03}
\small
\begin{split}
&\int_{\tau}^t \tilde{L}_{\epsilon}(t, s) [ \tilde{\mathcal{A}}_{\epsilon}(\tau) - \tilde{\mathcal{A}}_{\epsilon}(s) ] e^{-(s-\tau) \tilde{\mathcal{A}}_{\epsilon}(\tau)} \, ds
- \int_{\tau}^t \tilde{L}_{0}(t, s) [ \tilde{\mathcal{A}}_0(\tau) - \tilde{\mathcal{A}}_{0}(s) ] e^{-(s-\tau) \tilde{\mathcal{A}}_{0}(\tau)} \, ds \\
&= \int_{\tau}^t \tilde{L}_{\epsilon}(t, s) [ \tilde{\mathcal{A}}_{\epsilon}(\tau) - \tilde{\mathcal{A}}_{\epsilon}(s)] (e^{-(s-\tau) \tilde{\mathcal{A}}_{\epsilon}(\tau)} - e^{-(s-\tau) \tilde{\mathcal{A}}_0(\tau)} ) \, ds \\
&\quad + \int_{\tau}^t ( \tilde{L}_{\epsilon}(t, s) - \tilde{L}_0(t, s) ) 
[ \tilde{\mathcal{A}}_{\epsilon}(\tau) - \tilde{\mathcal{A}}_{\epsilon}(s) ] 
e^{-(s-\tau) \tilde{\mathcal{A}}_0(\tau)} \, ds \\
&\quad + \int_{\tau}^t \tilde{L}_0(t, s) [ \tilde{\mathcal{A}}_{\epsilon}(\tau) - \tilde{\mathcal{A}}_{\epsilon}(s) ] e^{-(s-\tau) \tilde{\mathcal{A}}_0(\tau)} \, ds \\
&\quad - \int_{\tau}^t \tilde{L}_0(t, s) [ \tilde{\mathcal{A}}_0(\tau) - \tilde{\mathcal{A}}_0(s) ] e^{-(s-\tau) \tilde{\mathcal{A}}_0(\tau)} \, ds. 
\end{split}
\end{equation}

Now, we are going to analyze each one of the integrals that appear in \eqref{conv linear process 03}, following the order listed below \\

\noindent (A) 
$\displaystyle 
\int_{\tau}^t \tilde{L}_0(t, s) [ \tilde{\mathcal{A}}_{\epsilon}(\tau) - \tilde{\mathcal{A}}_{\epsilon}(s) ] e^{-(s-\tau) \tilde{\mathcal{A}}_0(\tau)} \, ds 
- \int_{\tau}^t \tilde{L}_0(t, s) [ \tilde{\mathcal{A}}_0(\tau) - \tilde{\mathcal{A}}_0(s) ] e^{-(s-\tau) \tilde{\mathcal{A}}_0(\tau)} \, ds
$; \\

\noindent (B) 
$\displaystyle 
\int_{\tau}^t ( \tilde{L}_{\epsilon}(t, s) - \tilde{L}_0(t, s) ) 
[ \tilde{\mathcal{A}}_{\epsilon}(\tau) - \tilde{\mathcal{A}}_{\epsilon}(s) ] 
e^{-(s-\tau) \tilde{\mathcal{A}}_0(\tau)} \, ds
$; \\

\noindent (C) 
$\displaystyle 
\int_{\tau}^t \tilde{L}_{\epsilon}(t, s) [ \tilde{\mathcal{A}}_{\epsilon}(\tau) - \tilde{\mathcal{A}}_{\epsilon}(s)] (e^{-(s-\tau) \tilde{\mathcal{A}}_{\epsilon}(\tau)} - e^{-(s-\tau) \tilde{\mathcal{A}}_0(\tau)} ) \, ds.
$

In fact, for item (A), let us denote 
$
\hat{A} \, \!\!=\!\!
\begin{bmatrix}
0     & 0                           & 0 & 0 \\
0     & 0 & 0 & A^{1\slash 2} \\
0     & 0                            & 0 & 0 \\
0     & -A^{1\slash 2} & 0 & 0
\end{bmatrix}
$
and then note that
\[
\begin{split}
&\int_{\tau}^t \tilde{L}_0(t, s) [ \tilde{\mathcal{A}}_{\epsilon}(\tau) - \tilde{\mathcal{A}}_{\epsilon}(s) ] e^{-(s-\tau) \tilde{\mathcal{A}}_{0}(\tau)} \, ds
- \int_{\tau}^t \tilde{L}_0(t, s) [ \tilde{\mathcal{A}}_{0}(\tau) - \tilde{\mathcal{A}}_0(s) ] e^{-(s-\tau) \tilde{\mathcal{A}}_0(\tau)} \, ds \\
&= \int_{\tau}^t \tilde{L}_0(t, s) [a_{\epsilon}(\tau) - a_{\epsilon}(s)] \hat{A} \, e^{-(s-\tau) \tilde{\mathcal{A}}_0(\tau)} \, ds
- \int_{\tau}^t \tilde{L}_0(t, s) [a_0(\tau) - a_0(s)] \hat{A} \, e^{-(s-\tau) \tilde{\mathcal{A}}_0(\tau)} \, ds \\
&= [a_{\epsilon}(\tau) - a_0(\tau)] \int_{\tau}^t \tilde{L}_{0}(t, s) \hat{A} \, e^{-(s-\tau) \tilde{\mathcal{A}}_0(\tau)} \, ds
+ \int_{\tau}^t [a_0(s) - a_{\epsilon}(s)] \tilde{L}_{0}(t, s) \hat{A} \, e^{-(s-\tau) \tilde{\mathcal{A}}_0(\tau)} \, ds
\end{split}
\]
which leads to 
\[
\begin{split}
&\left\Vert \int_{\tau}^t \tilde{L}_{0}(t, s) [ \tilde{\mathcal{A}}_{\epsilon}(\tau) - \tilde{\mathcal{A}}_{\epsilon}(s) ] e^{-(s-\tau) \tilde{\mathcal{A}}_{0}(\tau)} \, ds \right.\\
& \hspace{5cm}\left.- \int_{\tau}^t \tilde{L}_{0}(t, s) [ \tilde{\mathcal{A}}_0(\tau) - \tilde{\mathcal{A}}_0(s) ] 
e^{-(s-\tau) \tilde{\mathcal{A}}_{0}(\tau)} \, ds \right\Vert_{\mathcal{L}(Y_0)} \\
&\leq \left\Vert [a_{\epsilon}(\tau) - a_0(\tau)] \int_{\tau}^t \tilde{L}_{0}(t, s) \hat{A} \, e^{-(s-\tau) \tilde{\mathcal{A}}_{0}(\tau)} \, ds \right\Vert_{\mathcal{L}(Y_0)} \\
&\quad + \left\Vert \int_{\tau}^t [a_0(s) - a_{\epsilon}(s)] \tilde{L}_{0}(t, s) \hat{A} \, e^{-(s-\tau) \tilde{\mathcal{A}}_{0}(\tau)} \, ds \right\Vert_{\mathcal{L}(Y_0)} \\
\end{split}
\]

\[
\begin{split}
&\leq\vert a_{\epsilon}(\tau) - a_0(\tau)\vert \left\Vert \int_{\tau}^t \tilde{L}_{0}(t, s) \hat{A} \, e^{-(s-\tau) \tilde{\mathcal{A}}_{0}(\tau)} \, ds \right\Vert_{\mathcal{L}(Y_0)} \\
&\quad + \int_{\tau}^t \vert a_0(s) - a_{\epsilon}(s)\vert \Vert \tilde{L}_{0}(t, s) \hat{A} \, e^{-(s-\tau) \tilde{\mathcal{A}}_{0}(\tau)} \Vert_{\mathcal{L}(Y_0)} \, ds \\
&\leq \Vert a_{\epsilon} - a_0 \Vert_{L^{\infty}(\mathbb{R})} \int_{\tau}^t \Vert \tilde{L}_{0}(t, s) \hat{A} \, e^{-(s-\tau) \tilde{\mathcal{A}}_{0}(\tau)} \Vert_{\mathcal{L}(Y_0)} \, ds \\
&\quad + \int_{\tau}^t \Vert a_0 - a_{\epsilon} \Vert_{L^{\infty}(\mathbb{R})} \Vert \tilde{L}_{0}(t, s) \hat{A} \, e^{-(s-\tau) \tilde{\mathcal{A}}_{0}(\tau)} \Vert_{\mathcal{L}(Y_0)} \, ds \\
&= 2 \Vert a_{\epsilon} - a_0 \Vert_{L^{\infty}(\mathbb{R})} \int_{\tau}^t \Vert \tilde{L}_{0}(t, s) \hat{A} \, e^{-(s-\tau) \tilde{\mathcal{A}}_{0}(\tau)} \Vert_{\mathcal{L}(Y_0)} \, ds 
\xrightarrow{\epsilon \rightarrow 0^+} 0,
\end{split}
\]
which proves that 
\begin{equation}\label{integral A goes to zero}
\begin{split}
&\left\Vert \int_{\tau}^t \tilde{L}_{0}(t, s) [ \tilde{\mathcal{A}}_{\epsilon}(\tau) - \tilde{\mathcal{A}}_{\epsilon}(s) ] e^{-(s-\tau) \tilde{\mathcal{A}}_{0}(\tau)} \, ds \right.\\ 
&\hspace{3cm} \left.- \int_{\tau}^t \tilde{L}_{0}(t, s) [ \tilde{\mathcal{A}}_0(\tau) - \tilde{\mathcal{A}}_0(s) ] 
e^{-(s-\tau) \tilde{\mathcal{A}}_{0}(\tau)} \, ds \right\Vert_{\mathcal{L}(Y_0)} \xrightarrow{\epsilon\rightarrow 0^{+}} 0.
\end{split}
\end{equation}

Next, in order to deal with item (B), we write
\[
\begin{split}
&\Vert (\tilde{L}_{\epsilon}(t, s) - \tilde{L}_0(t, s) ) [ \tilde{\mathcal{A}}_{\epsilon}(\tau) - \tilde{\mathcal{A}}_{\epsilon}(s) ] e^{-(s-\tau) \tilde{\mathcal{A}}_0(\tau)} 
\Vert_{\mathcal{L}(Y_0)} \\
&\leq \Vert \tilde{L}_{\epsilon}(t, s) - \tilde{L}_0(t, s) \Vert_{\mathcal{L}(Y_0)} 
\Vert [ \tilde{\mathcal{A}}_{\epsilon}(\tau) - \tilde{\mathcal{A}}_{\epsilon}(s) ] 
\tilde{\mathcal{A}}_{\epsilon}^{-1}(\tau) \Vert_{\mathcal{L}(Y_0)} \\
&\hspace{5cm} \times \Vert \tilde{\mathcal{A}}_{\epsilon}(\tau) \tilde{\mathcal{A}}_0^{-1}(\tau) 
\Vert_{\mathcal{L}(Y_0)} \Vert \tilde{\mathcal{A}}_0(\tau) e^{-(s-\tau) \tilde{\mathcal{A}}_0(\tau)} 
\Vert_{\mathcal{L}(Y_0)}
\end{split}
\]
and then using the estimates
\[
\Vert \tilde{\mathcal{A}}_0(\tau) e^{-(s-\tau) \tilde{\mathcal{A}}_0(\tau)} 
\Vert_{\mathcal{L}(Y_0)} \leq \tilde{C} (s-\tau)^{-1}, \, s-\tau > 0, \, \tau\in\mathbb{R},
\]
for some constant $\tilde{C} > 0,$
\begin{equation}\label{boundness Ae A0}
\Vert \tilde{\mathcal{A}}_{\epsilon}(\tau) \tilde{\mathcal{A}}_0^{-1}(\tau) 
\Vert_{\mathcal{L}(Y_0)} \leq 
1 + \Vert a_{\epsilon} - a_0 \Vert_{L^{\infty}(\mathbb{R})}, \, \tau\in\mathbb{R},
\end{equation}
and Proposition \ref{uniform holder A til}, it follows that

\[
\begin{aligned}
&\left\Vert
\int_{\tau}^t (\tilde{L}_{\epsilon}(t, s) - \tilde{L}_0(t, s) ) [ \tilde{\mathcal{A}}_{\epsilon}(\tau) - \tilde{\mathcal{A}}_{\epsilon}(s) ] e^{-(s-\tau) \tilde{\mathcal{A}}_0(\tau)} \, ds 
\right\Vert_{\mathcal{L}(Y_0)} \\
&\leq \int_{\tau}^t 
\Vert 
(\tilde{L}_{\epsilon}(t, s) - \tilde{L}_0(t, s) ) [\tilde{\mathcal{A}}_{\epsilon}(\tau) - \tilde{\mathcal{A}}_{\epsilon}(s)] e^{-(s-\tau) \tilde{\mathcal{A}}_0(\tau)} 
\Vert_{\mathcal{L}(Y_0)} \, ds \\
&\leq \int_{\tau}^t 
\Vert \tilde{L}_{\epsilon}(t, s) - \tilde{L}_0(t, s) \Vert_{\mathcal{L}(Y_0)} 
\Vert [\tilde{\mathcal{A}}_{\epsilon}(\tau) - \tilde{\mathcal{A}}_{\epsilon}(s)] 
\tilde{\mathcal{A}}_{\epsilon}^{-1}(\tau) \Vert_{\mathcal{L}(Y_0)} \\
&
\hspace{5cm} \times
\Vert \tilde{\mathcal{A}}_{\epsilon}(\tau) \tilde{\mathcal{A}}_0^{-1}(\tau) 
\Vert_{\mathcal{L}(Y_0)} 
\Vert \tilde{\mathcal{A}}_0(\tau) e^{-(s-\tau) \tilde{\mathcal{A}}_0(\tau)} 
\Vert_{\mathcal{L}(Y_0)} \, ds \\
&\leq \int_{\tau}^t 
\Vert \tilde{L}_{\epsilon}(t, s) - \tilde{L}_0(t, s) \Vert_{\mathcal{L}(Y_0)} 
C\vert \tau - s\vert^{\beta} 
\left( 1 + \Vert a_{\epsilon} - a_0 \Vert_{L^{\infty}(\mathbb{R})} \right) 
\tilde{C} (s-\tau)^{-1} \, ds \\
\end{aligned}
\]

\begin{equation}\label{conv linear process 04}
\begin{aligned}
&= \tilde{\tilde{C}} 
\left( 1 + \Vert a_{\epsilon} - a_0 \Vert_{L^{\infty}(\mathbb{R})} \right) 
\int_{\tau}^t 
\Vert \tilde{L}_{\epsilon}(t, s) - \tilde{L}_0(t, s) \Vert_{\mathcal{L}(Y_0)} 
\vert \tau - s\vert^{\beta} 
(s-\tau)^{-1} \, ds \\
&= \tilde{\tilde{C}} 
\left( 1 + \Vert a_{\epsilon} - a_0 \Vert_{L^{\infty}(\mathbb{R})} \right) 
\int_{\tau}^t 
\vert \tau - s\vert^{\beta - 1} 
\Vert \tilde{L}_{\epsilon}(t, s) - \tilde{L}_0(t, s) \Vert_{\mathcal{L}(Y_0)} \, ds.
\end{aligned}
\end{equation}

Now, for the integral on item (C), we write
\begin{equation}\label{conv linear process 06}
\begin{split}
&\left\Vert \int_{\tau}^t \tilde{L}_{\epsilon}(t, s) [\tilde{\mathcal{A}}_{\epsilon}(\tau)
- \tilde{\mathcal{A}}_{\epsilon}(s)] (e^{-(s-\tau) \tilde{\mathcal{A}}_{\epsilon}(\tau)} - e^{-(s-\tau) \tilde{\mathcal{A}}_0(\tau)} ) \, ds \right\Vert_{\mathcal{L}(Y_0)} \\
&\leq \int_{\tau}^t \Vert \tilde{L}_{\epsilon}(t, s) [\tilde{\mathcal{A}}_{\epsilon}(\tau)
- \tilde{\mathcal{A}}_{\epsilon}(s)] (e^{-(s-\tau) \tilde{\mathcal{A}}_{\epsilon}(\tau)} - e^{-(s-\tau) \tilde{\mathcal{A}}_0(\tau)} ) \Vert_{\mathcal{L}(Y_0)} \, ds \\
&\leq \int_{\tau}^t \Vert \tilde{L}_{\epsilon}(t, s) \Vert_{\mathcal{L}(Y_0)} 
\Vert \tilde{\mathcal{A}}_{\epsilon}(\tau) 
- \tilde{\mathcal{A}}_{\epsilon}(s) \Vert_{\mathcal{L}(Y_1, Y_0)} 
\Vert e^{-(s-\tau) \tilde{\mathcal{A}}_{\epsilon}(\tau)} - e^{-(s-\tau) \tilde{\mathcal{A}}_0(\tau)} \Vert_{\mathcal{L}(Y_0, Y_1)} \, ds
\end{split}
\end{equation}

Now, according to \cite[Theorem 2.2]{Nascimento}, the process $\{\tilde{L}_{\epsilon}(t, \tau)\colon t\geq\tau\in\mathbb{R}\}$ satisfies
\[
\Vert \tilde{L}_{\epsilon}(t, \tau) \Vert_{\mathcal{L}(Y_0)} \leq M, \quad 
t\geq\tau\in\mathbb{R},
\]
with constant $M>0$ being independent of the time $t\in\mathbb{R}$ and also of the parameter $\epsilon$. With this, we can combine \eqref{conv linear process 06}, Lemma \ref{conv semigroups in Y0 Y1}, Proposition \ref{uniform holder A til}, and the Dominated Convergence Theorem in order to obtain
\[
\begin{split}
&\left\Vert \int_{\tau}^t \tilde{L}_{\epsilon}(t, s) [\tilde{\mathcal{A}}_{\epsilon}(\tau)
- \tilde{\mathcal{A}}_{\epsilon}(s)] (e^{-(s-\tau) \tilde{\mathcal{A}}_{\epsilon}(\tau)} - e^{-(s-\tau) \tilde{\mathcal{A}}_0(\tau)} ) \, ds \right\Vert_{\mathcal{L}(Y_0)} \\
&\leq \tilde{M} \int_{\tau}^t \vert\tau -s\vert^{\beta} 
\Vert e^{-(s-\tau) \tilde{\mathcal{A}}_{\epsilon}(\tau)} - 
e^{-(s-\tau) \tilde{\mathcal{A}}_0(\tau)} \Vert_{\mathcal{L}(Y_0, Y_1)} \, ds
\xrightarrow{\epsilon\rightarrow 0^{+}} 0,
\end{split}
\]
which yields
\begin{equation}\label{conv linear process 10}
\left\Vert \int_{\tau}^t \tilde{L}_{\epsilon}(t, s) 
[\tilde{\mathcal{A}}_{\epsilon}(\tau) 
- \tilde{\mathcal{A}}_{\epsilon}(s)] (e^{-(s-\tau) \tilde{\mathcal{A}}_{\epsilon}(\tau)} - e^{-(s-\tau) \tilde{\mathcal{A}}_0(\tau)} ) \, ds \right\Vert_{\mathcal{L}(Y_0)} 
\xrightarrow{\epsilon\rightarrow 0^{+}} 0.
\end{equation}

Finally, combining \eqref{conv linear process 01}, \eqref{conv linear process 03} and \eqref{conv linear process 04}, we can write
\[
\small
\begin{split}
&\Vert \tilde{L}_{\epsilon}(t, \tau) - \tilde{L}_0(t, \tau) \Vert_{\mathcal{L}(Y_0)} 
\leq \Vert e^{-(t-\tau) \tilde{\mathcal{A}}_{\epsilon}(\tau)} - e^{-(t-\tau) \tilde{\mathcal{A}}_0(\tau)} \Vert_{\mathcal{L}(Y_0)} \\
&+ \left\Vert 
\int_{\tau}^t \tilde{L}_0(t, s) [ \tilde{\mathcal{A}}_{\epsilon}(\tau) - \tilde{\mathcal{A}}_{\epsilon}(s) ] e^{-(s-\tau) \tilde{\mathcal{A}}_0(\tau)} \, ds 
- \int_{\tau}^t \tilde{L}_0(t, s) [ \tilde{\mathcal{A}}_0(\tau) - \tilde{\mathcal{A}}_0(s) ] e^{-(s-\tau) \tilde{\mathcal{A}}_0(\tau)} \, ds 
\right\Vert_{\mathcal{L}(Y_0)} \\
&+ \tilde{\tilde{C}} 
\left( 1 + \Vert a_{\epsilon} - a_0 \Vert_{L^{\infty}(\mathbb{R})} \right) 
\int_{\tau}^t 
\vert \tau - s\vert^{\beta - 1} 
\Vert \tilde{L}_{\epsilon}(t, s) - \tilde{L}_0(t, s) \Vert_{\mathcal{L}(Y_0)} \, ds \\
&+ \left\Vert 
\int_{\tau}^t \tilde{L}_{\epsilon}(t, s) [ \tilde{\mathcal{A}}_{\epsilon}(\tau) - \tilde{\mathcal{A}}_{\epsilon}(s)] (e^{-(s-\tau) \tilde{\mathcal{A}}_{\epsilon}(\tau)} - e^{-(s-\tau) \tilde{\mathcal{A}}_0(\tau)} ) \, ds 
\right\Vert_{\mathcal{L}(Y_0)}
\end{split}
\]
and then, using \eqref{conv linear process 02}, \eqref{integral A goes to zero}, \eqref{conv linear process 10} and Generalized Gronwall inequality (see \cite[Theorem 1.26]{Yagi}) we have
%{\color{blue}
%\[
%\begin{split}
%&\Vert \tilde{L}_{\epsilon}(t, \tau) - \tilde{L}_0(t, \tau) \Vert_{\mathcal{L}(Y_0)} 
%\leq E_{1, \beta} \left( \left[ \tilde{\tilde{C}} 
%\left( 1 + \Vert a_{\epsilon} - a_0 \Vert_{L^{\infty}(\mathbb{R})} \right) 
%\Gamma(\beta) \right]^{1\slash\beta} (t-\tau) \right) \\
%&\times \Vert e^{-(t-\tau) \tilde{\mathcal{A}}_{\epsilon}(\tau)} - e^{-(t-\tau) 
%\tilde{\mathcal{A}}_0(\tau)} \Vert_{\mathcal{L}(Y_0)} \\
%&\times \left\Vert 
%\int_{\tau}^t \tilde{L}_0(t, s) [ \tilde{\mathcal{A}}_{\epsilon}(\tau) - \tilde{\mathcal{A}}_{\epsilon}(s) ] e^{-(s-\tau) \tilde{\mathcal{A}}_0(\tau)} \, ds 
%- \int_{\tau}^t \tilde{L}_0(t, s) [ \tilde{\mathcal{A}}_0(\tau) - \tilde{\mathcal{A}}_0(s) ] e^{-(s-\tau) \tilde{\mathcal{A}}_0(\tau)} \, ds 
%\right\Vert_{\mathcal{L}(Y_0)} \\
%&\times \left\Vert 
%\int_{\tau}^t \tilde{L}_{\epsilon}(t, s) [ \tilde{\mathcal{A}}_{\epsilon}(\tau) - \tilde{\mathcal{A}}_{\epsilon}(s)] (e^{-(s-\tau) \tilde{\mathcal{A}}_{\epsilon}(\tau)} - e^{-(s-\tau) \tilde{\mathcal{A}}_0(\tau)} ) \, ds 
%\right\Vert_{\mathcal{L}(Y_0)} \\
%&\xrightarrow{\epsilon\rightarrow 0^{+}} 0.
%\end{split}
%\]
%}
\[
\Vert \tilde{L}_{\epsilon}(t, \tau) - \tilde{L}_0(t, \tau) \Vert_{\mathcal{L}(Y_0)}  \rightarrow 0, \quad \text{as } \epsilon\rightarrow 0^{+},
\]
which completes the proof.
\end{proof}

\begin{corollary}\label{all hyperbolic}
All the equilibrium points in $\mathcal{E}^{\ast}$ are hyperbolic for the family of processes 
$
\{ \tilde{L}_{\epsilon}(t, \tau) \colon t\geq \tau\in\mathbb{R}\},
$ 
for every $\epsilon\in [0, \epsilon_0],$ for some $\epsilon_0 >0$ sufficiently small.
\end{corollary}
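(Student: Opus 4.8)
The plan is to read hyperbolicity of an equilibrium $e_j^\ast$ for the process $\{\tilde{L}_\epsilon(t,\tau)\colon t\geq\tau\in\mathbb{R}\}$ as the assertion that this linear evolution process admits an exponential dichotomy (in the sense of Definition \ref{def hyperbolic sol}), and then to obtain persistence of this property under the perturbation parameter $\epsilon$ by a roughness argument. By hypothesis, every $e_j^\ast\in\mathcal{E}^\ast$ is hyperbolic for the limit problem, so $\{\tilde{L}_0(t,\tau)\}$ admits an exponential dichotomy with a family of projections $\{Q_0(t)\colon t\in\mathbb{R}\}$ and constants $K\geq 1$, $\gamma>0$ that are uniform in $t\in\mathbb{R}$. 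The goal is to produce $\epsilon_0>0$ such that $\{\tilde{L}_\epsilon(t,\tau)\}$ inherits an exponential dichotomy for every $\epsilon\in[0,\epsilon_0]$.

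First I would invoke the robustness (roughness) of exponential dichotomies, namely \cite[Theorem 7.6.10]{Henry} together with the framework of \cite{Nascimento}. This result guarantees that if a linear evolution process possesses an exponential dichotomy, then any second process sufficiently close to it --- measured in the operator norm of $\mathcal{L}(Y_0)$, uniformly over a fixed time-step and uniformly in the base time --- also admits an exponential dichotomy, with comparable constants and with projections close to the original ones. Thus the corollary reduces to producing such a uniform smallness estimate for $\tilde{L}_\epsilon-\tilde{L}_0$.

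The crucial point, and the main obstacle, is upgrading the conclusion of Proposition \ref{convergence processes linearization}, which asserts $\Vert \tilde{L}_\epsilon(t,\tau)-\tilde{L}_0(t,\tau)\Vert_{\mathcal{L}(Y_0)}\to 0$ only for each fixed pair $t\geq\tau$, into an estimate uniform in the base point $\tau\in\mathbb{R}$ over bounded time differences $t-\tau$. I would revisit the proof of Proposition \ref{convergence processes linearization} and observe that every constant appearing there is independent of $t$, $\tau$ and $\epsilon$: the semigroup bounds of Remark \ref{R1 first} and Lemma \ref{sec lemma 03}, the uniform H\"older estimate of Proposition \ref{uniform holder A til}, and the uniform bound $\Vert\tilde{L}_\epsilon(t,\tau)\Vert_{\mathcal{L}(Y_0)}\leq M$ from \cite[Theorem 2.2]{Nascimento}. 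Since the whole argument is driven by the quantity $\Vert a_\epsilon-a_0\Vert_{L^\infty(\mathbb{R})}$, which tends to $0$ by \eqref{norm function a}, the Generalized Gronwall step yields, for each fixed $T>0$,
\[
\sup_{\tau\in\mathbb{R}}\ \sup_{\tau\leq t\leq \tau+T}\ \Vert \tilde{L}_\epsilon(t,\tau)-\tilde{L}_0(t,\tau)\Vert_{\mathcal{L}(Y_0)}\xrightarrow{\epsilon\rightarrow 0^+}0.
\]

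Finally, I would apply the roughness theorem with $T=1$. Choosing $\epsilon_0>0$ small enough that the supremum above falls below the threshold dictated by the dichotomy constants $K$ and $\gamma$ of $\tilde{L}_0$, the theorem provides an exponential dichotomy for $\{\tilde{L}_\epsilon(t,\tau)\}$ for every $\epsilon\in[0,\epsilon_0]$. By definition this means each equilibrium in $\mathcal{E}^\ast$ is hyperbolic for the whole family $\{\tilde{L}_\epsilon(t,\tau)\}$, $\epsilon\in[0,\epsilon_0]$, which is the assertion. I expect the uniformity in $\tau$ to be the delicate ingredient, since it is exactly what is not made explicit in Proposition \ref{convergence processes linearization}; once it is in hand, the conclusion is a direct citation of the perturbation theorem.
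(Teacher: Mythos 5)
Your proposal follows essentially the same route as the paper's proof: hyperbolicity of the equilibria for the limit problem gives an exponential dichotomy for $\{\tilde{L}_0(t,\tau)\colon t\geq\tau\in\mathbb{R}\}$, the uniform bound $\Vert\tilde{L}_0(t,\tau)\Vert_{\mathcal{L}(Y_0)}\leq M$ from \cite[Theorem 2.2]{Nascimento} together with Proposition \ref{convergence processes linearization} feeds into the roughness theorem \cite[Theorem 7.6.10]{Henry}, and this yields dichotomies, hence hyperbolicity, for all $\epsilon\in[0,\epsilon_0]$. Your extra step of making the convergence of $\tilde{L}_{\epsilon}-\tilde{L}_0$ uniform in the base time $\tau$ over unit time steps is precisely the (implicit) reading the paper gives to Proposition \ref{convergence processes linearization} when invoking Henry's theorem, so the two arguments coincide, with yours merely spelling out that uniformity.
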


\begin{proof}
Indeed, by our assumptions, all equilibrium points in $\mathcal{E}^{\ast}$ are hyperbolic, in the sense of Definition \ref{def hyperbolic sol}, 
for the limit problem ($\epsilon = 0$) associated to the system \eqref{edp01}-\eqref{cond01}, which means that the process 
$
\{ \tilde{L}_0(t, \tau) \colon t\geq \tau\in\mathbb{R}\}
$ 
has an exponential dichotomy in $Y_0$, in the sense of Definition \ref{def exp dicho}. 
On the other hand, according to \cite[Theorem 2.2]{Nascimento}, we have 
\[
\Vert \tilde{L}_0(t, \tau) \Vert_{\mathcal{L}(Y_0)} \leq M, \quad 
t\geq\tau\in\mathbb{R},
\]
with $M>0$ being a constant. 
In particular, it follows that 
\[
\sup\{ \Vert \tilde{L}_0(t, \tau) \Vert_{\mathcal{L}(Y_0)} \colon 
0 \leq t - \tau \leq 1 \} < \infty.
\]

Now, with Proposition \ref{convergence processes linearization} in hands, and applying \cite[Theorem 7.6.10]{Henry}, it follows that, for some $\epsilon_0 >0$ sufficiently small, the process 
$
\{ \tilde{L}_{\epsilon}(t, \tau) \colon t\geq \tau\in\mathbb{R}\}
$ 
has an exponential dichotomy in $Y_0$, for every $\epsilon\in [0, \epsilon_0]$. 
Consequently, every point in the set of equilibria $\mathcal{E}^{\ast}$ is also hyperbolic for the process 
$
\{ \tilde{L}_{\epsilon}(t, \tau) \colon t\geq \tau\in\mathbb{R}\},
$ 
for every $\epsilon\in [0, \epsilon_0]$.
\end{proof}

\begin{theorem}\label{lower PA}
The family of pullback attractors 
$
\{ \mathbb{A}_{\epsilon}(t)\colon t\in\mathbb{R}\}
$ 
associated to the problem \eqref{edp01}-\eqref{cond01} is lower semicontinuous at $\epsilon = 0$, that is, for each $t\in\mathbb{R}$,
\[
{\rm d_H}(\mathbb{A}_0(t), \mathbb{A}_{\epsilon}(t)) \rightarrow 0 
\quad \mbox{as} \quad \epsilon\rightarrow 0^{+}.
\]
\end{theorem}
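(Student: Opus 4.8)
The plan is to obtain lower semicontinuity as a consequence of the abstract result collected in Appendix \ref{appendix B}, by verifying that all of its structural and continuity hypotheses have already been assembled in the preceding sections. The cornerstone is the gradient-like decomposition of Theorem \ref{structure limit PA}, namely $\mathbb{A}_0(t) = \bigcup_{i=1}^r W^u(e_i^{\ast})(t)$ for every $t \in \mathbb{R}$. Since the limit attractor is a \emph{finite} union of unstable manifolds around hyperbolic equilibria, it suffices to establish lower semicontinuity for each individual unstable manifold $W^u(e_i^{\ast})(t)$; the finite union then inherits the property, and combining over $1 \leq i \leq r$ yields ${\rm d_H}(\mathbb{A}_0(t), \mathbb{A}_{\epsilon}(t)) \to 0$ as $\epsilon \to 0^{+}$.

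First I would check the hypotheses of the abstract theorem one at a time. The hyperbolicity of every $e_i^{\ast} \in \mathcal{E}^{\ast}$ for the limit problem is our standing assumption \eqref{hyper condition}, and its \emph{persistence} under perturbation---the fact that each $e_i^{\ast}$ remains hyperbolic for the linearized process $\{\tilde{L}_{\epsilon}(t,\tau)\colon t \geq \tau \in \mathbb{R}\}$ for all sufficiently small $\epsilon$---is precisely Corollary \ref{all hyperbolic}. The continuity of the nonlinear evolution processes $S_{\epsilon}(t,\tau) \to S_0(t,\tau)$, uniformly on bounded sets, is Lemma \ref{Aux applic L 0}, while the convergence of the linearized processes $\tilde{L}_{\epsilon}(t,\tau) \to \tilde{L}_0(t,\tau)$ in $\mathcal{L}(Y_0)$ is Proposition \ref{convergence processes linearization}. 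The collective compactness of $\overline{\bigcup_{\epsilon \in [0,1]} \bigcup_{t \in \mathbb{R}} \mathbb{A}_{\epsilon}(t)}$ in $Y_0$ is supplied by \eqref{compactness limit set}, and the upper semicontinuity that accompanies it, needed to locate the perturbed equilibria near the unperturbed ones, is Theorem \ref{upper PA}.

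Fed with these ingredients, the abstract machinery produces the continuity of local unstable manifolds: for each $i$ there are perturbed equilibria $e_{i,\epsilon}^{\ast} \to e_i^{\ast}$ whose local unstable manifolds converge to that of $e_i^{\ast}$ as $\epsilon \to 0^{+}$. This step---realizing the perturbed unstable manifolds as graphs over the unstable subspace of the exponential dichotomy and proving their continuous dependence on $\epsilon$---is the technical heart of the argument, and it is exactly here that the persistence of exponential dichotomy from Corollary \ref{all hyperbolic}, together with the uniform sectorial estimates of Section \ref{linearized system}, become indispensable. I expect the main obstacle to be the promotion from \emph{local} to \emph{global} unstable manifolds: a point of $W^u(e_i^{\ast})(t)$ returns to a neighborhood of $e_i^{\ast}$ only after flowing backward in time for a long interval, so one must combine the backward continuity of the processes with the compactness \eqref{compactness limit set} to upgrade the local continuity statement to one valid on the whole section $W^u(e_i^{\ast})(t)$, thereby closing the argument.
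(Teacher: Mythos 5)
Your proposal is correct and follows essentially the same route as the paper: both proofs verify the hypotheses of the abstract result (Theorem \ref{lower paper Felipe}) using the gradient-like structure from Theorem \ref{structure limit PA}, the compactness \eqref{compactness limit set}, the process convergence of Lemma \ref{Aux applic L 0}, and the persistence of hyperbolicity from Corollary \ref{all hyperbolic}, and then invoke the abstract theorem. The only cosmetic differences are that the paper attributes the two bullet conditions of Theorem \ref{lower paper Felipe} (existence of nearby hyperbolic global solutions and continuity of their local unstable manifolds, including the local-to-global passage you flag as the main obstacle) directly to the cited stability results in \cite{Carvalho and Langa} and \cite{Carvalho Ergodic} rather than re-deriving them, and it does not need Theorem \ref{upper PA} for this step, since the perturbed hyperbolic solutions are located near the equilibria by those stability results rather than by upper semicontinuity of the attractors.
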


\begin{proof}
%The result follows by Theorem \ref{lower paper Felipe}. 
Applying Lemma \ref{Aux applic L 0} we have the uniform continuity, with respect to the parameter $\epsilon$, of the nonlinear evolution processes $\{S_{\epsilon}(t, \tau)\colon t\geq\tau\in\mathbb{R}\}$ associated to the problem \eqref{edp01}-\eqref{cond01}. 
The compactness of the closure of the union of pullback attractors is a direct consequence of the regularity result stated in Theorem \ref{regularity pullback attractor}. 
Moreover, in Theorem \ref{structure limit PA}, we obtained the gradient-like structure of the limit pullback attractor 
$\{\mathbb{A}_0(t)\colon t\in\mathbb{R}\}$ 
associated to the problem \eqref{edp01}-\eqref{cond01} with $\epsilon = 0$. 
It remains to verify the two conditions in Theorem \ref{lower paper Felipe}. But, as noted in \cite{Rivero1}, they are just consequences of the stability of the hyperbolic equilibria under perturbation, see also 
\cite[Theorem 6.1]{Carvalho and Langa} and \cite[Theorem 2.3]{Carvalho Ergodic}. 
Now, Corollary \ref{all hyperbolic} ensures that every equilibrium point in $\mathcal{E}^{\ast}$ is also hyperbolic with respect to the perturbed problem \eqref{edp01}-\eqref{cond01}, for every $\epsilon\in [0, \epsilon_0],$ for some $\epsilon_0 >0$ sufficiently small. Hence, we can apply Theorem \ref{lower paper Felipe}, which leads to the lower semicontinuity at $\epsilon =0$ of the family of pullback attractors 
$
\{ \mathbb{A}_{\epsilon}(t)\colon t\in\mathbb{R}\}
$ 
associated to the problem \eqref{edp01}-\eqref{cond01}.
\end{proof}

\begin{corollary}
The family of pullback attractors 
$
\{ \mathbb{A}_{\epsilon}(t)\colon t\in\mathbb{R}\},
$ 
associated to the non-autonomous problem \eqref{edp01}-\eqref{cond01}, is continuous at $\epsilon = 0$.
\end{corollary}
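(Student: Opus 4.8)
The plan is to obtain continuity as the immediate synthesis of the two semicontinuity statements already proved, since by definition a family of sets is continuous at a parameter value precisely when it is simultaneously upper and lower semicontinuous there. Concretely, for each fixed $t\in\mathbb{R}$, continuity of $\{\mathbb{A}_{\epsilon}(t)\colon t\in\mathbb{R}\}$ at $\epsilon = 0$ amounts to the vanishing of the full (symmetric) Hausdorff distance between $\mathbb{A}_{\epsilon}(t)$ and $\mathbb{A}_0(t)$ as $\epsilon\to 0^{+}$, which is equivalent to the simultaneous vanishing of both one-sided Hausdorff semidistances.

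First I would invoke Theorem \ref{upper PA}: under condition \eqref{norm function a}, the family is upper semicontinuous at $\epsilon = 0$, so that for each $t\in\mathbb{R}$ we have ${\rm d_H}(\mathbb{A}_{\epsilon}(t), \mathbb{A}_0(t)) \to 0$ as $\epsilon\to 0^{+}$. Next I would invoke Theorem \ref{lower PA}, which provides the complementary lower semicontinuity ${\rm d_H}(\mathbb{A}_0(t), \mathbb{A}_{\epsilon}(t)) \to 0$ as $\epsilon\to 0^{+}$, for each $t\in\mathbb{R}$. Since both semidistances tend to zero along $\epsilon\to 0^{+}$, the two-sided Hausdorff distance does as well, and this is exactly the assertion that the family $\{\mathbb{A}_{\epsilon}(t)\colon t\in\mathbb{R}\}$ is continuous at $\epsilon = 0$.

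There is, honestly, no genuine obstacle at this final step: the corollary is a purely formal consequence of the definition of continuity together with Theorems \ref{upper PA} and \ref{lower PA}. All of the substantive difficulty lies upstream---the gradient-like characterization of the limit pullback attractor in Theorem \ref{structure limit PA}, the hyperbolicity of every equilibrium in $\mathcal{E}^{\ast}$, and above all the persistence of the exponential dichotomy under the perturbation (Proposition \ref{convergence processes linearization} and Corollary \ref{all hyperbolic}), which together feed the lower semicontinuity of Theorem \ref{lower PA}. Having established those, the present statement only records that upper and lower semicontinuity together constitute continuity, thereby closing the study of robustness for problem \eqref{edp01}-\eqref{cond01}.
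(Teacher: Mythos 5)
Your proposal is correct and coincides with the paper's own proof: the corollary is obtained exactly by combining the upper semicontinuity of Theorem \ref{upper PA} with the lower semicontinuity of Theorem \ref{lower PA}, continuity being by definition the conjunction of the two. Your additional observation that all the substantive work lies upstream in Theorems \ref{structure limit PA} and \ref{lower PA} is accurate and consistent with the paper's treatment.
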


\begin{proof}
This result is a direct consequence of Theorems \ref{upper PA} and \ref{lower PA}.
\end{proof}

%\bigskip
%
%
%{\color{magenta}
%\hrulefill
%
%Let $\mu, \nu >0$ be two parameters and let $\Gamma(\xi), 0 < \xi < \infty,$ be the gamma function. We introduce the function $E_{\mu, \nu}$ defined by the following series
%\[
%E_{\mu, \nu}(t) = \sum_{n=0}^{\infty} 
%\frac{t^{n\nu}}{\Gamma(\mu + n\nu)} \quad \mbox{for} \quad 
%0\leq t < \infty,
%\]
%which has an estimate as follows (see \cite[Lemma 1.2]{Yagi})
%\[
%E_{\mu, \nu}(t) \leq \frac{2}{\nu\Gamma_0} (1+t)^{2-\mu} e^{t+1}, \quad 
%0\leq t < \infty,
%\]
%where $\Gamma_0 = \min\limits_{0 < \xi < \infty} \Gamma(\xi)$.
%
%
%\begin{theorem}\label{Generalized Gronwall} \rm \cite[Theorem 1.26]{Yagi} [Generalized inequality of Gronwall type] \it 
%Let $a\in C([0, T]; \R)$ be an increasing positive function, $b>0$ a constant, and $\nu >0$ an exponent. If $g\in C([0, T]; \R)$ satisfies the integral inequality 
%\[
%g(t) \leq a(t) + b\int_0^t (t-s)^{\nu - 1} g(s) \, ds, \quad t\in [0, T],
%\]
%then
%\[
%g(t) \leq a(t) E_{1, \nu} \left( [b\Gamma(\nu)]^{1\slash\nu} t \right), \quad t\in [0, T].
%\]
%\end{theorem}
%
%\hrulefill
%}

\begin{appendices}
\section{Appendix}\label{appendix A}

\subsection{Evolution processes and pullback attractors}

Let $(Z, d)$ be a metric space and $\N = \{1,2,\ldots\}$ be the set of all natural numbers.  An \textit{evolution process} acting in $Z$ is a two-parameter family $\{ S(t, \tau): t\geq\tau\in\mathbb{R} \}$ of maps from $Z$ into itself such that:
\begin{enumerate}
	\item[$\bullet$] $S(t, t) = I$ for all $t \in\mathbb{R}$ ($I$ is the identity operator in $Z$),
	\item[$\bullet$] $S(t, \tau) = S(t, s)S(s, \tau)$ for all $t\geq s\geq\tau$, and
	\item[$\bullet$] the map $\{ (t, \tau)\in \mathbb{R}^2: t\geq\tau \} \times Z \ni (t, \tau, x) \mapsto S(t, \tau)x \in Z$ is continuous.
\end{enumerate}

For a given set  $B \subset Z$, 
$S(t, \tau)B = \{S(t, \tau)x \colon  x \in B\}$
is the image of $B$ under $\{ S(t, \tau)\colon  t\geq\tau \in\mathbb{R}\}$. If $Z$ is a Banach space and $\{ S(t, \tau)\colon  t\geq\tau\in\mathbb{R} \} \subset \mathcal{L}(Z)$, we refer to this process as a \textit{linear evolution process}.

The {\it Hausdorff semidistance} between two nonempty subsets $A$ and $B$ of $Z$ is given by
$$
{\rm d_H}(A, B) = \sup\limits_{a\in A} \inf\limits_{b\in B} d(a, b)
$$
and the \textit{symmetric Hausdorff distance} is given by
\[
{\rm dist_H}(A, B) = \max \{ {\rm d_H}(A, B), {\rm d_H}(B, A) \}.
\]

\begin{definition} \rm
A family $\{ \mathbb{A}(t) \colon t\in\mathbb{R} \}$ of compact subsets of $Z$ is a \textit{pullback attractor} for an evolution process $\{ S(t, \tau)\colon  t\geq\tau \in\mathbb{R} \}$ if the following conditions hold:
\begin{enumerate}
\item[$(i)$] $\{ \mathbb{A}(t)\colon  t\in\mathbb{R} \}$ is invariant, that is, $S(t, \tau) \mathbb{A}(\tau) = \mathbb{A}(t)$ for all $t\geq\tau$,

\item[$(ii)$] $\{ \mathbb{A}(t)\colon  t\in\mathbb{R} \}$ pullback attracts bounded subsets of $Z$, that is,
\[
\lim\limits_{\tau \to -\infty} {\rm d_H} (S(t, \tau)B, \mathbb{A} (t)) = 0
\]
for every $t\in\mathbb{R}$ and every bounded subset $B$ of $Z$,  and

\item[$(iii)$] $\{ \mathbb{A}(t)\colon  t\in\mathbb{R} \}$ is the minimal family of closed sets satisfying property $(ii)$.
\end{enumerate}
\end{definition}

A general result concerning the existence of pullback attractors states that, if an evolution process is pullback strongly bounded dissipative and pullback asymptotically compact, then it has a pullback attractor (see \cite[Theorem 2.23]{Livro Alexandre}). The theory of pullback attractors for nonlinear evolution processes is covered with details in \cite{Rivero2}, \cite{Livro Alexandre} and \cite{Chepyzhov and Vishik}.

\begin{definition} \rm
A \textit{global solution} for an evolution process $\{ S(t, \tau)\colon  t\geq\tau \in\mathbb{R} \}$ in $Z$ is a function $\xi\colon\mathbb{R}\to Z$ such that $S(t, \tau) \xi(\tau) = \xi(t)$ for all $t\geq\tau$.
\end{definition}

\begin{definition} \rm
An \textit{equilibrium point} $\xi^{\ast}\in Z$ for an evolution process $\{ S(t, \tau)\colon  t\geq\tau \in\mathbb{R} \}$ in $Z$ is a time independent global solution, that is,   $S(t, \tau)\xi^{\ast} = \xi^{\ast}$ for all $t\geq\tau$.
\end{definition}

\begin{definition}\rm 
Let $\{A_{\lambda}\}_{\lambda\in\Lambda}$ be a family of subsets of $Z$ indexed on a metric space $\Lambda$. The family 
$\{A_{\lambda}\}_{\lambda\in\Lambda}$ is said to be:
\begin{enumerate}
\item[$(i)$] \textit{upper semicontinuous as} $\lambda\rightarrow\lambda_0$ (or at $\lambda_0$) if $\lim\limits_{\lambda\rightarrow\lambda_0} {\rm d_H}(A_{\lambda}, A_{\lambda_0}) = 0$;

\item[$(ii)$] \textit{lower semicontinuous as} $\lambda\rightarrow\lambda_0$ (or at $\lambda_0$) if $\lim\limits_{\lambda\rightarrow\lambda_0} {\rm d_H}(A_{\lambda_0}, A_{\lambda}) = 0$;

\item[$(iii)$] \textit{continuous as} $\lambda\rightarrow\lambda_0$ (or at $\lambda_0$)  if it is both upper and lower semicontinuous as $\lambda\rightarrow\lambda_0$.
\end{enumerate}
\end{definition}

In the next two definitions, we consider a linear evolution process $\{L(t, \tau)\colon t\geq\tau\in\mathbb{R}\}$ acting on a Banach space $X$.

\begin{definition}\label{def exp dichotomy} \rm
A linear evolution process $\{L(t, \tau)\colon t\geq\tau\in\mathbb{R}\}$ in a Banach space $X$ admits  an \textit{exponential dichotomy} with projection 
$\{P(t)\colon t\in\mathbb{R}\} \subset \mathcal{L}(X)$, exponent $\omega > 0$ and constant $M \geq 1$, if:
\begin{enumerate}
\item[$(i)$] $P(t)L(t, \tau) = L(t, \tau)P(\tau)$ for all $t\geq\tau$;

\item[$(ii)$] the restriction of $L(t, \tau)$ to $R(P(\tau))$ is an isomorphism from $R(P(\tau))$ onto $R(P(t))$, where $R(P(t))$ denotes the range of the operator $P(t)$; and

\item[$(iii)$]  $\Vert L(t, \tau)(I - P(\tau)) \Vert_{\mathcal{L}(X)} \leq 
M e^{-\omega (t-\tau)}, \quad t\geq \tau,$

\noindent $\Vert L(t, \tau)P(\tau) \Vert_{\mathcal{L}(X)} \leq 
M e^{\omega (t-\tau)}, \quad t\leq \tau$.

%that is, $\{P(t)\colon t\in\mathbb{R}\}$ splits $X$ into stable and unstable subspaces, that is, there exist constants $\omega >0$ and $M\geq 1$ such that

\end{enumerate}
\end{definition}

\begin{remark} \rm In Definition \ref{def exp dichotomy}, item $(ii)$, we denote the inverse of $L(t, \tau)$ by 
$L(\tau, t)$.
\end{remark}

\begin{definition} \rm
A global solution $\xi\colon\mathbb{R}\to X$ for a linear evolution process 
$\{L(t, \tau)\colon t\geq\tau\in\mathbb{R}\}$ in a Banach space $X$ is said to be 
\textit{hyperbolic}, if $\{L(t, \tau)\colon t\geq\tau\in\mathbb{R}\}$ has an exponential dichotomy in the sense of Definition \ref{def exp dichotomy}.
\end{definition}

\end{appendices}

\begin{appendices}
\section{Appendix}\label{appendix B}

\subsection{An abstract result on lower semicontinuity of pullback attractors}

Let $X$ be a Banach space,  $\mathcal{B}\colon D(\mathcal{B}) \subset X\to X$ be the generator of a strongly continuous semigroup 
$
\{e^{\mathcal{B}t} \colon t\geq 0\} \subset \mathcal{L}(X),
$ $y_0\in X$ 
and consider the following abstract semilinear problem
\begin{equation}\label{semi p}
\begin{cases}
y_t = \mathcal{B}y + f_0(t, y), & t > \tau, \\
y(\tau) = y_0, & \tau \in \mathbb{R},
\end{cases}
\end{equation}
and its associated regular perturbation, given by
\begin{equation}\label{regular perturbation}
\begin{cases}
y_t = \mathcal{B}y + f_{\epsilon}(t, y), & t > \tau, \quad \epsilon\in [0, 1], \\
y(\tau) = y_0, & \tau \in \mathbb{R}.
\end{cases}
\end{equation}

If we assume that, for each $\epsilon\in [0, 1],$ the nonlinearity $f_{\epsilon} \colon \mathbb{R} \times X\to X$ is continuous and Lipschitz continuous in the second variable, uniformly on bounded subsets of $X$, then the problems \eqref{semi p} and \eqref{regular perturbation} are locally well-posed. Assuming that, for each $\tau \in \mathbb{R}$ and $y_0\in X$, the solution $t \mapsto S_{\epsilon}(t, \tau)y_0$ of \eqref{regular perturbation} is defined in $[\tau, \infty),$ we write
\[
S_{\epsilon}(t, \tau)y_0 = e^{\mathcal{B}(t-\tau)} y_0 + 
\int_{\tau}^t e^{\mathcal{B}(t-s)} f_{\epsilon}(s, S_{\epsilon}(s, \tau)y_0) \, ds.
\]
The family $\{S_{\epsilon}(t, \tau)\colon t\geq\tau\in\mathbb{R}\}, \epsilon\in [0, 1],$ consists of nonlinear evolution processes.

If $\xi_{\epsilon}^{\ast} \colon\mathbb{R}\to X, \epsilon\in [0, 1],$ is a global solution of \eqref{regular perturbation}, then we consider the linearization of \eqref{regular perturbation} around $\xi_{\epsilon}^{\ast}$ given by
\begin{equation}\label{abstract linearization}
\begin{cases}
z_t = \mathcal{B}z + (f_{\epsilon})_z(t, \xi_{\epsilon}^{\ast}(t))z, \\
z(\tau) = z_0,
\end{cases}
\end{equation}
and the evolution process $\{U_{\epsilon}(t, \tau)\colon t\geq\tau\in\mathbb{R}\}, \epsilon\in [0, 1],$ given by
\[
U_{\epsilon}(t, \tau)z_0 = e^{\mathcal{B}(t-\tau)}z_0 + 
\int_{\tau}^t e^{\mathcal{B}(t-s)} (f_{\epsilon})_z (s, \xi_{\epsilon}^{\ast}(s)) 
U_{\epsilon}(s, \tau)z_0 \, ds.
\]

\begin{definition}\label{def exp dicho} \rm
The linearized problem \eqref{abstract linearization} admits an 
\textit{exponential dichotomy} with exponent $\omega > 0$ and constant $M \geq 1$, if there exists a family of projections 
$
\{ \mathcal{Q}_{\epsilon}(t) \colon t\in\mathbb{R}\} \subset \mathcal{L}(X)
$ 
such that:
\begin{enumerate}
\item[$(i)$] $\mathcal{Q}_{\epsilon}(t) U_{\epsilon}(t, \tau) = U_{\epsilon}(t, \tau) \mathcal{Q}_{\epsilon}(\tau)$ for all $t\geq\tau$;

\item[$(ii)$] $U_{\epsilon}(t, \tau) \colon R(\mathcal{Q}_{\epsilon}(\tau)) \to R(\mathcal{Q}_{\epsilon}(t))$ is an isomorphism, with inverse 
\[
U_{\epsilon}(\tau, t) \colon R(\mathcal{Q}_{\epsilon}(t)) \to 
R(\mathcal{Q}_{\epsilon}(\tau)) 
\quad \mbox{for all} \quad t\geq\tau;
\]

\item[$(iii)$]  $\Vert U_{\epsilon}(t, \tau) (I - \mathcal{Q}_{\epsilon}(\tau)) 
\Vert_{\mathcal{L}(X)} \leq  M e^{-\omega (t-\tau)}, \quad \mbox{for} \quad t\geq \tau,$

\noindent $\Vert U_{\epsilon}(t, \tau) \mathcal{Q}_{\epsilon}(\tau) 
\Vert_{\mathcal{L}(X)} \leq 
M e^{\omega (t-\tau)}, \quad \mbox{for} \quad t\leq \tau.$
\end{enumerate}
\end{definition}

\begin{definition}\label{def hyperbolic sol} \rm
A global solution $\xi_{\epsilon}^{\ast} \colon\mathbb{R}\to X, \epsilon\in [0, 1],$ of \eqref{regular perturbation} is said to be \textit{hyperbolic} if the evolution process 
$
\{U_{\epsilon}(t, \tau)\colon t\geq\tau\in\mathbb{R}\}
$
associated to \eqref{abstract linearization} has an exponential dichotomy in the sense of Definition \ref{def exp dicho}.
\end{definition}

The following result concerns the lower semicontinuity of a family of pullback attractors and it was stated in \cite[Theorem 6.2]{Rivero1}. 
We also refer to \cite[Theorem 3.1]{Carvalho Ergodic} for a more general result.

\begin{theorem}\label{lower paper Felipe}
Let $\{S_{\epsilon}(t, \tau)\colon t\geq\tau\in\mathbb{R}\}, \epsilon\in [0, 1],$ be the family of nonlinear evolution processes associated to \eqref{regular perturbation} in $X$ and assume that, for $[\tau, t] \subset \mathbb{R},$
\[
\Vert S_{\epsilon}(t, \tau)x - S_0(t, \tau)x \Vert_X \rightarrow 0 
\quad \mbox{as} \quad \epsilon\rightarrow 0^{+},
\]
for any $x$ in a compact subset of $X$. Suppose that, for each $\epsilon\in [0, 1],$ the process $\{S_{\epsilon}(t, \tau)\colon t\geq\tau\in\mathbb{R}\}$ has a pullback attractor $\{\mathbb{A}_{\epsilon}(t) \colon t\in\mathbb{R}\}$ in $X$ such that, for $\epsilon_0 >0$ sufficiently small,
\[
\overline{
\bigcup\limits_{t\in\mathbb{R}} \bigcup\limits_{\epsilon\in [0, \epsilon_0]} 
\mathbb{A}_{\epsilon}(t)} 
\mbox{  is compact in  } X.
\]
Moreover, assume that there exists a finite collection $\mathcal{E}^{\ast} = \{e_1^{\ast}, \ldots, e_r^{\ast}\}$ of equilibrium points, all of them hyperbolic for the limit problem \eqref{semi p}, such that the limit pullback attractor $\{\mathbb{A}_0(t) \colon t\in\mathbb{R}\}$ is the union of their unstable manifolds, that is,
\[
\mathbb{A}_0(t) = \bigcup_{i = 1}^r W^u(e_i^{\ast})(t) \quad \mbox{for all} \quad 
t\in\mathbb{R},
\]
where $W^u(e_i^{\ast})(t) = \{\zeta \in X: \text{ there is a global solution } \xi\colon \R \to X  \text{ such that } \xi(t) = \zeta  \text{ and } \displaystyle\lim_{\tau \to -\infty} dist(\xi(\tau), e_i^{\ast}) =0\}$. Further, assume that for each $e_j^{\ast} \in\mathcal{E}^{\ast}$:
\begin{enumerate}
\item[$\bullet$] given $\delta >0$, there exists $\epsilon_{j,\delta} >0$ such that, for all $0 < \epsilon < \epsilon_{j,\delta}$ there is a global hyperbolic solution $\xi_{j,\epsilon}$ of \eqref{regular perturbation} that satisfies
\[
\sup\limits_{t\in\mathbb{R}} \Vert \xi_{j,\epsilon}(t) - e_j^{\ast} \Vert_X < \delta,
\]

\item[$\bullet$] the local unstable manifold of $\xi_{j,\epsilon}$ behaves continuously as $\epsilon\rightarrow 0^{+}$, that is,
\[
{\rm dist_H}\left( W_{0, {\rm loc}}^u (e_j^{\ast}), 
W_{\epsilon, {\rm loc}}^u (\xi_{j,\epsilon}) \right) \rightarrow 0 
\quad \mbox{as} \quad \epsilon\rightarrow 0^{+},
\]
\end{enumerate}
where ${\rm dist_H}(\cdot, \cdot)$ is the symmetric Hausdorff distance and $W_{{\rm loc}}^u (\cdot) = W^u(\cdot) \cap B_X(\cdot, \rho),$ with $\rho >0$.

Then the family 
$
\{\mathbb{A}_{\epsilon}(t) \colon t\in\mathbb{R}, \epsilon\in [0, \epsilon_0] \}
$ 
is lower semicontinuous at $\epsilon = 0$, that is,
\[
{\rm d_H}(\mathbb{A}_0(t), \mathbb{A}_{\epsilon}(t)) \rightarrow 0 
\quad \mbox{as} \quad \epsilon\rightarrow 0^{+}.
\]
\end{theorem}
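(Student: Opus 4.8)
The plan is to prove pointwise lower semicontinuity at each $z^{\ast}\in\mathbb{A}_0(t)$ and then upgrade it to the Hausdorff-semidistance statement by a compactness-and-contradiction argument. So first I would assume, for contradiction, that ${\rm d_H}(\mathbb{A}_0(t),\mathbb{A}_{\epsilon}(t))\not\to 0$ as $\epsilon\to 0^{+}$: there exist $\eta>0$, a sequence $\epsilon_n\to 0^{+}$ and points $z_n\in\mathbb{A}_0(t)$ with ${\rm dist}(z_n,\mathbb{A}_{\epsilon_n}(t))\geq\eta$. Since $\mathbb{A}_0(t)$ is compact, I may pass to a subsequence with $z_n\to z^{\ast}\in\mathbb{A}_0(t)$. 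The whole difficulty then reduces to producing a sequence $w_n\in\mathbb{A}_{\epsilon_n}(t)$ with $w_n\to z^{\ast}$, which immediately contradicts the lower bound through ${\rm dist}(z_n,\mathbb{A}_{\epsilon_n}(t))\leq\|z_n-w_n\|_X\leq\|z_n-z^{\ast}\|_X+\|z^{\ast}-w_n\|_X$.

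To construct $w_n$, I would exploit the gradient-like structure $\mathbb{A}_0(t)=\bigcup_{i=1}^{r}W^u(e_i^{\ast})(t)$. Fix an index $i$ with $z^{\ast}\in W^u(e_i^{\ast})(t)$; by definition there is a global solution $\xi\colon\R\to X$ of the limit problem \eqref{semi p} with $\xi(t)=z^{\ast}$ and ${\rm dist}(\xi(\tau),e_i^{\ast})\to 0$ as $\tau\to-\infty$. Choosing $\tau_0\ll 0$, I can guarantee $\xi(\tau_0)\in B_X(e_i^{\ast},\rho)$, so that $\xi(\tau_0)\in W_{0,{\rm loc}}^u(e_i^{\ast})$. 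The hypothesis ${\rm dist_H}\!\left(W_{0,{\rm loc}}^u(e_i^{\ast}),W_{\epsilon,{\rm loc}}^u(\xi_{i,\epsilon})\right)\to 0$ then supplies, for each small $\epsilon$, a point $\zeta_{\epsilon}\in W_{\epsilon,{\rm loc}}^u(\xi_{i,\epsilon})$ with $\|\zeta_{\epsilon}-\xi(\tau_0)\|_X\to 0$. Because $\zeta_{\epsilon}$ lies on the local unstable manifold of the bounded hyperbolic solution $\xi_{i,\epsilon}$, it sits on a bounded global solution of \eqref{regular perturbation}, which is therefore contained in the pullback attractor; hence $\zeta_{\epsilon}\in\mathbb{A}_{\epsilon}(\tau_0)$ and, by invariance, $w_{\epsilon}:=S_{\epsilon}(t,\tau_0)\zeta_{\epsilon}\in\mathbb{A}_{\epsilon}(t)$.

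Finally I would show $w_{\epsilon_n}\to z^{\ast}$. Writing $z^{\ast}=\xi(t)=S_0(t,\tau_0)\xi(\tau_0)$ by invariance of the limit process, I split
\[
\|w_{\epsilon}-z^{\ast}\|_X\leq\|S_{\epsilon}(t,\tau_0)\zeta_{\epsilon}-S_0(t,\tau_0)\zeta_{\epsilon}\|_X+\|S_0(t,\tau_0)\zeta_{\epsilon}-S_0(t,\tau_0)\xi(\tau_0)\|_X.
\]
All the points $\zeta_{\epsilon}$, together with $\xi(\tau_0)$, lie in the compact set $\overline{\bigcup_{t\in\R}\bigcup_{\epsilon\in[0,\epsilon_0]}\mathbb{A}_{\epsilon}(t)}$, so the first term tends to $0$ by the uniform-on-compacts convergence $S_{\epsilon}\to S_0$ assumed in the theorem, while the second tends to $0$ by continuity of the single map $S_0(t,\tau_0)$ together with $\zeta_{\epsilon}\to\xi(\tau_0)$. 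This yields $w_{\epsilon_n}\to z^{\ast}$ and closes the contradiction.

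The hard part is the passage from the local to the global statement: the lower-semicontinuity input is available only for the \emph{local} unstable manifolds near the equilibria, and one must transport it forward over the finite window $[\tau_0,t]$ without losing control. The two ingredients that make this work are (a) the uniform compactness of $\overline{\bigcup_{t\in\R}\bigcup_{\epsilon}\mathbb{A}_{\epsilon}(t)}$, which confines every $\zeta_{\epsilon}$ to one fixed compact set so that the hypothesis that $S_{\epsilon}(t,\tau)x\to S_0(t,\tau)x$ uniformly for $x$ in compact sets applies, and (b) the fact that the perturbed local unstable manifold is genuinely contained in $\mathbb{A}_{\epsilon}$, which is what lets the forward image $w_{\epsilon}$ land in the perturbed attractor. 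A secondary subtlety is that $\tau_0$ must be chosen first and then kept fixed while sending $\epsilon\to 0^{+}$; since $r$ is finite and each $z^{\ast}$ belongs to at least one $W^u(e_i^{\ast})(t)$, no diagonalization over infinitely many equilibria is required.
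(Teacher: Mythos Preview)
The paper does not prove this theorem; it is quoted from \cite[Theorem 6.2]{Rivero1} (see also \cite[Theorem 3.1]{Carvalho Ergodic}) and stated in Appendix~\ref{appendix B} as an abstract input to be applied in Theorem~\ref{lower PA}. Your argument is correct and is precisely the standard proof found in those references: reduce to pointwise lower semicontinuity by compactness of $\mathbb{A}_0(t)$, trace the limit point $z^{\ast}$ backward along its global solution into a local unstable manifold $W^u_{0,\mathrm{loc}}(e_i^{\ast})$, use the assumed $\mathrm{dist_H}$-continuity of the local unstable manifolds to find a nearby point on $W^u_{\epsilon,\mathrm{loc}}(\xi_{i,\epsilon})\subset\mathbb{A}_{\epsilon}(\tau_0)$, and push forward by $S_{\epsilon}(t,\tau_0)$ using the uniform-on-compacts convergence $S_{\epsilon}\to S_0$.

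One small point worth tightening: when you write that $\zeta_{\epsilon}$ ``sits on a bounded global solution'' and hence lies in $\mathbb{A}_{\epsilon}(\tau_0)$, the forward boundedness is not immediate from the unstable-manifold definition alone. The clean justification is that the backward orbit through $\zeta_{\epsilon}$ is bounded (it converges to the bounded solution $\xi_{i,\epsilon}$), so pullback attraction of that bounded set by $\{\mathbb{A}_{\epsilon}(s)\}$ forces $\zeta_{\epsilon}\in\mathbb{A}_{\epsilon}(\tau_0)$; forward boundedness then follows \emph{a posteriori} from invariance and the compactness of $\overline{\bigcup_{t}\bigcup_{\epsilon}\mathbb{A}_{\epsilon}(t)}$. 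With this clarification the proof is complete and matches the literature argument the paper is citing.
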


\end{appendices}

\bibliographystyle{amsplain}
\addcontentsline{toc}{chapter}{Referências Bibliográficas}

\end{document}